            \DeclareSymbolFont{usualmathcal}{OMS}{cmsy}{m}{n}
            \DeclareSymbolFontAlphabet{\mathcalbf}{usualmathcal}
            \DeclareSymbolFont{usualmathcal}{OMS}{cmsy}{m}{n}
            \DeclareSymbolFontAlphabet{\mathcalbf}{usualmathcal}
            \providecommand{\diracdelta}[1][]{\ensuremath{\deltaup_{#1}}}
            \providecommand{\lap}{\ensuremath{\Deltaup}}
            \providecommand{\pic}{\ensuremath{\mathrm\pi}}
            \providecommand{\measure}[1]{\ensuremath{\mathcalbf{\uppercase{#1}}}}
            \providecommand{\mathcalbf}{\mathcal}
      \def\olprovideenvironment{\@star@or@long\provide@environment}
      \def\provide@environment#1{%
              \@ifundefined{#1}%
                      {\def\reserved@a{\newenvironment{#1}}}%
                      {\def\reserved@a{\renewenvironment{dummy@environ}}}%
              \reserved@a
      }
      \def\dummy@environ{}
      \colorlet{a}{magenta}
      \colorlet{b}{green!75!blue}
      \colorlet{c}{yellow!87.5!red}
      \colorlet{d}{cyan}
      \colorlet{e}{red}
      \colorlet{f}{blue}
      \colorlet{g}{white}
      \colorlet{i}{black}
      \colorlet{h}{i!50!g}
      \colorlet{j}{a!75!g}
        \providecommand{\linkedurl}[1]{\url{1}}%
        \providecommand{\linkedemail}[1]{\href{mailto:#1}{#1}}%
        \providecommand{\linkedurl}[1]{\texttt{#1}}%
        \providecommand{\linkedemail}[1]{\texttt{#1}}%
      \providecommand{\email}[1]{{\linkedemail{#1}}}
      \providecommand{\Ignore}[1]{}
      \providecommand{\ignore}[1]{}
      \providecommand{\freeze}[1]{}%
      \providecommand{\crossout}[1]{{\color{i!20} #1}}
      \providecommand{\highlightcolor}{a}
      \providecommand{\highlight}[1]{{\color{\highlightcolor}#1}}
      \providecommand{\memo}[1]{%
        \ensuremath{%
          \framebox{\tiny\textbf{\kern-2pt\textsf{#1}}\kern-2pt}%
        }%
        \xspace}
      \newcounter{margnote}[page]
      \providecommand{\mgcolor}{a}%
      \providecommand{\mgcolorset}[1]{\renewcommand{\mgcolor}{\alphalph{#1}}}
      \providecommand{\mgcolorsetbycounter}[1]{%
        \newcount\@olmodn
        \@olmodn \value{#1}\relax
        \newcount\@olmodd
        \@olmodd 6\relax
        \newcount\@olmodq %
        \newcount\@olmodr %
        \newcount\@olmodc %
        \@olmodc\@olmodd\relax
        \multiply\@olmodc by 2\relax
        \advance\@olmodc-1\relax
        \@olmodq 0\relax
        \@olmodr\@olmodn\relax
        \ifnum\@olmodr>\@olmodc
        \loop
        \advance\@olmodr-\@olmodd\relax
        \advance\@olmodq1\relax
        \ifnum\@olmodr>\@olmodc
        \repeat
        \fi
        \setcounter{tmpcounter}{\the\@olmodr}
        \stepcounter{tmpcounter}
        \mgcolorset{\value{tmpcounter}}
      }
      \providecommand{\mgcolormake}{\mgcolorsetbycounter{margnote}}
      \providecommand{\margnotecolor}{\mgcolormake}
      \providecommand{\margnotemark}{{\colorbox{\mgcolor}{\tiny\color{g}\upshape\texttt{\arabic{page}.\arabic{margnote}}}}\,}
      \providecommand{\margnote}[2][]{%
        \ifthenelse{%
          \boolean{shownotes}%
        }{%
          \stepcounter{margnote}%
          \margnotecolor%
          \margnotemark %
          \marginpar{%
            \color{\mgcolor}%
            \texttt{\bfseries{%
              \begin{minipage}{2cm}%
                \raggedright\tiny%
                \margnotemark%
                #2%
                \\
                {\ifx|#1|{}\else{ - #1}\fi}%
              \end{minipage}%
              }%
            }%
          }%
        }{%
        }%
      }%
      \providecommand{\mathnote}[2][]{%
        \ifthenelse{%
          \boolean{shownotes}%
        }{%
          \stepcounter{margnote}%
          \margnotecolor%
          \text{%
            \colorbox{g}{%
              \color{\mgcolor}%
              \texttt{\bfseries{%
                \tiny%
                    \margnotemark\,%
                    #2%
                    \ifx|#1|{}\else{ - #1}\fi%
                }%
              }%
            }%
          }%
        }{%
        }%
      }%
      \providecommand{\textnote}[2][]{%
        \ifthenelse{%
          \boolean{shownotes}%
        }{%
          \stepcounter{margnote}%
          \margnotecolor%
          \ \\
          \text{%
              \begin{minipage}{\textwidth}
              \color{\mgcolor}%
              \texttt{%
                \margnotemark%
                #2%
                \ifx|#1|{}\else{ - #1}\fi%
              }%
              \end{minipage}
          }%
        }{}%
      }%
      \providecommand{\Todo}[2][To do:]{
        \ifthenelse{\boolean{shownotes}}{
          \begin{tikzpicture}
           \node[fill=a!17]{
             \begin{minipage}{\textwidth}
               \tiny
               \texttt{#1}
               \texttt{\bfseries{#2}}
             \end{minipage}
           };
          \end{tikzpicture}
        }{}}
      \newcommand{\revisionsheader}{\ \clearpage\Warning{the following part is under development/revision}}
      \newcommand{\revisionsfooter}{\ \newline\Warning{end of part under development/revision}\clearpage}
      \providecommand{\HighlightBox}[2][a!6.25]{
        \begin{center}
          \begin{tikzpicture}
            \node[fill=#1]{
              \begin{minipage}{\textwidth}
                #2
              \end{minipage}
            };
          \end{tikzpicture}
        \end{center}
      }
      \providecommand{\Warning}[1]{    
        \HighlightBox[b!25]{%
          \texttt{\bfseries{\small Warning: #1}}
        }
      }
      \providecommand{\margincomment}[1]{
      \ifthenelse{\boolean{showcomments}}{\marginpar{\tiny #1}}{}
      }
      \providecommand{\changes}[2][]{%
        \ifthenelse{\boolean{showchanges}}{{%
            \ifx|#1|{}\else\margnote{#1}\fi%
            \highlight{#2}%
        }}{%
          #2}}
      \providecommand{\mathchanges}[2][]{%
        \ifthenelse{\boolean{showchanges}}{{\ifx|#1|{}\else\mathnote{#1}\fi\highlight{#2}}}{#2}}
      \providecommand{\changefromto}[3][replace with]{%
        \ifthenelse{\boolean{showchanges}}{{%
            \crossout{#2}\margnote{#1}%
          }{%
            \highlight{#3}
          }%
        }{%
          #3\xspace%
        }%
      }
      \providecommand{\ChangePar}[3][]{%
        \ifthenelse{\boolean{showchanges}}{
          {\par\textcolor{i!20}{#2}\ifx|#1|\else\margnote{#1}\fi}{\par\textcolor{a}{#3}}
        }{%
          \par #3%
        }%
      }
      \providecommand{\InsertPar}[1]{
        \ifthenelse{\boolean{showchanges}}
        {{\par$\mapsto$ \textcolor{blue}{#1}}}
        {\par #1}
      }
      \providecommand{\mathchangefromto}[3][]{\crossout{#2}\ifx|#1|\else\mathnote{#1}\fi\highlight{#3}}
      \let\trueMakeUppercase\MakeUppercase
      \newcommand{\UCmath}[1]{%
        \begingroup
        \ucmathlist\trueMakeUppercase{#1}%
        \endgroup
      }
        \newcommand{\ucmathlist}{%
          \def\alpha{A}%
          \def\beta{B}%
          \let\gamma\Gamma
          \let\delta\Delta
          \def\epsilon{E}%
          \def\varepsilon{E}%
          \def\zeta{Z}%
          \def\eta{H}%
          \let\theta\Theta
          \let\vartheta\Theta
          \def\iota{I}%
          \def\kappa{K}%
          \let\lambda\Lambda
          \def\mu{M}%
          \def\nu{N}%
          \let\xi\Xi
          \def\omicron{O}
          \let\pi\Pi
          \let\varpi\Pi
          \def\rho{P}%
          \def\varrho{P}%
          \let\sigma\Sigma
          \def\varsigma{C}
          \def\tau{T}%
          \let\upsilon\Upsilon
          \let\phi\Phi
          \let\varphi\Phi
          \def\chi{X}%
          \let\psi\Psi
          \let\omega\Omega
      }}{
        \newcommand{\ucmathlist}{
          \def\alpha{\mathrm{A}}%
          \def\beta{\mathrm{B}}%
          \let\gamma\Gamma
          \let\delta\Delta
          \def\epsilon{\mathrm{E}}%
          \def\varepsilon{\mathrm{E}}%
          \def\zeta{\mathrm{Z}}%
          \def\eta{\mathrm{H}}%
          \let\theta\Theta
          \let\vartheta\Theta
          \def\iota{\mathrm{I}}%
          \def\kappa{\mathrm{K}}%
          \let\lambda\Lambda
          \def\mu{\mathrm{M}}%
          \def\nu{\mathrm{N}}%
          \let\xi\Xi
          \let\pi\Pi
          \let\varpi\Pi
          \def\rho{\mathrm{P}}%
          \def\varrho{\mathrm{P}}%
          \let\sigma\Sigma
          \def\tau{\mathrm{T}}%
          \let\upsilon\Upsilon
          \let\phi\Phi
          \let\varphi\Phi
          \def\chi{\mathrm{X}}%
          \let\psi\Psi
          \let\omega\Omega
        }
      \providecommand{\mathscript}
      	       {\mathscr}
      \providecommand{\cD}{\ensuremath{\mathscript D}\xspace}
      \providecommand{\bbbold}{\mathbb}
      \providecommand{\rN}{\ensuremath{\bbbold N}\xspace}
      \providecommand{\rP}{\ensuremath{\bbbold P}\xspace}
      \providecommand{\rR}{\ensuremath{\bbbold R}\xspace}
      \providecommand{\rT}{\ensuremath{\bbbold T}\xspace}
      \providecommand{\Ae}[1][]{\ensuremath{\ifx|#1|{\ }\else{\:#1\text{-}}\fi\text{almost everywhere\xspace}}}
      \providecommand{\Aa}[1][]{\ensuremath{\text{ for }\ifx|#1|{}\else{\:#1\text{-}}\fi\text{almost all }}}
      \providecommand{\as}[1][]{\ensuremath{\ifx|#1|{\ }\else{#1\text{-}}\fi\text{almost surely}}\xspace}
       \providecommand{\naturals}{\ensuremath{\rN}}
       \providecommand{\NO}[1][]{\ensuremath{\naturals_0\ifx|#1|{}\else^{#1}\fi}}
       \providecommand{\reals}{\rR}
       \providecommand{\fieldmats}[3][F]{\csname#1\endcsname{#2\times#3}}
       \providecommand{\fieldtens}[3][F]{\csname#1\endcsname{{#2}_1\times\dotsb\times{#2}_{#3}}}
       \providecommand{\RO}[1][]{{\reals_{0+}\ifx|#1|{}\else^{#1}\fi}}
       \providecommand{\RP}[1][]{{\reals_+\ifx|#1|\else^{#1}\fi}}
       \providecommand{\Rneg}[1][]{{\reals_-\ifx|#1|\else^{#1}\fi}}
       \providecommand{\ring}[1][A]{\csname r#1\endcsname}
       \providecommand{\field}[1][K]{\csname r#1\endcsname}
       \providecommand{\torus}[1]{\rT\ifthenelse{\equal{#1}1}{}{^#1}}
       \providecommand{\one}{\ensuremath{\bbbold 1}}
       \providecommand{\zerofun}{\ensuremath{\bbbold 0}}
       \providecommand{\ones}[1][]{\one\ifx|#1|\else_{#1}\fi}
       \providecommand{\zeros}[1][]{\zerofun\ifx|#1|\else_{#1}\fi}
       \providecommand{\diracdelta}[1][]{\ensuremath{{\mathrm{\delta}}\ifx|#1|{}\else_{#1}\fi}}
       \providecommand{\pic}{\pi}%
       \providecommand{\pifracl}[2][]{\fracl{\ifx|#1|\else#1\fi\pic}{#2}}
       \providecommand{\pifrac}[2][]{\frac{\ifx|#1|\else#1\fi\pic}{#2}}
       \providecommand{\take}{\smallsetminus}
       \providecommand{\takesetof}[1]{\take\setof{#1}}
       \providecommand{\takeset}\takesetof
       \providecommand{\takeel}\oldneg%
       \providecommand{\closure}[2][]{\ifx|#1|\overline{#2}\else\operatorname{clos}_{#1}{#2}\fi}
       \providecommand{\inner}{\cdot}
       \providecommand{\vecprod}{\times}
       \providecommand{\outerp}{\wedge}
       \providecommand{\W}{\ensuremath{\varOmega}\xspace}
       \providecommand{\qp}[2][]{\ensuremath{\ifx|#1|\left(\else\csname#1\endcsname(\fi{#2}\ifx|#1|\right)\else\csname#1\endcsname)\fi}}
       \providecommand{\qpreg}[1]{\ensuremath{(#1)}}
       \providecommand{\qpbig}[1]{\qp[big]{#1}}%
       \providecommand{\qpBig}[1]{\ensuremath{\Big(#1\Big)}}
       \providecommand{\qpbigg}[1]{\ensuremath{\bigg(\!#1\!\bigg)}}
       \providecommand{\qpBigg}[1]{\ensuremath{\Bigg(\!#1\!\Bigg)}}
       \providecommand{\qb}[2][]{\ifx|#1|\left[\else\csname#1\endcsname[\fi{#2}\ifx|#1|\right]\else\csname#1\endcsname]\fi}
       \providecommand{\qc}[2][]{\ensuremath{\ifx|#1|\left\{\else\csname#1\endcsname\{%
           \fi{#2}\ifx|#1|\right\}\else\csname#1\endcsname\}\fi}}
       \providecommand{\qa}[2][]{\ifx|#1|\left\langle\else\csname#1\endcsname\langle%
         \fi{#2}\ifx|#1|\right\rangle\else\csname#1\endcsname\rangle\fi}%
       \providecommand{\qareg}[1]{\ensuremath{\langle#1\rangle}}
       \providecommand{\qabig}[1]{\ensuremath{\big\langle#1\big\rangle}}
       \providecommand{\qaBig}[1]{\ensuremath{\Big\langle#1\Big\rangle}}
       \providecommand{\qabigg}[1]{\ensuremath{\bigg\langle#1\bigg\rangle}}
       \providecommand{\qaBigg}[1]{\ensuremath{\Bigg\langle#1\Bigg\rangle}}
       \providecommand{\qv}[2][]{\ifx|#1|\left|\else\csname#1\endcsname|%
         \fi{#2}\ifx|#1|\right|\else\csname#1\endcsname|\fi}%
       \providecommand{\opinter}[2]{\ensuremath{\left(#1,#2\right)}\xspace}
       \providecommand{\clinter}[2]{\ensuremath{\left[#1,#2\right]}\xspace}
       \providecommand{\opclinter}[2]{\ensuremath{\left(#1,#2\right]}\xspace}
       \providecommand{\clopinter}[2]{\ensuremath{\left[#1,#2\right)}\xspace}   
       \providecommand{\opintertopinfty}[1]{\opinter{#1}\infty}
       \providecommand\optyinter\opintertopinfty
       \providecommand{\opinterbotinfty}[1]{\opinter{-\infty}{#1}}
       \providecommand\tyopinter\opinterbotinfty
       \providecommand{\clintertopinfty}[1]{\clopinter{#1}\infty}
       \providecommand{\cltyinter}\clintertopinfty
       \providecommand{\clinty}\clintertopinfty
       \providecommand{\clinterbotinfty}[1]{\opclinter{-\infty}{#1}}
       \providecommand{\tyclinter}\clinterbotinfty
       \providecommand{\expp}[1]{\ensuremath{\e^{#1}}}
       \providecommand{\compowqp}[2]{\ensuremath{\qp{\!#2\!\!}^{\kern -.4em #1}\!}}
       \providecommand{\powqpreg}[2]{\ensuremath{%
           \qpreg{#2}^{\kern 0em\lower .1ex\hbox{\scriptsize $#1$}}\kern-.3em}}
       \providecommand{\powqpbig}[2]{\ensuremath{%
           \qpbig{#2}^{\kern -.2em\lower .3ex\hbox{\scriptsize $#1$}}\kern-.3em}}
       \providecommand{\powqpBig}[2]{\ensuremath{%
           \qpBig{#2}^{\kern -.2em\lower .3ex\hbox{\scriptsize $#1$}}\kern-.3em}}
       \providecommand{\powqpbigg}[2]{\ensuremath{%
           \qpbigg{#2}^{\kern -.2em\lower .3ex\hbox{\scriptsize $#1$}}\kern-.3em}}
       \providecommand{\powqpBigg}[2]{\ensuremath{%
           \qpBigg{#2}^{\kern -.2em\lower .3ex\hbox{\scriptsize $#1$}}}}
       \providecommand{\powp}[3][]{{#3}\ifx|#1|^{#2}\else{#1}^{#2}\fi}%
       \providecommand{\pow}[2][]{\ifx|#1|\operatorname{pow}^{#2}\else\powp{#2}{#1}\fi}%
       \providecommand{\ppow}[3][]{\powp[#1]{#3}{#2}}
       \providecommand{\normpow}[3][]{\powp{#3}{\norm[#1]{#2}}}
       \providecommand{\norm}[2][]{\ifx|#1|\left|\else\csname#1\endcsname|\fi#2\ifx|#1|\right|\else\csname#1\endcsname|\fi}
       \providecommand{\normon}[3][]{\norm[#1]{#2}_{#3}}
       \providecommand{\abs}[2][]{\ensuremath{\ifx|#1|{\left|#2\right|}\else{\csname#1\endcsname|{#2}\csname#1\endcsname|}\fi}}
       \providecommand{\Norm}[2][]{\ifx|#1|\left\|\else\csname#1\endcsname\|\fi{#2}\ifx|#1|\right\|\else\csname#1\endcsname\|\fi}
       \providecommand{\Normon}[3][]{\Norm[#1]{#2}_{#3}}
       \providecommand{\normonsob}[5][]{\normon[#1]{#2}{\sob{#3}{#4}\if|#5|{}\else(#5)\fi}}
       \providecommand{\Normonsob}[5][]{\Normon[#1]{#2}{\sob{#3}{#4}\if|#5|{}\else(#5)\fi}}
       \providecommand{\normonsobh}[4][]{\normon[#1]{#2}{\sobh{#3}\if|#4|{}\else(#4)\fi}}
       \providecommand{\normonsobhz}[4][]{\normon[#1]{#2}{\sobhz{#3}\if|#4|{}\else(#4)\fi}}
       \providecommand{\Normonsobh}[4][]{\Normon[#1]{#2}{\sobh{#3}\if|#4|{}\else(#4)\fi}}
       \providecommand{\Normonsobhz}[4][]{\Normon[#1]{#2}{\sobhz{#3}\if|#4|{}\else(#4)\fi}}
       \providecommand{\Normonleb}[4][]{\Normon[#1]{#2}{\leb{#3}\if|#4|\else(#4)\fi}}
       \providecommand{\ltwop}[3][]{\ensuremath{\qa{#2,#3}\ifx|#1|\else_{#1}\fi}}
       \providecommand{\ltwopreg}[2]{\ensuremath{\qareg{#1,#2}\ifx|#1|\else_{#1}\fi}}
       \providecommand{\ltwopbig}[2]{\ensuremath{\qabig{#1,#2}\ifx|#1|\else_{#1}\fi}}
       \providecommand{\ltwopBig}[2]{\ensuremath{\qaBig{#1,#2}\ifx|#1|\else_{#1}\fi}}
       \providecommand{\ltwopbigg}[2]{\ensuremath{\qabigg{#1,#2}\ifx|#1|\else_{#1}\fi}}
       \providecommand{\ltwopBigg}[2]{\ensuremath{\qaBigg{#1,#2}\ifx|#1|\else_{#1}\fi}}
       \providecommand{\duality}[3][]{%
         \ifx#1\left\langle#2\middle|#3\right\rangle%
         \else%
         #1\langle%
         #2%
         #1|%
         #3%
         #1\rangle%
         \fi}%
       \providecommand{\average}[2][]{{\qa{#2}\ifx|#1|\else_{#1}\fi}}
       \providecommand{\ensemble}[2]{\ensuremath{\left\{ #1:\;#2 \right\}}}
       \providecommand{\setofsuch}{\ensemble}%
       \providecommand{\setof}[1]{{\qc{#1}}}
       \providecommand{\conditionalto}[1]{{\left|{#1}\right.}}
      \providecommand{\measure}[1]{\ensuremath{\mathcalbf{\MakeUppercase{#1}}}}
      \providecommand{\probmeasure}[2][]{{\measure{#2}}\ifx|#1|\else_{#1}\fi}
      \providecommand{\Prob}{}
      \renewcommand{\Prob}[1][]{\probmeasure[{#1}]{p}}
      \providecommand{\randvars}[1][\Prob]{\operatorname{RV}\ifx|#1|{}\else{(#1)}\fi}
      \providecommand{\discrandvars}[1][\Prob]{\operatorname{DRV}\ifx|#1|{}\else{({#1)}\fi}} 
      \providecommand{\contrandvars}[1][\Prob]{\ensuremath{\operatorname{CDRV}\ifx|#1|{}\else(#1)\fi}} 
       \def\env@matrix{\hskip -\arraycolsep
        \let\@ifnextchar\new@ifnextchar
        \array{*\c@MaxMatrixCols c}}
       \renewcommand*\env@matrix[1][c]{\hskip -\arraycolsep
         \let\@ifnextchar\new@ifnextchar
         \array{*\c@MaxMatrixCols #1}}
       \providecommand{\irow}[2]{#1_{#2}}%
       \providecommand{\icol}[2]{#1^{#2}}%
       \providecommand{\ijrowcol}[3]{\icol{\irow{#1}{#2}}{#3}}
       \providecommand{\entry}[1]{\qb{#1}}
       \providecommand{\vecentry}[2]{\irow{#1}{#2}}
       \providecommand{\colvecentry}\vecentry
       \providecommand{\covecentry}[2]{\icol{#1}{#2}}
       \providecommand\rowvecentry\covecentry
       \providecommand{\rowof}[1]{\qb{#1}}
       \providecommand{\disvecof}[2][r]{\begin{bmatrix}[#1]#2\end{bmatrix}}
       \providecommand{\vecof}[2][r]{\mathchoice{\disvecof[#1]{#2}}{\qp{#2}}{\qp{#2}}{\qp{#2}}}
       \providecommand{\getentryi}[2]{\irow{\entry{#1}}{#2}}
       \providecommand{\getcolentry}\getentryi
       \providecommand{\getvecentry}[2]{\getentryi{\vec #1}{#2}}
       \providecommand{\discolvecitwo}[1]{\discolvectwo{\vecentry{#1}1}{\vecentry{#1}2}}
       \providecommand{\discolvecintwo}\discolvecitwo%
       \providecommand{\matof}[1]{\qb{#1}}
       \providecommand{\dismatof}[2][r]{%
         \mathchoice{%
           \begin{bmatrix}[#1]#2\end{bmatrix}%
         }{%
           \qb{
             \begin{smallmatrix}
               #2
             \end{smallmatrix}
           }
         }{%
           \qb{
             \begin{smallmatrix}
               #2
             \end{smallmatrix}
           }
         }{%
           \qb{
             \begin{smallmatrix}
               #2
             \end{smallmatrix}
           }
         }
       }
       \providecommand{\matentry}[3]{\ijrowcol{#1}{#2}{#3}}
       \providecommand{\block}[5]{\ijrowcol{#1}{\ifx#2#3{\rowof{#2}}\else\rowof{{#2}\dotsc{#3}}\fi}{\ifx#4#5{\rowof{#4}}\else\rowof{{#4}\dotsc{#5}}\fi}}
       \providecommand{\colblock}[3]{\getvecentry{#1}{\ifx#2#3{#2}\else\fromto{#2}{#3}\fi}}
       \providecommand{\matskelijnm}[5]{\ijrowcol{\matof{#1}}{#2\integerbetween1{#4}}{#3\integerbetween1{#5}}}
       \providecommand{\sqmatskelijn}[4]{\matskelijnm{#1}{#2}{#3}{#4}{#4}}
       \providecommand{\dismatskeldots}[4]{
         \dismatof[c]{
           #1&\dotsc&#3
           \\
           \vdots & \ddots &\vdots
           \\
           #2&\dotsc&#4
         }
       }
       \providecommand{\dismatcommfromtofromto}[5]{
         \dismatskeldots{#1#2#4}{#1#3#4}{#1#2#5}{#1#3#5}
       }
       \providecommand{\dismatcustfromtofromto}[6][matentry]{
         \dismatcommfromtofromto{\csname#1\endcsname{#2}}#3#4#5#6
       }
       \providecommand{\dismatcustfromtofromto}[6][matentry]{
         \dismatskeldots{%
           \csname#1\endcsname{#2}{#3}{#4}%
         }{%
           \csname#1\endcsname{#2}{#3}{#6}%
         }{%
           \csname#1\endcsname{#2}{#5}{#4}%
         }{%
           \csname#1\endcsname{#2}{#5}{#6}%
         }%
       }%
       \providecommand{\dismatcustfromtofromto}[6][matentry]{
         \dismatof{
           \csname#1\endcsname{#2}{#3}{#4}&\dotsc&\csname#1\endcsname{#2}{#3}{#6}
           \\
           \vdots & \ddots &\vdots
           \\
           \csname#1\endcsname{#2}{#5}{#4}&\dotsc&\csname#1\endcsname{#2}{#5}{#6}
         }
       }
       \providecommand{\dissysaxbdotsnm}[5]{\begin{matrix}[r]%
           \matentry{#1}11\vecentry{#2}1&+\dotsb&+\matentry{#1}1{#5}\vecentry{#2}{#5}
           &
           =
           \ifx|#3|0\else{\vecentry {#3}1}\fi
           \\
           \dotsb
           \\
           \matentry{#1}{#4}1\vecentry{#2}1&+\dotsb&+\matentry{#1}{#4}{#5}\vecentry{#2}{#5}
           &
           =
           \ifx|#3|0\else{\vecentry {#3}{#4}}\fi
       \end{matrix}}
       \providecommand{\seqof}[1]{\qp{#1}}%
       \providecommand{\seqs}[2]{\seqof{#1}_{#2}}
       \providecommand{\sets}[2]{\setof{#1}_{#2}}%
       \providecommand{\seqi}[3][]{\seqs{#2_{#3}}{\ifx|#1|{#3}\else{{#3}\in{#1}}\fi}}%
       \providecommand{\sequ}[3][]{\seqs{#2^{#3}}{\ifx|#1|{#3}\else{{#3}\in{#1}}\fi}}%
       \providecommand{\subseqi}[4][]{\seqs{#2_{{#3}_{#4}}}{\ifx|#1|{#4}\else{{#4}\in{#1}}\fi}}%
       \providecommand{\seti}[3][]{\sets{#2_{#3}}{\ifx|#1|_{#3}\else_{{#3}\in{#1}}\fi}}%
       \providecommand{\setu}[3][]{\sets{#2^{#3}}{\ifx|#1|{#3}\else{{#3}\in{#1}}\fi}}%
       \let\liminf\relax
       \DeclareMathOperator*{\liminf}{liminf}
       \let\limsup\relax
       \DeclareMathOperator*{\limsup}{limsup}
       \providecommand{\limofat}[3][]{\ensuremath{\lim_{\ifx|#1|{}\else{#1\ni}\fi#3}{#2}}}
       \providecommand{\limsupofat}[3][]{\ensuremath{\limsup_{\ifx|#1|{}\else{#1\ni}\fi#3}{#2}}}
       \providecommand{\liminfofat}[3][]{\ensuremath{\liminf_{\ifx|#1|{}\else{#1\ni}\fi#3}{#2}}}
       \providecommand{\stringdotsfrom}[3][]{\ensuremath{#2\ifx|#1|\else#1\fi\,#3\ifx|#1|\else#1\fi\,\dotsc}}
       \providecommand{\listdotsfrom}[3][]{\ensuremath{#2\ifx|#1|\else#1\fi,#3\ifx|#1|\else#1\fi,\dotsc}}
       \providecommand{\stringdotsfromto}[3][]{\ensuremath{#2\ifx|#1|\else#1\fi\,\dotsc\,#3\ifx|#1|\else#1\fi}}
       \providecommand{\listdotsfromto}[3][]{\ensuremath{#2\ifx|#1|\else#1\fi,\dotsc,#3\ifx|#1|\else#1\fi}}
       \providecommand{\listifromto}[5][]{\ensuremath{{#2}_{#3}\ifx|#1|\else#1\fi},\text{ for }\ensuremath{\rangefromto{#3}{#4}{#5}}\xspace}
       \providecommand{\listufromto}[5][]{\ensuremath{{#2}^{#3}\ifx|#1|\else#1\fi},\text{ for }\ensuremath{\rangefromto{#3}{#4}{#5}}\xspace}
       \providecommand{\listitwo}[2][]{\ensuremath{#2_1\ifx|#1|\else#1\fi,#2_2\ifx|#1|\else#1\fi}}
       \providecommand{\listutwo}[2][]{\ensuremath{#2^1\ifx|#1|\else#1\fi,#2^2\ifx|#1|\else#1\fi}}
       \providecommand{\listithree}[2][]{\ensuremath{#2_1\ifx|#1|\else#1\fi,#2_2\ifx|#1|\else#1\fi,#2_3\ifx|#1|\else#1\fi}}
       \providecommand{\listithreez}[2][]{\ensuremath{#2_0\ifx|#1|\else#1\fi,#2_1\ifx|#1|\else#1\fi,#2_2\ifx|#1|\else#1\fi}}
       \providecommand{\listifourz}[2][]{\ensuremath{#2_0\ifx|#1|\else#1\fi,#2_1\ifx|#1|\else#1\fi,#2_2\ifx|#1|\else#1\fi,#2_3\ifx|#1|\else#1\fi}}
       \providecommand{\listuthree}[2][]{\ensuremath{#2^1\ifx|#1|\else#1\fi,#2^2\ifx|#1|\else#1\fi,#2^3\ifx|#1|\else#1\fi}}
       \providecommand{\jump}[2][]{\ensuremath{\left\llbracket\ifx|#1|{\,}\else{#1}\fi #2\right\rrbracket}}
       \providecommand{\njump}[2][]{\ensuremath{\left\llbracket\normal\ifx|#1|{\,}\else{#1}\fi #2\right\rrbracket}}
       \providecommand{\jumpnt}[2][]{\ensuremath{\left\llbracket #2 \ifx|#1|{\,}\else{#1}\fi\normal\transposed\right\rrbracket}}
       \providecommand{\fromto}[2]{\ensuremath{\setof{#1\dotsc#2}}}%
       \providecommand{\integerbetween}[2]{\ensuremath{={#1},\dotsc,{#2}}}
       \providecommand{\rangefromto}[3]{\ensuremath{#1\integerbetween{#2}{#3}}}
       \providecommand{\e}{\ensuremath{\operatorname{e}\!}\xspace}
       \providecommand{\d}{}
       \renewcommand{\d}[1][]{\ensuremath{\operatorname{d}\!\ifx|#1|\else{_{#1}}\fi}}
       \providecommand{\ds}[1][]{\d{\measure S}}
       \providecommand{\D}[1][]{\ensuremath{\operatorname{D}\!\ifx|#1|\else{_{#1}}\fi}}
      \providecommand{\registered}%
      {\ensuremath{^\text{\textregistered}}}
      \providecommand{\tand}{\ensuremath{\text{ and }}}
      \providecommand{\constant}[1]{\ensuremath{C_{#1}}}
      \providecommand{\constext}[2][]{\constant{\textup{#2}{\ifx|#1|{}\else{,\ensuremath{#1}}\fi}}}            %
      \providecommand{\constref}[2][]{\ensuremath{\constant{\textup{\ref{#2}{\ifx|#1|{}\else{,\ensuremath{#1}}\fi}}}}}
      \providecommand{\constdef}[2][]{\label{#2}\ensuremath{\constant{\textup{\ref{#2}{\ifx|#1|{}\else{,\ensuremath{#1}}\fi}}}}}
      \providecommand{\funkref}[3][]{\ensuremath{{#3}_{\textup{\ref{#2}{\ifx|#1|{}\else{,\ensuremath{#1}}\fi}}}}}
      \providecommand{\diam}{\operatorname{diam}}
      \providecommand{\curl}{\operatorname{curl}}
      \renewcommand{\curl}[1][]{\nabla\ifx|#1|{}\else\kern-2pt_{#1}\fi\kern-2pt\vecprod}
      \renewcommand{\div}[1][]{\nabla\ifx|#1|{}\else\kern-2pt_{#1}\fi\kern-1pt\inner}
      \providecommand{\divof}[2][]{\div[#1]\ifx|#2|{}\else\qb{#2}\fi}%
      \providecommand{\divideabyb}[2]{\operatorname{div}(a,b)}
      \providecommand{\grad}{}
      \renewcommand{\grad}[1][]{\nabla\ifx|#1|\else_{#1}\fi}
      \providecommand{\rot}[1][]{\nabla\ifx|#1|\else_{#1}\fi\outerp}
      \providecommand{\rowdiv}[1][]{\D\ifx|#1|{}\else\kern-1pt_{#1}\kern-2pt\fi\cdot}
      \providecommand{\rowdivof}[2][]{\rowdiv[#1]\ifx|#2|{}\else\qb{#2}\fi}
      \providecommand{\inv}[1][]{\operatorname{inv}\ifx|#1|\else^{#1}\fi}
      \providecommand{\ivt}[1]{\operatorname{ivt}\ifx|#1|\else^{#1}\fi}
      \providecommand\tensorinvariant\ivt
      \providecommand{\mod}{}
      \renewcommand{\mod}[1][]{\operatorname{mod}\ifx|#1|\else\kern-1pt_{#1}\fi}
      \let\oldfrac\frac
      \renewcommand{\frac}[3][]{\ifx|#1|\oldfrac{#2}{#3}\else\begin{array}{#1}{#2}\\\hline{#3}\end{array}\fi}
      \providecommand{\fracl}[3][]{\ifx|#1|\nicefrac{#2}{#3}\else{#2}#1/{#3}\fi}
      \providecommand{\qpfracl}[3][]{\qp{\ifx|#1|\fracl{#2}{#3}\else{#2}#1/{#3}\fi}}
      \providecommand{\qpfrac}[3][]{\qp{\ifx|#1|\frac{#2}{#3}\else{#2}#1/{#3}\fi}}
      \providecommand{\absfracl}[3][]{\abs{\ifx|#1|\fracl{#2}{#3}\else{#2}#1/{#3}\fi}}
      \providecommand{\absfrac}[3][]{\abs{\ifx|#1|\frac{#2}{#3}\else{#2}#1/{#3}\fi}}
      \providecommand{\fraclff}[3][]{\ifx|#1|{#2}/{#3}\else{#1}\fracl{#2}{#3}\fi}
      \providecommand{\eye}[1][]{\vec{\mathrm I}\ifx|#1|{}\else_{#1}\fi}%
      \providecommand{\numeye}[1][]{\boldsymbol{\mathsf{I}}\ifx|#1|{}\else_{#1}\fi}%
      \providecommand{\doteye}{{\ooalign{$\eye${\kern-0.41em\raise1.56ex\hbox{\tiny$\bullet$}}}}}%
      \providecommand{\Eye}[1]{
        \begin{bmatrix}
        \ifthenelse{#1>1}{
          \ifthenelse{#1>2}{
            \ifthenelse{#1>3}{
              \ifthenelse{#1>4}{
                1&\zeroentry&\dotso&\zeroentry
                \\
                \zeroentry&1&\dotso&\zeroentry
                \\
                \vdots&\vdots&\ddots&\vdots
                \\
                \zeroentry&\zeroentry&\dotso&1
              }{        
                1&\zeroentry&\zeroentry&\zeroentry
                \\
                \zeroentry&1&\zeroentry&\zeroentry
                \\
                \zeroentry&\zeroentry&1&\zeroentry
                \\
                \zeroentry&\zeroentry&\zeroentry&1
              }
            }{
              1&\zeroentry&\zeroentry
              \\
              \zeroentry&1&\zeroentry
              \\
              \zeroentry&\zeroentry&1
            }
          }{
            1&\zeroentry
            \\
            \zeroentry&1
          }
        }{
          1
        }
        \end{bmatrix}
      }
      \providecommand{\lebmeas}[1][]{\measure L^{#1}}     %
      \providecommand{\lebmeasof}[2][]{\ifx|#1|\left|#2\right|\else\lebmeas[#1]\qp{#2}\fi}         %
      \providecommand{\meshsize}[1][]{h\ifx|#1|\else_{#1}\fi}
      \let\oldneg\neg
      \renewcommand{\neg}[1]{\left[#1\right]_-}
      \providecommand{\dash}[1][']{\ifthenelse{\equal{#1}{'}\OR\equal{#1}{''}}{#1}{^{(#1)}}}
      \providecommand{\pdfrac}[2][]{\ensuremath{\frac{\partial\ifx|#1|\phantom{#2}\else{#1}\fi}{\partial{#2}}}} %
      \providecommand{\pdfracpow}[3][]{\ensuremath{\frac{\partial^{#3}\ifx|#1|\phantom{#2}\else{#1}\fi}{\partial{#2}^{#3}}}} %
      \providecommand{\pd}[2][]{\ensuremath{\partial_{#2}}{\ifx|#1|{}\else{\qb{#1}}\fi}} %
      \providecommand{\dd}[2][]{\ensuremath{\ifx|#1|\frac{\d}{\d{#2}}\else\frac[l]{\d{#1}}{\d{#2}}\fi}}    %
      \renewcommand{\Im}{\operatorname{im}}                 %
      \renewcommand{\Re}{\operatorname{re}}                 %
      \providecommand{\imaginpart}[1][]{\Im{\ifx|#1|{}\else\qp{#1}\fi}} %
      \providecommand{\realpart}[1][]{\Re{\ifx|#1|{}\else\qp{#1}\fi}} %
      \providecommand\determinant\det
      \providecommand{\transpose}{\intercal}%
      \providecommand{\transposed}{{}^\transpose}
      \providecommand{\Transpose}[1]{\ensuremath{{#1}^{\transpose}}}
      \providecommand{\Transposevec}[1]{\Transpose{\vec{#1}}}
      \providecommand{\transposevec}[1]{\Transposevec{#1}}
      \providecommand{\orthogonalto}[1][]{\ensuremath{\perp\ifx|#1|{}\else{\!_{#1}\,}\fi}}
      \providecommand{\rowof}[1]{\ensuremath{\mathchoice{\vecof{#1}}{\qb{#1}}{\qb{#1}}{\qb{#1}}}}
      \providecommand{\colvec}[1]{\vecof{#1}}
      \providecommand{\rowvectwo}[2]{\ensuremath{\vecof{#1,#2}}}
      \providecommand{\colvectwo}[2]{\ensuremath{%
          \mathchoice%
              {\discolvectwo{#1}{#2}}%
              {\rowvectwo{#1}{#2}\transposed}%
              {\rowvectwo{#1,#2}\transposed}%
              {\rowvectwo{#1,#2}\transposed}%
        }
      }
      \providecommand{\coltwovec}\colvectwo
      \providecommand{\colvecthree}[3]{
        \mathchoice%
            {\discolvecthree{#1}{#2}{#3}}%
            {\colvec{#1,#2,#3}}%
            {\colvec{#1,#2,#3}}%
            {\colvec{#1,#2,#3}}%
      }
      \providecommand{\colvecithree}[1]{\colvecthree{\vecentry{#1}1}{\vecentry{#1}2}{\vecentry{#1}3}}
      \providecommand{\discolvec}[2][r]{\ensuremath{\begin{bmatrix}[#1]#2\end{bmatrix}}}
      \providecommand{\discolvectwo}[3][r]{\ensuremath{\discolvec[#1]{#2\\#3}}}
      \providecommand{\discolvecthree}[4][r]{\ensuremath{\discolvec[#1]{#2\\#3\\#4}}}
      \providecommand{\discolvecitwo}[1]{\discolvectwo{\vecentry{#1}1}{\vecentry{#1}2}}
      \providecommand{\zeroentry}{\ifthenelse{\boolean{showzeroentries}}{{0}}{\phantom0}}
      \providecommand{\zeroentrywarning}{\ifthenelse{\boolean{showzeroentries}}{}{%
          \ensuremath{\text{($0$ entries omitted)\xspace}}}}
      \providecommand{\smint}{\ensuremath{{\text{\textbf{/}}}\kern-.75em\smallint}}
      \renewcommand{\smint}[1][]{\lower12.3pt\hbox{\begin{tikzpicture}\draw[line width=.75pt] (-3pt,-0.5)--(1pt,-0.5) node[pos=0.6]{$\int$};\path (3pt,-24pt)node {\scriptsize $#1$};\end{tikzpicture}}}
      \providecommand{\lap}{\ensuremath{\mathrm\Delta}}
      \providecommand{\lapin}[1][]{\lap\ifx|#1|\else_{#1}\fi}
      \providecommand{\normalsymbol}{\operatorname{\mathbf{n}}}
      \renewcommand{\normalsymbol}{\vec{\operatorname{n}}}
      \providecommand{\normal}[1][]{\normalsymbol\ifx|#1|\else_{#1}\fi}%
      \providecommand{\norm@l}[1][]{\normalsymbol\ifx|#1|\else_{#1}\fi}%
      \providecommand{\normalto}[2][]{\ensuremath{\norm@l[#2]\ifx|#1|\else\qp{#1}\fi}}
      \providecommand{\normalder}[1][]{\ensuremath{\norm@l\ifx|#1|\else\qp{#1}\fi{\inner\grad}}}
      \providecommand{\tangentialsymbol}{\operatorname{\textbf{t}}}
      \providecommand{\tangentialto}[2][]{\tangentialsymbol\ifx|#1|\else^{#1}\fi\ifx|#2|\else_{#2}\fi}
      \providecommand{\united}{\ensuremath{\cup}}
      \providecommand{\join}{\united}
      \providecommand{\union}[1]{\ensuremath{\bigcup\nolimits_{#1}}}
      \providecommand{\unions}[3][]{\union{#2\in{#3}\ifx|#1|\else:#1\fi}}
        \let\vec\undefined
        \providecommand{\vec}[1]{\ensuremath{\boldsymbol{#1}}}
        \renewcommand{\vec}[1]{\ensuremath{\boldsymbol{#1}}}
      \providecommand{\hatmat}[1]{\hat{\mat{#1}}}
      \providecommand{\geomat}[1]{\vec{\UCmath{#1}}}
      \providecommand{\mat}[1]{\geomat{#1}} %
      \providecommand{\Prob}[1][]{\ensuremath{\operatorname{Prob}\ifx|#1|{}\else_{#1}\fi}}
      \providecommand{\pdf}[2][]{\ensuremath{\operatorname{pdf}_{#2\ifx|#1|{}\else{\conditionalto{#1}}\fi}}\xspace}
      \providecommand{\expectation}{\ensuremath{\operatorname{E}}}
      \providecommand{\EX}[1][]{\ensuremath{\expectation\ifx|#1|{}\else_{#1}\fi}}
      \providecommand{\gausskernel}[3][x]{%
        \ensuremath{
          \exp\frac{-\if#20{#1}\else(#1-\mu)\fi^2}{%
            2\if#31{}\else\powp2{#3}\fi}%
        }%
      }
      \providecommand{\gaussdistribution}[3][x]{%
        \ensuremath{\frac1{\sqrt{2\pic}\if#31{}\else#3\fi}%
          \gausskernel[#1]{#2}{#3}
        }%
      }%
      \providecommand{\PD}[1]{\operatorname{PD}\qpreg{#1}}
      \providecommand{\pdspace}[1]{\PD{\linspace v}}
      \providecommand{\pdmats}[2][F]{\PD{\csname#1\endcsname{#2}}}
      \providecommand{\SPD}{\operatorname{SPD}}
      \providecommand{\spdmats}[2][F]{\SPD(\csname#1\endcsname{#2})}
       \providecommand{\Continuous}{\ensuremath{\operatorname C}\xspace}%
       \providecommand{\Hspace}{\ensuremath{\operatorname H}\xspace}
       \providecommand{\Lebesgue}{\ensuremath{\operatorname L}\xspace}
       \providecommand{\Besovspace}{\ensuremath{\operatorname B}\xspace}
       \providecommand{\Weaklyder}{\ensuremath{\operatorname W}\xspace}
       \providecommand{\dual}[1]{\ensuremath{{#1}'}}
       \providecommand{\dualspace}[2][]{\dual{\linspace{#2}\ifx|#1|\else{_{#1}}\fi}}
       \providecommand{\bidual}[1]{\ensuremath{{#1}''}}
       \providecommand{\bidualspace}[2][]{\bidual{\linspace{#2}\ifx|#1|\else{_{#1}}\fi}}
       \providecommand{\cont}[1]{\ensuremath{\Continuous^{#1}}}
       \providecommand{\diffable}[2][]{\ensuremath{\cD\ifx|#1|\else^{#1}\fi(#2)}}
       \providecommand{\BV}[1]{\ensuremath{\operatorname{BV}}}
       \providecommand{\leb}[1]{\ensuremath{\Lebesgue_{#1}}}
       \providecommand{\lebloc}[1]{\ensuremath{{{\Lebesgue}^{\kern-.20em\lower .1ex\hbox{\tiny\textrm{\textup{loc}}}}_{#1}}}}
       \providecommand{\lebnorm}[3][]{\ensuremath{\Norm{#2}_{\leb{#3}\ifx|#1|{}\else(#1)\fi}}}
       \providecommand{\bes}[3][]{\ensuremath{\Besovspace^{#2}_{#3\ifx|#1|\else,#1\fi}}}
       \providecommand{\sob}[2]{\ensuremath{{\smash\Weaklyder}^{#1}_{#2}}}
       \providecommand{\sobh}[1]{\ensuremath{\Hspace^{#1}}}
       \providecommand{\vecsobh}[1]{\ensuremath{\vec\Hspace^{#1}}}
       \ProvideDocumentCommand{\hdiv}{ O{} O{}}{\vecsobh{\operatorname{div}}\ifx+#1+\else_{0|#1}\fi\ifx|#2|\else(#2)\fi}
       \providecommand{\hcurl}[1][]{\vecsobh{\operatorname{curl}}\ifx|#1|\else(#1)\fi}
       \providecommand{\sobhz}[2][]{\sobh{#2}_{0\ifx+#1+\else|#1\fi}}
       \providecommand{\Lip}[1][]{\ensuremath{\operatorname{Lip}}\ifx|#1|{}\else{\qp{#1}}\fi}
       \ProvideDocumentCommand{\polyring}{ O{X} O{A} }{\ring[#2][#1]}
       \ProvideDocumentCommand{\polyfield}{ O{X} O{} }{\field[#2][#1]}
       \providecommand{\polyreals}[1][]{\polyfield[][R]\ifx|#1|\else^{#1}\fi}
       \providecommand{\poly}[2][]{\ensuremath{\rP\ifx#1\else_{#1}\fi^{#2}}}
       \providecommand{\Symmatrices}[2][R]{\ensuremath{\operatorname{Sym}{(\csname#1\endcsname{#2})}}}
       \providecommand{\SAmatrices}[2][F]{\ensuremath{\operatorname{SA}{(\csname#1\endcsname{#2})}}}
       \providecommand{\mesh}[2][]{\ensuremath{\mathcalbf{\MakeUppercase{#2}}\ifx|#1|\else_{#1}\fi}}
      \providecommand{\crouzeixraviart}[1][1]{\operatorname{CR}\ifx|#1|{}\else{^{#1}}\fi}
      \providecommand{\nodes}{\operatorname{Nodes}}
      \providecommand{\nodesofmesh}[2][]{\nodes\ifx|#1|\else_{#1}\fi\mesh{#2}}
      \providecommand{\linspace}[1]{\mathscript{\MakeUppercase{#1}}}
      \providecommand{\Lin}{\operatorname{Lin}}
      \providecommand{\CL}{\operatorname{CL}}
      \providecommand{\linops}[3][]{\ensuremath{\Lin\ifx|#1|\else^{#1}\fi\qp{{#2}\to{#3}}}}
      \providecommand{\clinops}[3][]{\ensuremath{\CL\ifx|#1|\else^{#1}\fi\qp{{#2}\to{#3}}}}
      \providecommand{\fepartition}[2][]{\mathscript{\MakeUppercase{#2}}\ifx|#1|{}\else_{#1}\fi}
      \providecommand{\fespace}[2][]{\mathbb{\MakeUppercase{#2}}\ifx|#1|{}\else_{#1}\fi}
      \providecommand{\hatfespace}[2][]{\widehat{\mathbb{\MakeUppercase{#2}}}\ifx|#1|{}\else_{#1}\fi}
      \providecommand{\vespace}[1][]{\fespace v\ifx|#1|\else_{#1}\fi}
      \providecommand{\hatvespace}[1][]{\hatfespace v\ifx|#1|\else_{#1}\fi}
      \providecommand{\fezerospace}[2][]{\ensuremath{\mathring{\fespace{#2}}\ifx|#1|{}\else_{#1}\fi}}
      \providecommand{\fe}[2][]{\ensuremath{\UCmath{#2}\ifx|#1|\else_{#1}\fi}}%
      \providecommand{\vecfe}[2][]{\ensuremath{\vec{\fe{#2}}\ifx|#1|{}\else{_{#1}}\fi}}%
      \providecommand{\matfe}[2][]{\ensuremath{\mat{\fe{#2}}\ifx|#1|{}\else{_{#1}}\fi}}%
      \providecommand{\hatmatfe}[2][]{\ensuremath{\hatmat{\UCmath{#2}}\ifx|#1|{}\else{_{#1}}\fi}}%
      \providecommand{\Foreach}[1][]{\text{\; for each \ifx|#1|\else#1\ \fi}}%
      \providecommand{\funk}[4][]{\ensuremath{#2:#3\ifx|#1|\else(\subseteq #1)\fi\to#4}}
      \providecommand{\isomorphicto}{\leftrightarrows}
      \providecommand\isomorphic\isomorphicto
      \providecommand{\weaklyto}{\rightharpoonup}
      \providecommand{\evalat}[3][]{\qb{#2}_{\ifx|#1|{}\else#1=\fi#3}}
      \providecommand{\evaldiff}[4][]{\qb{#2}^{\ifx|#1|{}\else#1=\fi#3}_{\ifx|#1|{}\else#1=\fi#4}}
      \providecommand{\bs}{\char '134}   %
      \providecommand{\Program}[1]{\textsf{#1}\xspace}%
      \providecommand{\Source}[1]{\nolinkurl{#1}\xspace}
      \providecommand{\matlabplot}[2][]{%
        \begin{center}
          \includegraphics[width=0.9375\linewidth,trim=64 200 64 200,clip]{#2}%
          \ifx|#1|\else\\#1\fi
        \end{center}%
      }
      \providecommand{\texcommand}[1]{\texttt{\bs{\nolinkurl{#1}}}\xspace}
      \providecommand{\codename}[1]{\nolinkurl{#1}\xspace}
      \providecommand\colorvar[2][a]{\colorbox{#1!6.25}{#2}}%
      \providecommand{\colorvarname}[2][a]{\colorvar[#1]{\Verb{#2}}}
      \providecommand{\codevarname}[1]{\colorvarname[a]{#1}}
      \providecommand\olco\codevarname
      \providecommand{\matlab}{{\footnotesize\Program{MATLAB}}\xspace}%
      \providecommand\MATLAB\matlab
      \ProvideDocumentCommand{\codesnip}{ O{.} O{1.0} m}{%
        \newline
        \begin{minipage}{#2\linewidth}
          \lstinputlisting{#1/#3}
        \end{minipage}
      }
      \providecommand{\codeprint}[2][.]{
        \ \newline
        \begin{minipage}{\linewidth}
          \lstinputlisting{#1/#2}
          \framebox{Contents of file %
            \ifthenelse{\isundefined\pickuppath}{%
             \codename{#2}%
            }{%
              \providecommand{\fullpickuppath}{}%
              \renewcommand{\fullpickuppath}{\pickuppath/\ifx|#1|\else#1/\fi#2}%
              \href{\fullpickuppath}{\codename{#2}}%
          }}
        \end{minipage}
      }
      \providecommand{\codenoprint}[2][.]{
              \providecommand{\fullpickuppath}{}%
              \renewcommand{\fullpickuppath}{\pickuppath/\ifx|#1|\else#1/\fi#2}%
              \href{\fullpickuppath}{\codename{#2}}%
      }
      \providecommand{\indexen}[2][]{{\ifthenelse{\boolean{shownotes}}{\color b}{}#2\ifx|#1|\index{#2}\else\index{#1}\fi}}
      \providecommand{\indexma}[2][]{{\ifthenelse{\boolean{shownotes}}{\color b}{}#2\ifx|#1|\index{\(#2\)}\else\index{<#1@\(#2\)}\fi}}
      \providecommand{\ListParameters}{}
      \renewcommand{\ListParameters}%
      {
      	 \setlength{\topsep}{0pt}
      	 \setlength{\leftmargin}{0pt}
               \setlength{\itemsep}{0pt}
      	 \setlength{\parsep}{0pt}
      	 \setlength{\parskip}{0pt}
               \setlength{\labelsep}{0pt}
      	 \setlength{\itemindent}{0pt}
      }
      {%
        \begin{list}%
          {}%
          {\ListParameters%
          
      }}%
      {\end{list}}
      \newcounter{tmpcounter}
      \newcounter{LetterListItem}
      \renewcommand{\theLetterListItem}{(\alph{LetterListItem})}
      \newcounter{CapitalListItem}
      \renewcommand{\theCapitalListItem}{\Alph{CapitalListItem}.}
      \newcounter{NumberListItem}
      \renewcommand{\theNumberListItem}{\arabic{NumberListItem}}
      {
      	\begin{list}%
      	{\theNumberListItem.\ }%
      	{\usecounter{NumberListItem}%
      	 \ListParameters
      	}
      }%
      {\end{list}}
      \newcounter{QuestionListItem}
      \renewcommand{\theQuestionListItem}{\textbf{Question \arabic{QuestionListItem}}}
      {
      	\begin{list}%
      	{\theQuestionListItem.\ }%
      	{\usecounter{QuestionListItem}%
      	 \ListParameters
      	}
      }%
      {\end{list}}
      \newcounter{RomanListItem}
      \renewcommand{\theRomanListItem}{(\roman{RomanListItem})}
      {
      	\begin{list}%
      	{\theRomanListItem\ }%
      	{\usecounter{RomanListItem}
      	 \ListParameters
      	}
      }%
      {\end{list}}
      \newcounter{StepsItem}
      {
      	\begin{list}%
      	{Step \theStepsItem.\ }%
      	{\usecounter{StepsItem}%
      	 \ListParameters
      	}
      }%
      {\end{list}}
      \newcounter{CasesListItem}
      \renewcommand{\theCasesListItem}{\Alph{CasesListItem}}
      {
      	\begin{list}%
      	{\emph{Case \theCasesListItem.}\ }%
      	{\usecounter{CasesListItem}%
      	 \ListParameters
      	}
      }%
      {\end{list}}
      \newcounter{NumCasesListItem}
      \renewcommand{\theNumCasesListItem}{\arabic{NumCasesListItem}}
      {
      	\begin{list}%
      	{\emph{Case \theNumCasesListItem.}\ }%
      	{\usecounter{NumCasesListItem}%
      	 \ListParameters
      	}
      }%
      {\end{list}}
      \newcounter{QAListItem}
      \renewcommand{\theQAListItem}{Q\arabic{QAListItem}:}
      {
      	\begin{list}%
      	{\theQAListItem}%
      	{\usecounter{QAListItem}
      	 \ListParameters
      	}
      }%
      {\end{list}}
      \providecommand{\ListParameters}{}
      \renewcommand{\ListParameters}
      {
      	 \setlength{\topsep}{0em}
      	 \setlength{\leftmargin}{0em}
               \setlength{\itemsep}{0ex}
      	 \setlength{\parsep}{.5ex}
      	 \setlength{\itemindent}{\labelsep}
      	 \addtolength{\itemindent}{\labelwidth}
      }
        \providecommand{\ObsName}{Remark}%
        \providecommand{\RemName}{Remark}%
        \providecommand{\NotName}{Notation}%
        \providecommand{\BFNName}{Big~Fantastic~Note}%
        \providecommand{\DefName}{Definition}%
        \providecommand{\ExaName}{Example}%
        \providecommand{\TheName}{Theorem}%
        \providecommand{\LemName}{Lemma}%
        \providecommand{\ProName}{Proposition}%
        \providecommand{\CorName}{Corollary}%
        \providecommand{\PbmName}{Problem}%
        \providecommand{\HypName}{Hypothesis}%
        \providecommand{\AlgName}{Algorithm}%
        \providecommand{\ExeName}{Exercise}%
        \providecommand{\SolName}{Solution}%
        \providecommand{\ClaName}{Claim}%
        \providecommand{\EsyName}{Essay}%
        \providecommand{\Proofname}{Proof}%
        \providecommand{\Derivename}{Derivation}%
        \providecommand{\Thecounter}{The}
        \providecommand{\Thecounter}{subsection}
      \newcommand{\oltikzgetxy}[3]{%
        \tikz@scan@one@point\pgfutil@firstofone#1\relax
        \edef#2{\the\pgf@x}%
        \edef#3{\the\pgf@y}%
      }
      \providecommand{\pdfformat}[1]{
         \provideboolean{pdfoutput}
         \setboolean{pdfoutput}{#1}%
        \ifthenelse{\boolean{pdfoutput}}{
          \typeout{using pdf}
\makeatletter
\usepackage{pdfsync}
          \providecommand{\graphext}{pdf}
          \renewcommand{\graphext}{pdf}
          \providecommand{\graphextex}{pdf_t}
          \renewcommand{\graphextex}{pdf_t}
        }{
          \typeout{using eps}
          \RequirePackage[dvips]{graphicx,xcolor}
          \providecommand{\graphext}{eps}
          \renewcommand{\graphext}{eps}
          \providecommand{\graphextex}{eps_t}
          \renewcommand{\graphextex}{eps_t}
        }
        \RequirePackage{epsfig}
        \RequirePackage{tikz}
        \RequirePackage{rotating}
\makeatletter
        \RequirePackage{graphicx}
        \RequirePackage{xcolor}
        \provideboolean{darkcolortheme}
        \definecolor{SussexFlint}{rgb}{.00,.19,.21}
        \definecolor{SussexGrey}{rgb}{.51,.58,.49}
        \definecolor{SussexOrange}{rgb}{.94,.29,.00}
        \definecolor{SussexYellow}{rgb}{1.00,.73,.00}
        \definecolor{SussexRed}{rgb}{.94,.01,.49}
        \definecolor{SussexPurple}{rgb}{.48,.06,.44}
        \definecolor{SussexGreen}{rgb}{.00,.58,.46}
        \definecolor{OmarGreen}{rgb}{.00,.68,.36}
        \definecolor{SussexBlue}{rgb}{.00,.58,.65}
        \definecolor{OmarBlue}{rgb}{.00,.38,.65}
        \colorlet{olgreen}{SussexGreen}
        \colorlet{olblue}{OmarBlue}
        \colorlet{a}{OmarBlue}%
        \colorlet{b}{SussexOrange}
        \colorlet{c}{SussexGreen}
        \colorlet{d}{SussexPurple}%
        \colorlet{e}{SussexRed}
        \colorlet{f}{SussexYellow}
        \colorlet{g}{white}%
        \colorlet{h}{SussexGrey}%
        \colorlet{i}{black}%
        \colorlet{j}{SussexFlint}
        \colorlet{colora}{a}
        \colorlet{colorb}{b}
        \colorlet{colorc}{c}
        \colorlet{colord}{d}
        \colorlet{colore}{e}
        \colorlet{colorf}{f}
        \colorlet{colorg}{g}
        \colorlet{colorh}{h}
        \colorlet{colori}{i}
        \colorlet{colorj}{j}
        \newcommand{\mausDarkColorTheme}{
          \colorlet{a}{SussexYellow!50!yellow}
          \colorlet{b}{SussexBlue}%
          \colorlet{c}{SussexRed!50!red}
          \colorlet{d}{SussexOrange!50!yellow}
          \colorlet{e}{SussexGreen!50!green}
          \colorlet{f}{SussexPurple!50!magenta}
          \colorlet{g}{black}%
          \colorlet{h}{SussexFlint!50!black}
          \colorlet{i}{white}%
          \colorlet{j}{SussexGrey}
        }
        \ifthenelse{\boolean{darkcolortheme}}{\mausDarkColorTheme}{}
\makeatletter
      }
      \providecommand{\solution}{\textbf{\SolName.}\xspace}
      \newcounter{phantomedinput}
      \newcounter{phantombox}
      \renewcommand{\thephantombox}{\Alph{phantombox}}%
      \providecommand{\phantombox}[2][]{\stepcounter{phantombox}%
        \ensuremath{
          \boxed{%
            \ifthenelse{\boolean{showphantoms}}{#2}{\phantom{#2}}
            \texttt{\tiny\ \colorbox{i!50}{\color g\thephantombox}}
          }%
        }%
      }%
      \newcommand{\consolution}[2][]{
        \ifthenelse{\boolean{hidesolution}}{#1\setboolean{showphantoms}{false}}{%
          {\setboolean{showphantoms}{true}\color{i!50}\par \small {\solution}\ #2\par\ \\[5pt]}}
      }
      \providecommand{\showmarks}[1]{%
        \ifthenelse{%
          \boolean{showmarks}}{%
          \!\,\marginpar{%
            \tiny [$#1$ mark\ifthenelse{\equal{#1}1}{\phantom{s}}s]}%
        }{}}%
      \newcommand{\condibreak}{\ifthenelse{\boolean{hidesolution}}{\clearpage}{}}
      \newcommand{\solutibreak}{\ifthenelse{\boolean{hidesolution}}{}{\clearpage}}
      \newcommand{\questionly}[1]{\ifthenelse{\boolean{hidesolution}}{#1}{}}
      \newcommand{\solutionly}[1]{\ifthenelse{\boolean{hidesolution}}{}{#1}}
       \providecommand{\qeyword}[1]{\index{#1}\ifthenelse{\boolean{shownotes}}{{\tiny\color e\colorbox{e!6.25}{#1}}}{}}
       \providecommand{\pathword}[2][]{%
         \label{#2}%
         \ifthenelse{\boolean{shownotes}}{%
           \ \\\index{#2@\tiny\codevarname{#2}}{%
             \tiny{\color f\href{\pathwordbase/#2}{\colorvarname[f]{#2}%
           }}}\\
         }{}%
       }
       \providecommand{\targword}[2][]{%
         \label{#2}%
         \ifthenelse{\boolean{shownotes}}{%
           \index{#2@\tiny\codevarname{#2}}{\ensuremath{\tiny\color d-> \href{\pathwordbase/#2}{\colorvarname[d]{#2}\ifx|#1|\else\colorvarname[d]{[#1]}\fi}}}\\
         }{}%
       }
       \providecommand{\sourceurl}[2][]{%
         \ifthenelse{\boolean{shownotes}}{{\ \newline\tiny\colorbox{d!6.25}{\color d\texttt{source: \ifx|#1|\url{#2}\else\href{#2}{#1}\fi}}\newline}}{}}
       \providecommand{\sourcecite}[2][]{%
         \ifthenelse{\boolean{shownotes}}{{\ \\\tiny\colorbox{d!6.25}{\color{d}\texttt{source: \citet[#1]{#2}}}}}{%
       }}
       \providecommand{\conword}[2][]{\ifthenelse{\boolean{shownotes}}{#2}{#1}}
       \providecommand{\solword}[2][]{\ifthenelse{\boolean{hidesolution}}{#1}{#2}}
       \providecommand{\solghost}[1]{\ifthenelse{\boolean{showphantoms}}{#1}{\phantom{#1}}}
        \newcommand{\llabel}[1]{\hypertarget{llineno:#1}{\linelabel{#1}}}
        \newcommand{\lref}[1]{\hyperlink{llineno:#1}{\ref*{#1}}}
        \newcommand\llabel[1]{}
        \newcommand\lref[1]{}
      \providecommand{\mailto}[1]{\href{mailto:#1}{\nolinkurl{#1}}}
      \providecommand{\oldetails}[2]{\ifthenelse{\boolean{showoldetails}}{#1}{#2}}
   \newtheoremstyle{plain}%
     {}%
     {}%
     {\mdseries\slshape}%
     {\parindent}%
     {\bfseries}%
     {.}%
     {.5em}%
     {}%
   \newtheoremstyle{note}%
     {}%
     {}%
     {}%
     {\parindent}%
     {\bfseries}%
     {.}%
     {.5em}%
     {}%
   \newtheoremstyle{claim}%
     {}%
     {}%
     {\mdseries\slshape}%
     {}%
     {\bfseries}%
     {}%
     {.5em}%
     {}%
   \newtheoremstyle{exercise}%
     {}%
     {}%
     {}%
     {}%
     {\bfseries}%
     {.}%
     {1em}%
     {}%
   \newtheoremstyle{break}%
     {}%
     {}%
     {}%
     {}%
     {\bfseries}%
     {.}%
     {\newline}%
     {}%
     \theoremstyle{plain}
\newtheorem{The}{\TheName}[section]}%
      \theoremstyle{plain}
      \renewcommand{\Thecounter}{subsection}
      \newtheorem*{The*}{\TheName}
      \newtheorem*{Lem*}{\LemName}
      \newtheorem*{Pro*}{\ProName}
      \newtheorem*{Cor*}{\CorName}
      \newtheorem*{Pbm*}{\PbmName}
      \newtheorem*{Hyp*}{\HypName}
      \newtheorem*{Exe*}{\ExeName}
      \newtheorem*{Txx*}{\ExeName} %
      \newtheorem*{Con*}{Conclusion}
      \newtheorem*{Sum*}{Summary}
      \theoremstyle{claim}
      \theoremstyle{note}
      \newtheorem*{Obs*}{\ObsName}
      \newtheorem*{Def*}{\DefName}
      \newtheorem*{Exa*}{\ExaName}
      \newtheorem*{Alg*}{\AlgName}
      \theoremstyle{break}
     \providecommand{\@oltocline}{tocline}
     \newtheorem{OLThe}[subsection]{Theorem}%
     \crefname{OLThe}{theorem}{theorems}
     \newenvironment{The}[1][]{%
       \begin{OLThe}[#1]%
         \addcontentsline{toc}{subsection}{\thesubsection. \TheName\ifx|#1|\else\ - #1\fi}
     }{\end{OLThe}}
     \newaliascnt{Lem}{subsection}%
     \newtheorem{OLLem}[Lem]{Lemma}
     \crefname{OLLem}{lemma}{lemmata}
     \newenvironment{Lem}[1][]{%
       \begin{OLLem}[#1]%
         \addcontentsline{toc}{subsection}{\thesubsection. \LemName\ifx|#1|\else\ - #1\fi}
     }{\end{OLLem}}  
     \newaliascnt{Pro}{subsection}
     \newtheorem{OLPro}[Pro]{Proposition}
     \crefname{OLPro}{proposition}{propositions}
     \newenvironment{Pro}[1][]{%
       \begin{OLPro}[#1]%
         \addcontentsline{toc}{subsection}{\thesubsection. \ProName\ifx|#1|\else\ - #1\fi}
     }{\end{OLPro}}  
     \newaliascnt{Cor}{subsection}
     \newtheorem{OLCor}[Cor]{Corollary}
     \crefname{OLCor}{corollary}{corollaries}
     \newenvironment{Cor}[1][]{%
       \begin{OLCor}[#1]%
         \addcontentsline{toc}{subsection}{\thesubsection. \CorName\ifx|#1|\else\ - #1\fi}
     }{\end{OLCor}}  
     \newaliascnt{Def}{subsection}
     \theoremstyle{note}
     \newtheorem{OLDef}[Def]{Definition}
     \crefname{OLDef}{definition}{definitions}
     \newenvironment{Def}[1][]{%
       \begin{OLDef}[#1]%
         \addcontentsline{toc}{subsection}{\thesubsection. \DefName\ifx|#1|\else\ of #1\fi}
     }{\end{OLDef}}
     \newaliascnt{Obs}{subsection}
     \theoremstyle{note}
     \newtheorem{OLObs}[Obs]{Remark}
     \crefname{OLObs}{remark}{remarks}
     \newenvironment{Obs}[1][]{%
       \begin{OLObs}[#1]%
         \addcontentsline{toc}{subsection}{\thesubsection. \ObsName\ifx|#1|\else\ on #1\fi}
     }{\end{OLObs}}
     \newaliascnt{Sol}{subsection}
     \theoremstyle{note}
     \newtheorem{OLSol}[Sol]{Solution}
     \crefname{OLSol}{solution}{solutions}
     \newenvironment{Sol}[1][]{%
       \begin{OLSol}[#1]%
         \addcontentsline{toc}{subsection}{\thesubsection. \SolName\ifx|#1|\else\ on #1\fi}
     }{\end{OLSol}}
     \newaliascnt{Txx}{subsection}
     \theoremstyle{note}
     \newtheorem{OLTxx}[Txx]{\ExeName}
     \crefname{OLTxx}{exercise}{exercises}
     \newenvironment{Txx}[1][]{%
       \begin{OLTxx}[#1]%
         \addcontentsline{toc}{subsection}{\thesubsection. \ExeName\ifx|#1|\else\ - #1\fi}
     }{\end{OLTxx}}
     \newaliascnt{Exa}{subsection}
     \theoremstyle{note}
     \newtheorem{OLExa}[Exa]{\ExaName}
     \crefname{OLExa}{example}{examples}
     \newenvironment{Exa}[1][]{%
       \begin{OLExa}[#1]%
         \ifx|#1|%
         \addcontentsline{toc}{subsection}{\thesubsection. \ExaName}%
         \else%
         \addcontentsline{toc}{subsection}{\thesubsection. \ExaName\ of #1}%
         \fi
     }{\end{OLExa}}
     \newaliascnt{Alg}{subsection}
     \theoremstyle{note}
     \newtheorem{OLAlg}[Alg]{\AlgName}
     \crefname{OLExe}{exercise}{exercises}
     \newenvironment{Alg}[1][]{%
       \begin{OLAlg}[#1]%
         \ifx|#1|%
         \addcontentsline{toc}{subsection}{\thesubsection. \AlgName}%
         \else%
         \addcontentsline{toc}{subsection}{\thesubsection. \AlgName: #1}%
         \fi
     }{\end{OLAlg}}
     \newaliascnt{Pbm}{subsection}
     \theoremstyle{note}
     \newtheorem{OLPbm}[Pbm]{\PbmName}
     \crefname{OLPbm}{problem}{problems}
     \newenvironment{Pbm}[1][]{%
       \begin{OLPbm}[#1]%
         \addcontentsline{toc}{subsection}{\thesubsection. \PbmName\ifx|#1|\else\ - #1\fi}
     }{\end{OLPbm}}
     \crefname{Lem}{lemma}{lemmata}
     \crefname{Pro}{proposition}{propositions}
     \crefname{Def}{definition}{definitions}
     \crefname{Obs}{remark}{remarks}
     \crefname{Sol}{solution}{solutions}
     \crefname{Txx}{exercise}{exercises}
     \crefname{Exa}{example}{examples}
     \crefname{Pbm}{problem}{problems}
     \newenvironment{The}[1][]{%
       \ifx&#1&%
       \subsection{\TheName\xspace}%
       \else%
       \subsection[\MakeUppercase#1 theorem]{\TheName\ (#1)}%
       \fi%
       \slshape}{%
       \upshape}
     \newenvironment{Pro}[1][]{\subsection{\ProName\xspace{\ifx&#1&{}\else{ (#1)}\fi}}\slshape}{\upshape}
     \newenvironment{Lem}[1][]{\subsection{\LemName\xspace{\ifx&#1&{}\else{ (#1)}\fi}}\slshape}{\upshape}
     \newenvironment{Cor}[1][]{\subsection{\CorName\xspace{\ifx&#1&{}\else{ (#1)}\fi}}\slshape}{\upshape}
     \newenvironment{Txx}[1][]{\subsection{\ExeName\xspace{\ifx&#1&{}\else{ (#1)}\fi}}\slshape}{\upshape}
     \newenvironment{Pbm}[1][]{\subsection{\PbmName\xspace{\ifx&#1&{}\else{ (#1)}\fi}}\slshape}{\upshape}
     \newenvironment{Def}[1][]{\subsection{\DefName\xspace{\ifx&#1&{}\else{ of \indexen{#1}}\fi}}}{}
     \newenvironment{Obs}[1][]{\subsection{\ObsName\xspace{\ifx&#1&{}\else{ (#1)}\fi}}}{}
     \newenvironment{Exa}[1][]{\subsection{\ExaName\xspace{\ifx&#1&{}\else{ (#1)}\fi}}}{}
     \newenvironment{Alg}[1][]{\subsection{\AlgName\xspace{\ifx&#1&{}\else{ (#1)}\fi}}}{}
   \providecommand{\qed}{\vrule height 5pt depth 0pt width 3pt}
   \providecommand{\qqed}{{\raggedright{\ \hfill\qed}}}
   \newcounter{passo}
   \newenvironment{Proof}[1][]%
   {\par\noindent{\bf \Proofname\ifx|#1|.\ \else\ #1.\ \fi}\setcounter{passo}{0}}%
   {\qqed\par}
   {\par\noindent{\bf \Derivename\ #1}\setcounter{passo}{0}}%
   {\qqed\par}
   \newenvironment{Proof*}[1][{}]%
   {\subsection{\Proofname\ #1}\setcounter{passo}{0}}
   {\qqed\par}
\numberwithin{equation}{section}
\newcommand{\normalom}{\mathbf{n}}
\renewcommand\ltwop[3][]{\ensuremath{\qp{#2,#3}\ifx|#1|\else_{#1}\fi}}
\providecommand{\normalsymbol}{}
\renewcommand{\normalsymbol}{n}
\providecommand{\mmmathcal}{\mathcal{H}}
\providecommand{\mcf}{{\mathcal{F}}(c_\tau^-(t),u_\tau^-(t),u_\tau^+(t))}
\providecommand{\mcp}{(m(c^{k-1}))^+}
\providecommand{\mcm}{(m(c^{k-1}))^-}
\providecommand{\mcppwt}{(m(c_\tau^-(t)))^+}
\providecommand{\mcpp}{(m(c_\tau^-))^+}
\providecommand{\mc}{m(c_\tau^-)}
\providecommand{\upt}{u_\tau^{+}}
\providecommand{\umt}{u_\tau^{-}}
\providecommand{\cmt}{c_\tau^{-}}
\providecommand{\upwt}{u_\tau^{+}(t)}
\providecommand{\umwt}{u_\tau^{-}(t)}
\providecommand{\hop}{{\textsl{H}}_1(u^k,u^{k-1})}
\providecommand{\htp}{{\textsl{H}}_2(u^k,u^{k-1})}
\providecommand{\hopwo}{{\textsl{H}}_1(w_1,u^{k-1})}
\providecommand{\htpwo}{{\textsl{H}}_2(w_1,u^{k-1})}
\providecommand{\hopwt}{{\textsl{H}}_1(w_2,u^{k-1})}
\providecommand{\htpwt}{{\textsl{H}}_2(w_2,u^{k-1})}
\providecommand{\hopw}{{\textsl{H}}_1(w,u^{k-1})}
\providecommand{\htpw}{{\textsl{H}}_2(w,u^{k-1})}
\providecommand{\how}{h_1(w,u^{k-1})}
\providecommand{\htw}{h_2(w,u^{k-1})}
\providecommand{\hox}{h_1(\xi,u^{k-1})}
\providecommand{\htx}{h_2(\xi,u^{k-1})}
\providecommand{\hopa}{30u^{k-1}u^k(u^k-1)^2}
\providecommand{\htpa}{30(u^{k})^2(u^k-1)(u^{k-1}-1)}
\providecommand{\hoe}{h_1}
\providecommand{\hte}{h_2}
\providecommand{\hoa}{15r\Big(\frac{(s-1)^4}{4} + \frac{(s-1)^3}{3}\Big) + \frac{5r}{4}}
\providecommand{\hta}{15(r-1)\Big(\frac{s^4}{4} + \frac{s^3}{3}\Big) - \frac{5r}{4}}
\providecommand{\hope}{\textsl{H}_1}
\providecommand{\htpe}{\textsl{H}_2}
\newenvironment{theorem}[1][]{\begin{The}[#1]}{\end{The}}
\newenvironment{lemma}[1][]{\begin{Lem}[#1]}{\end{Lem}}
\newenvironment{remark}[1][]{\begin{Obs}[#1]}{\end{Obs}}
\providecommand{\normalsymbol}{}
\renewcommand{\normalsymbol}{n}
\providecommand{\mcf}{{\mathcal{F}}(c_\tau^-(t),u_\tau^-(t),u_\tau^+(t))}
\providecommand{\mcp}{(m(c^{k-1}))^+}
\providecommand{\mcm}{(m(c^{k-1}))^-}
\providecommand{\mcppwt}{(m(c_\tau^-(t)))^+}
\providecommand{\mcpp}{(m(c_\tau^-))^+}
\providecommand{\mc}{m(c_\tau^-)}
\providecommand{\upt}{u_\tau^{+}}
\providecommand{\umt}{u_\tau^{-}}
\providecommand{\cmt}{c_\tau^{-}}
\providecommand{\upwt}{u_\tau^{+}(t)}
\providecommand{\umwt}{u_\tau^{-}(t)}
\providecommand{\hopa}{30u^{k-1}u^k(u^k-1)^2}
\providecommand{\htpa}{30(u^{k})^2(u^k-1)(u^{k-1}-1)}
\providecommand{\cstar}{C^{\ast}}
\providecommand{\cstartwo}{C^\star}
\author{Alexandros Skouras}
\address{Institut Montpelliérain Alexander Grothendieck, Université de Montpellier, France}
\email{alskouras@gmail.com}
\author{Omar Lakkis}
\address{Department of Mathematics, University of Sussex, Brighton UK}
\email{lakkis.o.math@gmail.com}
\author{Vanessa Styles}
\address{Department of Mathematics, University of Sussex, Brighton UK}
\email{v.styles@sussex.ac.uk}
\title[PDE models of lithium batteries]{%
  Existence of solution to a system of PDEs modeling the crystal
  growth inside lithium batteries}
\date{\today}
\renewcommand{\normalsymbol}{\operatorname{\mathbf{n}}}
\begin{document}
\maketitle
\begin{abstract}
  We study a model for lithium (Li) electrodeposition on Li-metal electrodes that leads to dendritic pattern formation. The model comprises of a system of three coupled PDEs, taking the form of an Allen--Cahn equation, a Nernst--Planck equation and a Poisson equation. We prove existence of a weak solution and stability
  results for this system and present numerical simulations resulting from a finite element approximation of the system, which illustrate the dendritic nature of solutions to the model.. 
\end{abstract}

\section{Introduction}
\label{sec1}
As humanity aims at reaching ``net-zero'' by 2050, replacing fossil
fuels with renewable alternatives has spurred research in storage
technologies, with lithium batteries being one of the favoured
technologies.  Electrodeposition in lithium batteries is a phenomenon
that plays an important role in studying lithium batteries, in that it
may lead to solid dendritic structures which, whilst remaining
attached to the electrode, grow into the electrolyte solution. Over
time these structures can seriously compromise the performance, the
lifespan as well as the safety of such batteries, see for example
\citet{okajimaPhasefieldModelElectrode2010%
  ,akolkarMathematicalModelDendritic2013%
  ,liangNonlinearPhaseField2014%
  ,chenModulationDendriticPatterns2015%
  ,yurkivPhasefieldModelingSolid2018}
and the references therein.

In this paper we analyse a phase-field model for lithium (Li) electrodeposition on
Li-metal electrodes in high-capacity lithium-oxygen and
lithium-sulphur batteries. The model we consider is a variant of the
thermodynamically consistent model introduced in
\citet{chenModulationDendriticPatterns2015} to predict dendritic
patterns which form during an electrochemical process using
Li-electrodeposition. For ease of presentation we consider a
simplified version in which, where possible, physical parameters have
been set to $1$;
\changes{namely, we look for functions $u,c,\phi$
  of $(x,t)=(x_1,\dotsc,x_d,t)\in\W\times T, d=2,3$
  such that }
\begin{equation}
  \begin{aligned}\label{m1}
    \partial_t u = \nabla\cdot [\mat A(\nabla u)\nabla u] - g'(u) + m(c) h'(u) ~~~ \text{ in } \Omega\times [0,T],
  \end{aligned}
\end{equation}
\begin{equation}
  \begin{aligned}\label{m2}
    \partial_t c = \nabla\cdot \big [D(u)\nabla c + D_1(u,c)\nabla\phi \big ] - \partial_t u ~~~ \text{ in } \Omega\times [0,T],  
  \end{aligned}
\end{equation}
\begin{equation}
  \begin{aligned}\label{m3}
    \nabla\cdot \big [\sigma(u)\nabla\phi \big ]
    =  \partial_t u  -
    \sigma'(u)\changes{\partial_{x_1} u}  ~~~
    \text{ in } \Omega\times [0,T], 
  \end{aligned}
\end{equation}
together with appropriate boundary and initial data.  Here $c$
represents the ion concentration of \ch{Li+}, $\phi$ denotes the
electrostatic potential and $u$ is a phase field variable that has a
physical correspondence to the concentration of Li atoms. The specific
forms of the anisotropic tensor $\mat A(u)$ and the functions $g(u)$,
$m(c)$, etc. are defined in Section \ref{sec:NPPAC}.

An early application of phase-field modelling to electrochemistry was
introduced in \citet{guyerModelElectrochemicalDouble2002} in which a
new model was proposed and derived using a free energy functional that
includes the electrostatic effect of charged particles leading to rich
interactions between concentration, electrostatic potential and phase
stability. This model was further studied by the same authors in
\citet{guyerPhaseFieldModeling2004a,
  guyerPhaseFieldModeling2004}. There is extensive literature on
extensions of the model proposed in
\citet{guyerModelElectrochemicalDouble2002} to models taking similar
forms to \eqref{m1}--\eqref{m3}, namely a Nernst--Planck--Poisson
system coupled with an anisotropic phase-field equation,
e.g. \citet{okajimaPhasefieldModelElectrode2010,
  liangNonlinearPhasefieldModel2012,akolkarMathematicalModelDendritic2013,
  chenModulationDendriticPatterns2015,
  yurkivPhasefieldModelingSolid2018,
  muNumericalSimulationFactors2019}. In these works numerical
computations are presented that simulate the evolution of dendritic
structures in \ch{Li}-ion or \ch{Li}-metal batteries.  Works
presenting numerical simulations of anisotropic phase field models go
back many decades, see for example
\citet{kobayashiNumericalApproachThreeDimensional1994%
  ,karmaQuantitativePhasefieldModeling1998%
  ,mcfaddenPhasefieldModelsAnisotropic1993%
  ,wheelerXivectorFormulationAnisotropic1996%
  ,graserTimeDiscretizationsAnisotropic2013}.%
While mathematical analysis on the anisotropic phase-field model was
developed in
\citet{elliottLimitAnisotropicDoubleobstacle1996%
  ,elliottLimitFullyAnisotropic1997%
  ,taylorDiffuseInterfacesSharp1998,graserTimeDiscretizationsAnisotropic2013}.

In this paper we adapt techniques from
\citet{burmanExistenceSolutionsAnisotropic2003} in which the authors
establish an existence result for weak solutions to a simplified
version of the model (\ref{m1})--(\ref{m3}). Specifically, the model
studied in \citet{burmanExistenceSolutionsAnisotropic2003}, which does
not include an equation of type \eqref{m3}, is a system of two
equations that take similar forms to \eqref{m1} and \eqref{m2}. There
are two key differences between the forms of \eqref{m1} and \eqref{m2}
in this paper and their respective forms in
\citet{burmanExistenceSolutionsAnisotropic2003}, firstly, in this work
the $\partial_t u$ term in \eqref{m2} is not included in
\citet{burmanExistenceSolutionsAnisotropic2003} and secondly, the
$\grad \phi$ advection term in \eqref{m2} in this work is replaced by
a $\grad u$ advection term in
\citet{burmanExistenceSolutionsAnisotropic2003}.

The remainder of the paper is organised as follows. In Section
\ref{sec2} we present the full model together with a weak formulation,
then we present the main result, which is an existence result, using
Rothe's method, for solutions to the weak formulation.  We prove the
existence result by considering, in Section \ref{sec3}, a time
discretization of the weak formulation and, in Section \ref{sec4},
taking the limit as the time step $\tau$, of the dicretization, tends
to zero.  We conclude with Section \ref{sec5} in which we present a
finite element discretisation of the model together with some
numerical examples that illustrate the dendritic nature of solutions
to the model.

We conclude this section with some comments on notation. We denote the
$\leb 2$-inner product on $\Omega$ by $(\cdot,\cdot)$ and adopt the
standard notation for Sobolev spaces, denoting the norm of $\sob lp
(\Omega)$ $(l\in\mathbb{N},p\in[1,\infty])$ by $\|\cdot\|_{l,p}$ and
the semi-norm by$|\cdot |_{l,p}$.  For $p=2$, $\sob l 2(\Omega)$ will
be denoted by $\sobh l(\Omega)$ with the associated norm and semi-norm
written, as respectively, $\|\cdot\|_{l}$ and $|\cdot |_{l}$. We
denote the dual space of $\sobh 1(\Omega)$ by $(\sobh 1(\Omega))'$. In
addition, we adopt the standard notation $\sob lp(a,b;X), (l\in
\mathbb{N}, p\in[1,\infty], (a,b)$ an interval in $\mathbb{R}$, $X$ a
Banach space) for time dependent spaces with norm $\|\cdot\|_{\sob
  lp(a,b;X)}$. Once again, we write $\sobh l(a,b;X)$ if $p=2$. For a
Hilbert Sobolev space with Dirichlet boundary condition on
$\Sigma\subseteq\partial Q$ we write $\sobhz[\Sigma] m(Q)$ and $\sobhz
m(Q)$ if $\Sigma=\partial Q$.  Lastly, $C$ denotes a generic constant
that may change from line to line.

\section{Weak formulation of the model}
\label{sec2}
\subsection{The model}%
\label{sec:NPPAC}
The model, \eqref{m1}--\eqref{m3}, takes the form of three interacting PDEs:
\begin{enumerate}[(a)\ ]
\item[\eqref{m1}]
  a nonlinear anisotropic {Allen--Cahn} type equation, for a 
  {phase-field} variable $u$, with a forcing term, $m(c)h'(u)$, that accounts
  for the Butler--Volmer electrochemical reaction kinetics, which
  model the concentration of lithium atoms;
\item[\eqref{m2}]
  a reaction--diffusion {Nernst--Planck} type equation, which
  describes the dynamics of the concentration, $c$, of  lithium ions;
\item[\eqref{m3}]
  a {Poisson} type equation that describes the electro static 
  potential, $\phi$, that drives the dynamics of the lithium ions.
\end{enumerate}
For completeness we now rewrite the model together with the boundary and initial data:
\begin{equation}
  \begin{aligned}\label{orderparameterrescaled}
    \partial_t u = \nabla\cdot [\mat A(\nabla u)\nabla u] - g'(u) + m(c) h'(u) ~~~ \text{ in } \Omega\times [0,T],
  \end{aligned}
\end{equation}
\begin{equation}
  \begin{aligned}\label{concentrationrescaled}
    \partial_t c = \nabla\cdot \big [D(u)\nabla c + D_1(u,c)\nabla\phi \big ] - \partial_t u ~~~ \text{ in } \Omega\times [0,T],  
  \end{aligned}
\end{equation}
\begin{equation}
  \begin{aligned}\label{electricpotentialrescaled}
    \nabla\cdot \big [\sigma(u)\nabla\phi \big ]
    =  \partial_t u  -
    \sigma'(u)\changes{\partial_{x_1} u}  ~~~
    \text{ in } \Omega\times [0,T], 
  \end{aligned}
\end{equation}
\begin{equation}\label{boundary_data}
  \mat A(\nabla u)\nabla u\cdot \normalom= 0,~~ D(u)\nabla c \cdot \normalom= D_1(u,c)\grad\phi \cdot \normalom~~~ \text{ on } \partial\Omega\times [0,T],
\end{equation}
\begin{equation}\label{boundary_data+phi}
  \sigma (u)\nabla\phi \cdot \normalom=  0  \text{ on } \Gamma_3\times [0,T],  ~  \phi = \phi_-
  \text{ on } \Gamma_1\times [0,T],~
  \phi = 0   \text{ on } \Gamma_2\times [0,T], 
\end{equation}
\begin{equation}\label{initial_data}
  u(0) = u_0,~~c(0) = c_0 \text{ in } \Omega.
\end{equation}
Here $[0,T]$ is the time interval, the domain $\Omega = [0,L_1]\times [0,L_2]^{d-1}\subset\mathbb{R}^d$, $d=2,3$,  is bounded and convex with Lipschitz boundary $\partial\Omega = \Gamma_1\cup\Gamma_2\cup\Gamma_3$, such that
\begin{gather}
  \Gamma_1:=\setofsuch{\vec{x}\in\partial\Omega}{x_1= 0},~
  \Gamma_2:=\setofsuch{\vec{x}\in\partial\Omega}{x_1 = L_1}
  \tand
  \Gamma_3 =
  \partial\Omega\take(\Gamma_1\join\Gamma_2),
\end{gather}

\begin{equation}\label{doublewell}
  g(u) :=
  u^2(1-u)^2/4, ~h(u) := u^3(6u^2-15u+10),
\end{equation}
\begin{equation}\label{Dsigma}
  D(u) := D^\text{e}h(u) + D^\text{s}(1-h(u)),~~  \sigma(u) =  \sigma^\text{e}h(u)  +
  \sigma^\text{s}(1-h(u)),~
  \text{ for }  u\in\mathbb{R},
\end{equation}
\begin{equation}\label{diffusionoftransport}
  m(c) := \left\{ \begin{array}{ll}
    C_1 - C_2,& c > 1,\\
    C_1 - cC_2,& 0\leq c\leq 1,\\
    C_1,& c < 0,
  \end{array}
  \right. 
  ~~D_1(u,c) := \left\{
  \begin{array}{ll}
    D(u), & c > 1, \\
    D(u)c, & 0\leq c\leq 1,\\
    0, & c< 0,
  \end{array}
  \right.
\end{equation}
with $\D^e,D^s,\sigma^e,\sigma^s\in \mathbb{R}$ and
$0<D^{\text{s}}<D^{\text{e}}$,
$0<\sigma^{\text{e}}<\sigma^{\text{s}}$, \changes{and given constants
  $C_1,C_2 > 0$}.  The anisotropic diffusion tensor-valued field $\mat
A(\vec{p})$ of $\vec{p}\in\mathbb{R}^d\takeset{\vec0}$ is defined in
Section \ref{sec:anisotropictensor}.

\changes{Note that the term
  $\sigma'(u)\partial_{x_1}u$ in~(\ref{electricpotentialrescaled})
  appears from incorporating the boundary conditions for the potential
  $\phi$ in the equation.  Keeping this term in (\ref{electricpotentialrescaled}) allows for
  simpler boundary conditions in the analysis, but in the numerical
  simulations, see Section \ref{sec5}, it will be added to the boundary
  conditions.  }

\begin{remark}[cut-off of constitutive functions]
  In \citet{chenModulationDendriticPatterns2015} the constitutive functions
  $D_1(u,c)$ and $m(c)$ are defined, for all $u,c\in \mathbb{R}$, to
  be $D_1(u,c)=D(u)c$ and $m(c) = C_1-cC_2$ respectively.  However to
  establish the analytical results in this paper, we consider their
  cut off variants defined in \ref{diffusionoftransport}.
\end{remark}

\subsection{The weak formulation}\label{sec:weakformulation}

A weak formulation of  \eqref{orderparameterrescaled}-\eqref{initial_data} is given as follows: 

Let $u(0) = u_0 \in \sobh 1(\Omega)$ and $c(0) = c_0 \in \leb 2(\Omega)$. Then, find $(u,c,\phi)$ 

such that for all $(\chi, \eta)\in \sobh 1(\Omega)\times \sobhz[\Gamma_1\cup\Gamma_2] 1(\Omega)$, 
\begin{subequations}
  \begin{align}
    \label{orderparameterfinalform}
    (\partial_t u,\chi) + (\mat A(\nabla u)\nabla u,\nabla \chi) +( g'(u),\chi) = ( m(c) h'(u),\chi),\\
    \label{concentrationfinalform}
    (\partial_t c,\chi) + (D(u)\nabla c,\nabla \chi) + (D_1(u,c)\nabla\phi,\nabla \chi = (\partial_t u,\chi),\\
    \label{electricpotentialfinalform}
    (\sigma(u)\nabla\phi,\nabla \eta) = ( - \partial_t u + \sigma'(u)\changes{\partial_{x_1} u},\eta),
  \end{align}
\end{subequations}
where 
\begin{equation}\label{primes}
  g'(s) = 
  s(s-1)(s-\frac{1}{2}),~
  h'(s) = 
  30s^2(s-1)^2
\end{equation}
and
\begin{equation}\label{sigmaprime}
  \sigma'(s) = \
  30(\sigma_e-\sigma_s)s^2(s-1)^2.
\end{equation}

\subsection{Anisotropic diffusion tensor}\label{sec:anisotropictensor}
We define the tensor-valued field $\mat A(\vec{p})$, $\vec{p}\in\mathbb{R}^d\takeset{\vec0}$ in \eqref{m1}, which models the anisotropy of the growing crystals due to the cubic crystalline structure of lithium, by
\begin{multline}\label{anisotropy_tensor}
  \mat A(\vec p) = a(\vec{p})
  \nabla a(\vec{p})\transposevec{p}
  +
  a^2(\vec{p})\eye
  \\
  =16\, \delta \, a(\vec p)\normpow{\vec p}{-6}
  \sqmatskelijn{p_i p_j({p_i}^2\normpow{\vec p}2-\sum_{k=1}^d {p_k}^4)}ijd
  + a^2(\vec{p})\eye,
\end{multline}
where 
\begin{equation}\label{anisotropy_function}
  a(\vec{p}) = a_0 (1-3\delta) \qp{1 + \frac{4\delta}{1-3\delta}
    \frac{\sum_{j=1}^d\ppow{\vecentry pj}4}{\normpow{\vec p}4}},
\end{equation} 
for some material dependent parameters $a_0, \delta > 0$. Here $\mat
A(\vec{p})$, $\vec{p}\in\mathbb{R}^d\takeset{\vec0}$ arises from the
(Fréchet) derivative, $J_a'$, of the anisotropic Dirichlet energy,
\begin{equation}
  \label{anisotropicdirichletenergy}
  J_a(w)
  :=
  \frac12\int_{\Omega}
  a^2(\nabla w)|\nabla w|^2
\end{equation} 
such that 
$J_a': \sobh 1(\Omega)\to(\sobh 1(\Omega))', ~\forall v\in\sobh 1(\Omega)$, satisfies 
\begin{equation}\label{anisotropicdirichletenergyderivative}
  J_a'(w)[v] = \int_{\Omega} \mat A(\nabla w)\nabla w\cdot \nabla v,~~\forall v\in\sobh 1(\Omega).
\end{equation}

\begin{lemma}[convexity of the anisotropic Dirichlet energy]
  \label{lem:hessianpositive}
  Recalling \eqref{anisotropy_function}, for $\delta < 1/15$ , the
  functional
  \begin{equation}
    j_a(\vec p):= \frac12\int_{\Omega} a^2(\vec p)|\vec p|^2
  \end{equation}
  is strictly convex in $\vec p$ for all $\vec
  p\in (\leb 2 (\Omega))^d$.
  It follows that the anisotropic Dirichlet energy
  \(
  J_a(w)
  \) defined in (\ref{anisotropicdirichletenergy})
  is strictly convex in $w\in\sobh1(\Omega)$.
\end{lemma}
\begin{proof}
  The result is proved in
  \citet{burmanExistenceSolutionsAnisotropic2003} for $d=2$. Here we
  extend it to the case $d=3$.  A sufficient condition for a
  functional to be strictly convex is the positivity of the Hessian
  matrix, \citet{dacorognaDirectMethodsCalculus2008}, i.e. we require
  \begin{equation*}
    \mmmathcal(\vec p) = \nabla^2 \bar{a}^2(\vec p) > 0,~~\forall \vec p \in \operatorname{Dom}\bar{a}^2,
  \end{equation*}
  where $\bar{a}:\mathbb{R}^d\to\mathbb{R}$ is given by
  \begin{equation}\label{anisotropy_function777}
    \bar{a}(\vec p) := \frac{\sqrt{2} a(\vec p)}{2}|\vec p|
  \end{equation} 
  such that 
  \begin{equation}\label{anisotropicdirichletenergy777}
    j_a(\vec p)
    =
    \frac12\int_{\Omega}
    a^2(\vec p)|\vec p|^2
    =
    \int_{\Omega}
    \bar{a}^2(\vec p).
  \end{equation} 
  By Sylvester's criterion, a Hessian matrix is positive definite if
  all of the principal minors are positive.  By symmetry in
  $\colvecithree p$, it is sufficient to check that
  $\mmmathcal_{11}(\vec p)$, the upper $2\times 2$ matrix $\mmmathcal_{1:2,1:2}(\vec p)$ and $\mmmathcal(\vec p)$ itself have a positive
  determinant.
  
  With the use of software Wolfram Mathematica \citep{Mathematica}, our calculations show that $\mmmathcal_{11}$ is positive if and only if
  \begin{equation}
    \begin{aligned}
      1 + 10\delta - 87\delta^2 > 0, ~~1 + 6\delta - 27\delta^2 > 0, ~~ 1 - 10\delta + 21\delta^2 > 0,\\
      1 - 14\delta - 15\delta^2 > 0, ~~3 - 26\delta + 3\delta^2 > 0, ~~ 5 + 22\delta - 15\delta^2 > 0.
    \end{aligned}
  \end{equation}
  The above inequalities hold for $\delta < 1/15$, while the
  determinant of the upper $2\times 2$ matrix is positive if and only
  if
  \begin{equation}
    1 - 15\delta > 0, ~~1 + 6\delta - 315\delta^2 > 0, ~~ 1 +18\delta -15\delta^2 > 0,
  \end{equation}
  which are statisfied if and only if $\delta < 1/15$. Lastly, for the
  determinant of the Hessian matrix we have
  \begin{equation}
    \det (\mmmathcal(\vec{p})) > 0 \Leftrightarrow 
    1 + 6\delta - 315\delta^2 > 0 
    \text{ and }
    1 + 18\delta - 15\delta^2 > 0,
  \end{equation}
  which also holds for $\delta < 1/15$.	

  The convexity of $J_a$ follows from restricting $j_a$ to the linear
  subspace of $\leb2(\W)$ comprising all gradients of $\sobh1(\Omega)$
  functions.
\end{proof}

\begin{remark}[monotonicity]
  \label{monotone}
  Since the anisotropic Dirichlet energy
  \eqref{anisotropicdirichletenergy} is Fréchet differentiable and
  strictly convex it follows \citep[from Pro
    7.4][e.g.]{showalterMonotoneOperatorsBanach2013} that the
  mapping $w\in\sobh 1(\Omega)\mapsto J_a'(w)\in(\sobh 1(\Omega))'$ is
  monotone and hemicontinuous. Thus we have
  \begin{equation}\label{convexineq}
    J_a(w) - J_a(v) \leq J_a'(w)[w - v]
    =
    \int_{\Omega}
    \changes{\nabla w \cdot \mat A(\nabla w)
    \nabla (w - v)}.
  \end{equation}
\end{remark}

\begin{remark}[ellipticity]
  Noting \citet[Lem 2.2]{graserConvexMinimizationPhase} we have that 
  \begin{equation}
    (\mat A(\vec p)\vec p,\vec p) = (\nabla(\bar{a}^2(\vec p)),\vec p) = 2\int_\Omega\bar{a}^2(\vec p),~~\forall \vec p \in\mathbb{R}^d
  \end{equation}
  thus from  \eqref{anisotropy_function} and \eqref{anisotropy_function777} we
  derive the bounds
  \begin{equation}\label{anisotropicbounds}
    c_A|w|^2_{\sobh 1(\Omega)}
    \leq
    \int_{\Omega}\norm{\mat A(\nabla w)}|\nabla w|^2
    \leq C_A|w|^2_{\sobh 1(\Omega)}
  \end{equation}
  with
  \begin{equation}\label{abounds}
    c_A:= \min_{\vec{p}\in \mathbb{R}^d} a^2(\vec{p})~~\text{and}~~C_A:=\max_{\vec{p}\in \mathbb{R}^d}  a^2(\vec{p}).
  \end{equation}
\end{remark}

\begin{theorem}[existence of solutions]
  \label{thmexistence}
  For $\delta<1/15$, recall \eqref{anisotropy_function}, initial data $(u_0,c_0)\in \sobh 1(\Omega)\times \leb 2(\Omega)$ and $T>0$, there exist 
  \begin{equation*}
    u\in \leb 2([0,T];\sobh 1(\Omega))\cap \sobh 1([0,T];\leb 2(\Omega))\cap \leb {\infty}([0,T];\leb \infty(\Omega)),
  \end{equation*}
  \begin{equation*}
    c\in \leb 2([0,T];\sobh 1(\Omega))\cap \sobh 1([0,T];(\sobh 1(\Omega))')\cap \leb {\infty}([0,T];\leb 2(\Omega))
  \end{equation*}
  and
  \begin{equation*}
    \phi\in \leb 2([0,T]; \sobhz[\Gamma_1\cup\Gamma_2]1(\Omega))
  \end{equation*}
  such that $(u,c,\phi)$ satisfies $u(0)=u_0$, $c(0) = c_0$ and
  \eqref{orderparameterfinalform}--\eqref{electricpotentialfinalform}
  for almost every $t\in[0,T]$ and every
  $(\chi,\eta)\in\sobh1(\Omega)\times\sobhz[\Gamma_1\cup\Gamma_2]1(\Omega)$.
\end{theorem}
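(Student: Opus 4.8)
The plan is to follow Rothe's method: semi-discretize in time, solve a decoupled cascade of stationary problems at each step, derive a priori bounds uniform in the time step, and pass to the limit by compactness, reserving the monotonicity of the anisotropic operator $J_a'$ for the one genuinely nonlinear gradient term. Fix $N$, set $\tau = T/N$, $t_k = k\tau$, and write $d_t v^k := (v^k - v^{k-1})/\tau$. Given $(u^{k-1},c^{k-1})$ I would solve three problems in turn. First the Allen--Cahn step: find $u^k\in\sobh1(\Omega)$ with
\[
\epsilon^2 (d_t u^k, v) + (\mat A(\nabla u^k)\nabla u^k, \nabla v) + \gamma(g'(u^k), v) = (m(c^{k-1}) h'(u^k), v)
\]
for all $v\in\sobh1(\Omega)$; existence follows from the theory of bounded, monotone, hemicontinuous, coercive operators (Browder--Minty), coercivity coming from \eqref{anisotropicbounds} and the zeroth-order perturbations $g',h',m$ being bounded and Lipschitz by construction. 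With $u^k$ and $d_t u^k$ then known, the Poisson step $(\sigma(u^k)\nabla\phi^k,\nabla\eta)=(f^k,\eta)$, $f^k=-\nu_1 d_t u^k+\nu_2\sigma'(u^k)\partial_x u^k$, is linear and coercive since $\sigma(u^k)$ is bounded below by a positive constant, so Lax--Milgram applies on $\sobhz[\Gamma]1(\Omega)$. Finally the Nernst--Planck step for $c^k$, with the convection coefficient $D_1$ frozen at $(u^k,c^{k-1})$ to keep it linear, is coercive because $D(u^k)\ge D_{\min}>0$, and Lax--Milgram again yields $c^k$.

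\textbf{A priori estimates.} Testing the Allen--Cahn step with $d_t u^k$ and using the convexity inequality \eqref{convexineq} in the form $(\mat A(\nabla u^k)\nabla u^k,\nabla(u^k-u^{k-1}))\ge J_a(u^k)-J_a(u^{k-1})$, the leading term telescopes after summation, while the double-well and forcing contributions are absorbed via the Lipschitz bounds on $g',h',m$ and Young's inequality. This controls, uniformly in $\tau$, both $\tau\sum_k\|d_t u^k\|_{\leb2(\Omega)}^2$ and $\max_k\|u^k\|_{\sobh1(\Omega)}$ (through coercivity \eqref{anisotropicbounds}). A Stampacchia truncation argument — testing with $(u^k-M)_+$ and noting that $g'$ and $h'$ vanish outside $[0,1]$, so that the step reduces there to pure anisotropic diffusion — propagates an $\leb\infty(\Omega)$ bound from the data and delivers the $\leb\infty([0,T];\leb\infty(\Omega))$ regularity. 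Testing Poisson with $\phi^k$ then bounds $\tau\sum_k\|\phi^k\|_{\sobh1(\Omega)}^2$ in terms of the already-controlled $d_t u^k$ and $\partial_x u^k$. For the concentration I would test Nernst--Planck with $c^k$ (for $\max_k\|c^k\|_{\leb2(\Omega)}$ and $\tau\sum_k\|c^k\|_{\sobh1(\Omega)}^2$) and, exploiting $c_0\in\sobh1(\Omega)$, with $d_t c^k$; handling the $k$-dependence of $D(u^k)$ through its Lipschitz continuity and the control on $\|u^k-u^{k-1}\|$, this yields $\tau\sum_k\|d_t c^k\|_{\leb2(\Omega)}^2\le C$, i.e. $\partial_t c\in\leb2([0,T];\leb2(\Omega))$.

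\textbf{Passage to the limit.} Introducing the piecewise-constant and piecewise-affine time interpolants of $\{u^k\},\{c^k\},\{\phi^k\}$, the estimates give weak(-$*$) limits in the stated spaces, and Aubin--Lions--Simon upgrades the interpolants of $u$ and $c$ to strongly convergent sequences in $\leb2([0,T];\leb2(\Omega))$ with a.e. convergence. This lets me pass to the limit in every continuous nonlinearity $g'(u),h'(u),m(c),\sigma(u),D(u),D_1(u,c)$ by dominated convergence. The only term not covered this way is $\mat A(\nabla u_\tau)\nabla u_\tau$, and this is the most delicate point: I would identify its weak limit with $\mat A(\nabla u)\nabla u$ by Minty's trick, using that $J_a'$ is monotone and hemicontinuous together with the strong $\leb2$-convergence of $u_\tau$.

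\textbf{Uniqueness and main obstacle.} Uniqueness follows by subtracting two solutions and testing each equation with the corresponding difference: monotonicity of $\mat A$ controls the phase-field equation, the $\leb\infty$ bound on $u$ and the Lipschitz bounds on the coefficients control the lower-order and coupling terms, and Gronwall's inequality closes the estimate. The delicate point is the convection difference $D_1(u,c)\nabla\phi$, whose terms must be absorbed using the diffusion coercivity and the $\leb2([0,T];\sobh1(\Omega))$ bound on $\phi$, since $\nabla\phi$ is only square-integrable. I expect the limit passage in $\mat A(\nabla u_\tau)\nabla u_\tau$ and this convection term in the uniqueness estimate to be the principal obstacles.
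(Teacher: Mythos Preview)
Your plan is essentially the paper's: Rothe's method with exactly the same decoupled cascade (Allen--Cahn first, then Poisson with $u^k$ known, then Nernst--Planck with $D_1$ frozen at $(u^k,c^{k-1})$), the same energy estimates from testing with the solution and its discrete time derivative, the same Stampacchia-type truncation for the $L^\infty$ bound on $u^k$, Aubin--Lions compactness for the lower-order nonlinearities, and Minty's monotonicity trick for the limit in $\mat A(\nabla u_\tau)\nabla u_\tau$. Two minor technical differences are worth noting. First, for existence of $u^k$ you invoke Browder--Minty, whereas the paper obtains $u^k$ as a minimizer of the discrete energy $I_k(w)=\frac{\epsilon^2}{2\tau}\|w-u^{k-1}\|_{L^2}^2+J_a(w)+\gamma\int g(w)-\int m(c^{k-1})h(w)$ via the direct method; both routes work (and both need $\tau$ small for uniqueness). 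Second, for the bound $\tau\sum_k\|d_t c^k\|_{L^2}^2\le C$ you test with $d_t c^k$ and plan to handle the $k$-dependence of $D(u^k)$ by Lipschitz continuity, whereas the paper simply reads off $\|d_\tau c^k\|$ from the weak form by the $L^2$ norm of the flux; if you pursue your test, watch the convection contribution $(D_1\nabla\phi^k,\nabla d_t c^k)$, since absorbing it by Young leaves $\sum_k|\phi^k|_{H^1}^2$ without a factor of $\tau$, which is not controlled by the Poisson estimate. Finally, your Gronwall sketch for uniqueness goes beyond the paper, which only argues uniqueness of the weak subsequential limits rather than a genuine solution-level uniqueness estimate.
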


We split the proof of Theorem \ref{thmexistence} in two parts. The
first part, which is presented in Section \ref{sec3}, consists of
proving existence of a solution to a semi-implicit Euler time
discretization of
\eqref{orderparameterfinalform}--\eqref{electricpotentialfinalform}
and deriving stability bounds on this solution. In the second part,
presented in Section \ref{sec4}, we show that in the limit
as the time step $\tau$ of the semi-discrete approximation tends to
zero, there exists a solution that solves the weak formulation
(\ref{orderparameterfinalform})--(\ref{electricpotentialfinalform}).

\section{Time discretization}\label{sec3}

In this section we study a time discretization of
\eqref{orderparameterfinalform}--\eqref{electricpotentialfinalform}.
To this end, we partition the time interval $[0,T]$ into $K$
equidistant steps, $0 = t_0 < t_1 < \ldots < t_{K-1} < t_K = T$, so
that for $k=1,\ldots,K$, $t_k = k \tau$.

We use the backward difference quotient,
$d_{\tau} w^k =(w^k-w^{k-1})/\tau$ to obtain the following discretization of
\eqref{orderparameterfinalform}--\eqref{electricpotentialfinalform} in
which we have linearized the convection term in
\eqref{concentrationfinalform} and derived a specific form of the
right hand side of \eqref{orderparameterfinalform} that lends itself
to proving the boundedness of $u^k$ (see Lemma
\ref{lemmaxprin}).\\ For all $(\chi,\eta)\in \sobh 1(\Omega)\times
\sobhz[\Gamma_1\cup\Gamma_2]1(\Omega)$ and for all $k\geq
1,k\in\mathbb{N}$
\begin{subequations}
  \begin{align}
    \label{discretisedorderparameter}
    ( d_{\tau} u^k,\chi) + (\mat A(\nabla u^k)\nabla u^k,\nabla \chi) +( g'(u^{k}),\eta) = (\mcm\hop,\chi) \nonumber\\
    +(\mcp\htp,\chi),\\
    \label{discretisedelectricpotential}
    (\sigma(u^{k})\nabla\phi^k,\nabla \eta) = ( -d_{\tau} u^{k} + \sigma'(u^{k})\changes{\partial_{x_1} u^{k}},\eta),\\
    \label{discretisedconcentration}
    ( d_{\tau} c^k,\chi) + (D(u^{k})\nabla c^k,\nabla \chi) = - (d_{\tau} u^{k},\chi) - (D_1(u^{k},c^{k-1})\nabla\phi^{k},\nabla \chi), 
  \end{align}
\end{subequations}
with $u^0 = u(0)=u_0$, $c^0 = c(0)=c_0$, $(\xi)^- := \min (0,\xi)$, $(\xi)^+ := \max (0,\xi)$, 
\begin{equation}\label{hhhh}
  \changes{\hop:=\hopa,~~\htp:=\htpa.}
\end{equation}

\changes{
	\begin{remark}[Right-hand side of \eqref{discretisedorderparameter}]
		We can retrieve $h'(s)$ as defined in \eqref{primes} by setting $r=s$ in $\hope(s,r)$ and $\htpe(s,r)$ such that  
		\begin{equation}
			h'(s) = \frac12\big(\hope(s,s) + \htpe(s,s)\big).
		\end{equation}
		Associated with $\hope(s,r)$ and $\htpe(s,r)$ are two functions $\hoe,\hte:\mathbb{R}^2\to\mathbb{R}$, which are antiderivatives in $s$ of $\hope$ and $\htpe$, such that %
		\begin{equation}\label{eqdfg}
		\begin{gathered}
			\hoe(s,r) := \hoa\\
			\hte(s,r) := \hta %
			 \end{gathered}
		\end{equation}
		where, noting \eqref{doublewell}, we have
		\begin{equation}
			h(s) = \hoe(s,s) + \hte(s,s).
		\end{equation}
	\end{remark}
	} 

Note that the equations in the system \eqref{discretisedorderparameter}--\eqref{discretisedconcentration} are not simultaneous, but will be solved in sequential order, first the phase-field equation \eqref{discretisedorderparameter} is solved with given $c^{k-1}$ to obtain $u^k$, then \eqref{discretisedelectricpotential} is solved with given $u^k$ to obtain $\phi^k$ and finally the convection-diffusion equation \eqref{discretisedconcentration} is solved with given $u^k$ and $\phi^k$ to obtain $c^k$.\\

We make the following assumptions on the time step $\tau$, %
anisotropy parameter $\delta$, defined in \eqref{anisotropy_function},
and the initial data $u_0,c_0$,
\begin{equation}
  \label{assump}
  \begin{gathered}
    \displaystyle{0<\tau<\min\left(\fracl12, \fracl1{\cstartwo},\fracl1{4\cstar}\right)},
    \\
    \delta<\fracl1{15},
    \\
    0 \leq u_0 \leq 1, u_{0}\in \sobh 1(\Omega), c_{0}\in \leb 2 (\Omega).
  \end{gathered}
\end{equation}
Here $\cstartwo$ and $\cstar$ are defined by the bounds in
\eqref{cstartwo} and \eqref{cstar} respectively.

\begin{theorem}[existence and uniqueness of a time-discrete solution]
  \label{thmdiscreteexistence}
  Let assumptions \eqref{assump} hold, then 
  there exists a unique triple 
  \begin{equation}
    (u^k,c^k,\phi^k)
    \in
    (\sobh 1 (\Omega))^2\times\sobhz[\Gamma_1\cup\Gamma_2] 1(\Omega),
  \end{equation}
  with $k=1,...,K= \lceil T/\tau \rceil$, satisfying the $k$-th step of \eqref{discretisedorderparameter}--\eqref{discretisedconcentration}.
\end{theorem}

We split the proof of Theorem \ref{thmdiscreteexistence} into several
parts consisting of Lemmas \ref{lemsemidiscreteAC} --
\ref{lemexistenceconcentration}, in which we prove, in turn, the
existence of unique solutions, $u^k$, $\phi^k$ and $c^k$ to
\eqref{discretisedorderparameter},
\eqref{discretisedelectricpotential} and
\eqref{discretisedconcentration} respectively.  In order to show the
existence of a unique solution, $u^k$ to
\eqref{discretisedorderparameter}, we first show that $u^k$ remains
valued in $\clinter01$ for all $k$.
\begin{lemma}[stability of the time-discrete order parameter]
  \label{lemmaxprin}
  Under assumptions \eqref{assump} and  $0 \leq u^{k-1} \leq 1$, 
  we have
  \begin{equation}\label{boundsorderparameter}
    0 \leq u^k \leq 1.
  \end{equation}
\end{lemma}
\begin{proof}
  Setting $\chi = (u^k)^-$, in \eqref{discretisedorderparameter} gives 
  \begin{multline}
    (u^k,(u^k)^-)
    +
    \tau(\mat A(\nabla u^k)\nabla u^k,\nabla (u^k)^-)
    -
    \tau (\mcm\hop,(u^k)^-)
    \\
    -
    \tau (\mcp\htp,(u^k)^-)
    +
    \tau  (g'(u^k),(u^k)^-)= (u^{k-1},(u^k)^-)
  \end{multline}
  then noting \eqref{anisotropicbounds} we have
  \begin{multline}
    \|(u^k)^-\|_{\leb 2(\Omega)}^2
    +
    \tau c_A |(u^k)^-|^2_{\sobh 1(\Omega)}
    \\
    \leq
    (u^{k-1},(u^k)^-)
    +
    \tau (\mcm\hop,(u^k)^-)
    \\
    +
    \tau (\mcp\htp,(u^k)^-)
    -
    \tau ( g'(u^k),(u^k)^-).
  \end{multline}
  By assumption $(u^{k-1})^-=0$ and noting from \eqref{primes} and \eqref{hhhh} that $(g'(u^k),(u^k)^-)\geq 0$, 
  $(\mcm\hop,(u^k)^-)\leq 0$ and %
  $(\mcp\htp,(u^k)^-)\leq 0$
  it follows that 
  \begin{equation}\label{lowerboundop}
    \|(u^k)^-\|^2_{\leb 2 (\Omega)} \leq 0~~\Rightarrow~~u^k \geq 0.
  \end{equation}
  Working similarly with $\eta = (u^k - 1)^+$ we obtain $u^k \leq 1$, which concludes the proof. 
\end{proof}
\begin{remark}[useful inequalities]
  Combining the result in Lemma \ref{lemmaxprin} with \eqref{Dsigma}, \eqref{diffusionoftransport} and \eqref{primes}, we have 
  \begin{equation}\label{dbounded}
    0<\min(D^e,D^s)=:D_{\min}\leq D(u^k) \leq D_{\max}:=\max(D^e,D^s),
  \end{equation}
  \begin{equation}\label{sigmabounded}
    0<\min(\sigma^e,\sigma^s)=:\sigma_{\min}\leq\sigma(u^k)\leq \sigma_{\max}:=\max(\sigma^e,\sigma^s),
  \end{equation}
  \begin{equation}\label{d1bounded}
    \abs{g'(u)}+\abs{h'(u)}+\abs{m(c)} + \abs{D_1(u,c)}\leq C,
  \end{equation}
  \begin{equation}
    \abs{ \left(g'(u^k), \chi\right)}
    \leq C \|u^k\|_{\leb 2(\Omega)}\|\chi\|_{\leb 2(\Omega)}, 
    \label{gg}
  \end{equation}
  \begin{equation}
    \abs{\left(\mcm\hop + \mcp\htp, \chi\right)}
    \leq C \|u^k\|_{\leb 2(\Omega)}\|\chi\|_{\leb 2(\Omega)}.
    \label{hh}
  \end{equation}
\end{remark}

\begin{lemma}[existence and uniqueness the time-discrete order parameter]
  \label{lemsemidiscreteAC}
  Suppose assumptions \eqref{assump} hold and $u^{k-1}\in\leb
  2(\Omega)$. Then there exists a unique solution $u^k\in\sobh
  1(\Omega)$ that satisfies the $k$-th step of
  \eqref{discretisedorderparameter}.
\end{lemma}
\begin{proof}
  We rewrite \eqref{discretisedorderparameter} as follows
  \begin{equation}\label{gradfloworderparameter}
    (d_{\tau} u^k,\chi) + I'(u^k)\chi = 0 ,~~\forall \chi \in \sobh 1(\Omega), 
  \end{equation}
  where $I': \sobh 1(\Omega)\to (\sobh 1(\Omega))'$ is given by
  \begin{equation}\label{frechetderivative}
    \begin{split}
      I'(w)\chi
      &
      =
      \left(\mat A(\nabla w)\nabla w,\nabla \chi\right) + \left(g'(w) - \mcm\hopw, \chi\right)
      \\
      &\phantom=
      -\left(\mcp\htpw,\chi\right)
    \end{split}
  \end{equation} 
  is the (Fréchet) derivative of the energy functional $I: \sobh 1(\Omega)\to\mathbb{R}$ with, \changes{recalling \eqref{eqdfg},}
  \begin{equation}\label{energyfunctionalorderparameter}
    \begin{split}
      I(w)
      &
      =
      \int_{\Omega}
      \Big(\frac{a(\nabla w)^2}{2}|\nabla w|^2 + g(w) - \changes{\mcm\how}
      \\&
      \phantom=
      -
      \changes{ \mcp\htw}\Big). 
    \end{split}
  \end{equation}
  We now seek a minimiser to the functional $I_k : \sobh 1(\Omega)\to\mathbb{R}$
  \begin{equation}\label{discretefunctional}
    I_k(w) = \frac{1}{2\tau} \|w-u^{k-1}\|^2_{\leb 2(\Omega)} + I(w)
    \changes{\ = \int_{\Omega}F(x,w,\nabla w)},
  \end{equation}
  \changes{%
    where 
    \begin{multline}
      \label{integranddiscrete}
      F(x,\xi,\vec p) := \frac{a(\vec{p})^2}{2}|\vec p|^2
      + \frac{1}{2\tau}(\xi-u^{k-1})^2
      \\
      + \changes{g(\xi) - \mcm\hox - \mcp\htx}.%
    \end{multline}
  Note that $I_k$ is the functional of which 
  \eqref{gradfloworderparameter} is the Euler--Lagrange equation.
  }%
  From Theorem 3.30 in \citet{dacorognaDirectMethodsCalculus2008}, in
  order to show the existence of a minimiser of
  \eqref{discretefunctional} it is sufficient to exhibit that the
  function $F$ is a Carathéodory function, coercive and satisfies a
  suitable growth condition. From
  \citep[Remark~2.4(i)]{bartelsNumericalMethodsNonlinear2015} it follows that $F$
  is a Carathédory function.  From \eqref{d1bounded} we deduce that
  there are constants $\gamma_1,\gamma_2 >0$ such that
  \begin{equation}\label{polynomialbounds}
    -\gamma_1 \leq \changes{g(\xi) - \mcm\hox - \mcp\htx} \leq \gamma_2.
  \end{equation} 
  Using \eqref{abounds} and 
  \eqref{polynomialbounds} 
  we establish the following bounds on $F$:
  \begin{align}
    F(x,\xi,\vec p) &
    =
    \frac{a(\vec{p})^2}{2}|\vec p|^2
    +
    \frac{1}{2\tau}(\xi-u^{k-1})^2
    +
    \changes{g(\xi) - \mcm\hox} \nonumber \\
    & \qquad \changes{- \mcp\htx}
    \nonumber  \\
    &
    \geq \frac{c_A}{2} \abs{\vec p}^2
    +
    \frac{1}{4\tau}\xi^2
    -
    \frac{1}{2\tau}(u^{k-1})^2
    -
    \gamma_1
  \end{align}
  and
  \begin{align}
    F(x,\xi,\vec p)
    &
    =
    \frac{a(\vec{p})^2}{2}|\vec p|^2
    +
    \frac{1}{2\tau}(\xi-u^{k-1})^2
    +
    \changes{g(\xi) - \mcm\hox \nonumber} \\
    &\qquad \changes{- \mcp\htx \nonumber}\\
    &
    \leq
    \frac{C_A}{2} \abs{\vec p}^2 + \frac{1}{\tau} \xi^2
    +
    \frac{1}{\tau} (u^{k-1})^2 + \gamma_2.
  \end{align}
  It only remains to show the uniqueness of the minimizer. Assume
  $w_1$ and $w_2$ are minimizers. From \eqref{gradfloworderparameter}
  it follows that
  \begin{multline}
    \label{inteq1}
    \int_{\Omega} \frac{1}{\tau} (w_1 - w_2)v
    +\int_{\Omega} (\mat A(\nabla w_1)\nabla w_1 - \mat A(\nabla w_2)\nabla w_2)\cdot \nabla v
    \\
    +\,
    \int_{\Omega} \Big(g'(w_1) - g'(w_2)\Big)v  
    -
    \int_{\Omega} \mcp\Big(\hopwo-\hopwt\Big)v\\
    -
    \int_{\Omega} \mcm\Big(\htpwo-\htpwt \Big)v = 0.
  \end{multline}

  Choosing $v = w_1 - w_2$ in \eqref{inteq1}, using the
  Lipschitz continuity of $g'$, $\hop$ and $\htp$,
  \eqref{d1bounded}, and noting from Remark \ref{monotone} the
  monotonicity of the mapping $w\mapsto J_a'(w)$, we obtain
  \changes{%
    \begin{multline}
      \label{cstartwo}
      \frac1{\tau} \|w_1 - w_2\|^2_{\leb 2(\Omega)}
      \leq
      \int_{\Omega} |(g'(w_1) - g'(w_2))(w_1-w_2)|
      \\
      + C
      \int_{\Omega}
      |\hopwo - \hopwt||w_1-w_2|
      \\
      \leq
      \cstartwo
      \|w_1 - w_2\|^2_{\leb 2(\Omega)}
    \end{multline}
  }%
  and hence 
  \begin{equation}
    (\frac{1}{\tau}-\cstartwo) \|w_1 - w_2\|^2_{\leb 2(\Omega)} \leq 0,
  \end{equation}
  where $\cstartwo$ is independent of $\tau$. Thanks to
  assumption \eqref{assump} we have $\tau<1/\cstar$ and hence
  the uniqueness of the minimizer.
\end{proof}

\begin{lemma}[energy estimates for the order parameter]
  \label{lemenergyestimatesorder}
  Under assumptions \eqref{assump} we have 
  \begin{equation}
    \label{energyestimatesorder1}
    \max_{l=0,\dotsc,K}
    \|u^l\|_{\leb 2(\Omega)}^2
    +
    \tau \sum_{k=1}^K \|u^k\|^2_{\sobh 1(\Omega)}
    +
    \tau \sum_{k=1}^K \|d_{\tau} u^k\|^2_{\leb 2(\Omega)}
    \leq C.
  \end{equation}
\end{lemma}

\begin{proof}
  Setting $\chi=u^k$ in \eqref{gradfloworderparameter} and noting \eqref{anisotropicbounds}, \eqref{frechetderivative}, \eqref{hh}, \eqref{gg} and 
  \begin{equation}\label{discretetimederformula}
    ( d_{\tau} u^k,u^k) = \frac{\tau}{2} \|d_{\tau} u^k\|^2_{\leb 2(\Omega)} + \frac{1}{2}d_{\tau}\|u^k\|^2_{\leb 2(\Omega)} 
  \end{equation}
  we have
  \begin{multline}\label{cstar}
    \frac{\tau}{2}
    \|d_{\tau} u^k\|^2_{\leb 2(\Omega)}
    +
    \frac{1}{2}d_{\tau}\|u^k\|^2_{\leb 2(\Omega)}
    + c_A |u^k|^2_{\sobh 1(\Omega)}
    \\
    \leq 
    \abs{\left(g'(u^k), u^k\right)}%
    \\
    +\abs{ \left(\mcm\hop, u^k\right)} + \abs{\left(\mcp\htp, u^k\right)}\\%\nonumber\\ 
    \qquad \qquad\qquad \qquad \qquad\qquad \leq \cstar \|u^k\|^2_{\leb 2(\Omega)},
  \end{multline}
  where $\cstar$ is independent of $\tau$. 
  Multiplying by $\tau$ and summing from $k=1,2,...,l$, yields for any $l\leq K$ that 
  \begin{equation}
    \left(\frac{1}{2}-\tau \cstar\right) \|u^l\|^2_{\leb 2(\Omega)} + \tau c_A \sum_{k=1}^l |u^k|^2_{\sobh 1(\Omega)} \leq \frac{1}{2} \|u^0\|^2_{\leb 2(\Omega)} + \tau C \sum_{k=1}^{l-1} \|u^k\|^2_{\leb 2(\Omega)}.
    \label{ppp}
  \end{equation}
  Owing to \eqref{assump} we have $\tau<1/(4\cstartwo)$. Hence using 
  a generalized discretized Gronwall lemma
  \citep[Lem 2.2]{bartelsNumericalMethodsNonlinear2015} we obtain
  \begin{equation}
    \label{1stenergyestimates}
    \frac{1}{4} \max_{l\integerbetween1K}
    \|u^l\|^2_{\leb 2(\Omega)}
    +
    \tau c_A
    \sum_{k=1}^K
    |u^k|_{\sobh 1(\Omega)}^2
    \leq
    \frac{1}{2}
    \|u^0\|^2_{\leb 2(\Omega)}
    \expp {C T}
    \leq
    C
    ,
  \end{equation}
  which yields the first two bounds in \eqref{energyestimatesorder1}. 
  To prove the third bound we set $\chi=d_{\tau}u^k$ in  \eqref{gradfloworderparameter}, multiply the resulting equation by $\tau$ and sum over  $k=1,2,...,l$, to obtain for any $l\leq K$ that
  \begin{multline}
    \tau  \sum_{k=1}^l \|d_{\tau}u^k\|^2_{\leb 2(\Omega)} + \sum_{k=1}^l (\mat A(\nabla u^k)\nabla u^k,\nabla[u^k - u^{k-1}]) + \tau \sum_{k=1}^l (g'(u^k),d_{\tau}u^k) \\
    \qquad = \tau\sum_{k=1}^l (\mcm\hop,d_{\tau}u^k) + \tau\sum_{k=1}^l (\mcp\htp,d_{\tau}u^k).
  \end{multline} 
  Using 
  \eqref{convexineq}, \eqref{hh} and \eqref{gg} we have
  \begin{equation}
    \tau\sum_{k=1}^l\|d_{\tau}u^k\|^2_{\leb 2(\Omega)} + \sum_{k=1}^l(J_a(u^k) - J_a(u^{k-1})) \leq C\tau\sum_{k=1}^l  \|u^k\|^2_{\leb 2(\Omega)} 
    + \frac{\tau}{2}\sum_{k=1}^l \|d_{\tau}u^k\|^2_{\leb 2(\Omega)}.
  \end{equation}
  Hence, noting \eqref{1stenergyestimates}, \eqref{assump} and
  $u_0\in \sobh 1(\Omega)$, we have
  \begin{equation}\label{3stenergyestimates}
    \frac{\tau}{2}\sum_{k=1}^l \|d_{\tau}u^k\|^2_{\leb 2(\Omega)} + J_a(u^l) \leq J_a(u^0) +  \frac{\tau C}{4}\sum_{k=1}^l \|u^k\|^2_{\leb 2(\Omega)} \leq C,
  \end{equation}
  which completes the proof. 
\end{proof}
\begin{lemma}[existence and uniqueness of the time-discrete potential]
  \label{lemexistencepoisson}
  Let assumptions \eqref{assump} hold. Then there exists
  a unique solution $\phi^k\in \sobhz[\Gamma_1\cup\Gamma_2] 1(\Omega)$, 
  that satisfies the $k$-th step of \eqref{discretisedelectricpotential}.
\end{lemma}
\begin{proof}
  We introduce the form $b_1:(\sobh 1(\Omega))^2\to\mathbb{R}$ 
  \begin{equation}\label{bilinearformelectric}
    b_1(u^k;v,w) = \int_{\Omega}\sigma(u^{k})\nabla v\cdot \nabla w,
  \end{equation}
  which is bilinear in respect to $v$ and $w$.
  Using \eqref{bilinearformelectric} we can rewrite \eqref{discretisedelectricpotential} as
  \begin{equation}\label{hjk}
    b_1(u^k;\phi^k,\eta) = -( d_{\tau} u^{k}+\sigma'(u^k)\changes{\partial_{x_1} u}^k,\eta).
  \end{equation}
  We now prove existence of a weak solution using the Lax--Milgram Theorem. 
  The coercivity and boundedness of $b_1$ \nocite{evansPartialDifferentialEquations1998} follow from \eqref{sigmabounded} and the equivalence of the $\sobh 1$ semi-norm and the $\sobh 1$ norm under a homogeneous Dirichlet boundary condition:
  \begin{equation}
    b_1(u^k;v,v) = \int_{\Omega} \sigma(u^{k})\abs{\nabla v}^2 \geq \sigma_{\min}\abs{v}^2_{\sobh 1(\Omega)},
  \end{equation}
  \begin{equation}
    \abs{b_1(u^k;v,w)} \leq \sigma_{\max}\abs{v}_{\sobh 1(\Omega)}\abs{w}_{\sobh 1(\Omega)}.
  \end{equation} 
  From Lemma \ref{lemenergyestimatesorder}  we have that 
  \begin{equation}
    \|u^k\|^2_{\sobh 1(\Omega)} +  \|d_{\tau} u^k\|^2_{\leb 2(\Omega)} \leq \frac{C}{\tau}
  \end{equation} 
  which together with \eqref{sigmaprime}, imply 
  \begin{equation}
    \|d_{\tau} u^{k}+\sigma'(u^k)\changes{\partial_{x_1} u}^k\|^2_{\leb 2(\Omega)}\leq \frac{C}{\tau}.
  \end{equation} 
  Thus, by 
  the Lax--Milgram Theorem, \eqref{discretisedelectricpotential} has a unique solution at the $k$-th step $\phi^k \in \sobhz[\Gamma_1\cup\Gamma_2]1(\Omega)$, with 
  \begin{equation}\label{ccck}
    \|\phi^k\|_{\sobh 1(\Omega)}  \leq \frac{C}{\tau}.
  \end{equation} 
\end{proof}

\begin{lemma}[existence and uniqueness of the time-discrete concentration]
  \label{lemexistenceconcentration}
  Under assumptions \eqref{assump}, if $ \|c^{k-1}\|_{\leb
    2(\Omega)}\leq C$, then there exists a unique solution
  $c^k\in\sobh 1(\Omega)$ that satisfies the $k$-th step of
  \eqref{discretisedconcentration}.
\end{lemma}

\begin{proof}
  We introduce the form $b_2: (\sobh 1(\Omega))^2\to\mathbb{R}$
  \begin{equation}\label{linearizedbilinearform}
    b_2(u^k;w,v):= \int_{\Omega} D(u^{k})\nabla w\cdot\nabla v + \frac{1}{\tau}wv,
  \end{equation}
  which is bilinear for $w$ and $v$. Using \eqref{linearizedbilinearform} we can rewrite \eqref{discretisedconcentration} as
  \begin{equation}\label{pppp}
    b_2(u^k;c^k,\chi) = -( d_{\tau} u^{k},\chi) - (D_1(u^{k},c^{k-1})\nabla\phi^{k},\nabla \chi) + \frac{1}{\tau}(c^{k-1},\chi).
  \end{equation}
  Again we prove existence of a weak solution using the Lax--Milgram Theorem. Coercivity and boundedness of $b_2$ follow from \eqref{dbounded}: 	\begin{multline}
    b_2(u^k;w,w) = \int_{\Omega}D(u^{k})\abs{\nabla w}^2 + \frac{1}{\tau}\abs{w}^2 \geq D_{\min}|w|^2_{\sobh 1(\Omega)} + \frac{1}{\tau}\|w\|^2_{\leb 2(\Omega)}\\
    \geq D_{\min}\|w\|^2_{\sobh 1(\Omega)},
  \end{multline}
  \begin{multline}
    \abs{b_2(u^k;w,v)} = \abs{\int_{\Omega} D(u^{k})\nabla w\cdot\nabla v + \frac{1}{\tau}wv} \leq \int_{\Omega} |D(u^{k})||\nabla w||\nabla v| + \frac{1}{\tau}|w||v|\\
    \leq D_{\max} |w|_{\sobh 1(\Omega)}|v|_{\sobh 1(\Omega)} + \frac{1}{\tau}\|w\|_{\leb 2(\Omega)}\|v\|_{\leb 2(\Omega)} \leq \frac{1}{\tau} \|w\|_{\sobh 1(\Omega)}\|v\|_{\sobh 1(\Omega)}.
  \end{multline}
  From Lemma \ref{lemenergyestimatesorder}, the assumption that $ \|c^{k-1}\|_{\leb 2(\Omega)}\leq C $, \eqref{d1bounded} and \eqref{ccck} we have 
  \begin{equation}
    \|d_{\tau} u^k\|_{\leb 2(\Omega)} + \frac{1}{\tau}\|c^{k-1}\|_{\leb 2(\Omega)} +
    \|D_1(u^{k},c^{k-1})\nabla\phi^{k}\|^2_{\leb 2(\Omega)}\leq \frac{C}{\tau}.
  \end{equation} 
  Thus the right hand side of \eqref{pppp} is bounded, dependent on $\tau$,  in $(\sobh 1(\Omega))'$ and by the Lax--Milgram Theorem, there exists a unique solution $c^k \in \sobh 1(\Omega)$ that satisfies the $k$-th step of \eqref{discretisedconcentration}. 
\end{proof}

\begin{lemma}[energy stability of the potential]
  \label{lemenergyestimatespoisson}
  Let assumptions \eqref{assump} hold and let $\phi^k$, be the
  solution to the $k$-th step of
  \eqref{discretisedelectricpotential}. Then we have
  \begin{equation}\label{energyestimateselectric}
    \tau \sum_{k=1}^K \|\phi^k\|^2_{\sobh 1(\Omega)} \leq C.
  \end{equation}
\end{lemma}

\begin{proof}
  We set $\eta = \phi^k$ in  \eqref{discretisedelectricpotential} to give
  \begin{equation*}
    \int_{\Omega}\sigma(u^{k})|\nabla\phi^k|^2 = -\int_{\Omega}d_{\tau} u^{k}\phi^k + \int_{\Omega} \sigma'(u^{k})\changes{\partial_{x_1} u^{k}}\phi^k.
  \end{equation*}
  Noting \eqref{sigmabounded} we have
  \begin{equation}\label{hgf}
    ~\sigma_{\min}|\phi^k|^2_{\sobh 1(\Omega)} \leq \|d_{\tau} u^{k}\|_{\leb 2(\Omega)}\|\phi^k\|_{\leb 2 (\Omega)} + \sigma'_{\max} \|\changes{\partial_{x_1} u^{k}}\|_{\leb 2(\Omega)}\|\phi^k\|_{\leb 2(\Omega)},
  \end{equation} 
  where $\sigma_{\max}':=\max_{s\in\mathbb{R}}\sigma'(s)$. Hence from the Friedrichs inequality we have 
  \begin{equation}\label{nnb}\begin{split}
      \frac{\sigma_{\min}}{2C_f}\|\phi^k\|_{\leb 2(\Omega)}^2 + \frac{\sigma_{\min}}{2}|\phi^k|^2_{\sobh 1(\Omega)} \leq C\|d_{\tau} u^{k}\|_{\leb 2(\Omega)}^2 + \frac{\sigma_{\min}}{8C_f}\|\phi^k\|^2_{\leb 2(\Omega)} \\
      + C \|\changes{\partial_{x_1} u^{k}}\|_{\leb 2(\Omega)}^2 + \frac{\sigma_{\min}}{8C_f}\|\phi^k\|^2_{\leb 2(\Omega)}
    \end{split}
  \end{equation} 
  where $C_f$ is the constant in the Friedrichs inequality, which is associated with the diameter of $\Omega$. 
  After rearranging \eqref{nnb}, multiplying the resulting inequality by $\tau$ and summing over $k=1,2,...,l$, for any $l\leq K$ we have
  \begin{equation}
    \tau\sum_{k=1}^l\|\phi^k\|^2_{\sobh 1(\Omega)} \leq C\Big (\tau\sum_{k=1}^l\|d_{\tau}u^k\|^2_{\leb 2(\Omega)} + \tau\sum_{k=1}^l\|\changes{\partial_{x_1} u^k}\|^2_{\leb 2(\Omega)} \Big)
  \end{equation}
  and the result follows from Lemma \ref{lemenergyestimatesorder}.
\end{proof}
\begin{lemma}[energy estimates for the concentration]
  \label{lemenergyestimatesconcentration}
  Let assumptions \eqref{assump} hold. Then 
  \begin{equation}\label{energyestimatesconc1}
    \max_{l=0,...,K}\|c^l\|^2_{\leb 2(\Omega)} + \tau \sum_{k=1}^K \|c^k\|^2_{\sobh 1(\Omega)} + \tau \sum_{k=1}^K \|d_{\tau} c^k\|^2_{(\sobh 1(\Omega))'} \leq C.
  \end{equation}
\end{lemma}
\begin{proof}
  Setting $\chi = c^k$ in \eqref{discretisedconcentration} 
  gives
  \begin{equation}
    ( d_{\tau} c^k,c^k) + (D(u^k)\nabla c^k,\nabla c^k)
    =
    -
    (d_{\tau}u^k,c^k)
    -
    (D_1(u^k,c^{k-1})\nabla \phi^k,\nabla c^k)
  \end{equation}
  and noting \eqref{discretetimederformula}, \eqref{dbounded} and \eqref{d1bounded}, we obtain
  \begin{multline}
    \frac{\tau}{2} \|d_{\tau} c^k\|^2_{\leb 2 (\Omega)}
    +
    \frac{1}{2}d_{\tau}\|c^k\|^2_{\leb 2(\Omega)}
    +
    D_{\min} |c^k|^2_{\sobh 1(\Omega)}
    \\
    \leq
    \frac{1}{2} \|d_{\tau}u^k\|^2_{\leb 2(\Omega)}
    + \frac{1}{2}\|c^k\|^2_{\leb 2(\Omega)}
    + C|\phi^k|^2_{\sobh 1(\Omega)}
    +
    \frac{D_{\min}}{2}|c^k|^2_{\sobh 1(\Omega)}.\label{dbc}
  \end{multline}
  After rearranging \eqref{dbc}, multiplying the resulting inequality by $\tau$ and summing over $k=1,2,...,l$, 
  for any $l\leq K$, we have
  \begin{multline*}
    \frac{\tau^2}{2} \sum_{k=1}^l \|d_{\tau}c^k\|^2_{\leb 2(\Omega)} + \frac{1}{2}\|c^l\|^2_{\leb 2 (\Omega)} + \tau\frac{D_{\min}}{2}\sum_{k=1}^l |c^k|^2_{\sobh 1(\Omega)}\\
    \leq \frac{1}{2}\|c^0\|^2_{\leb 2(\Omega)} + \frac{\tau}{2} \sum_{k=1}^l \|d_{\tau}u^k\|^2_{\leb 2(\Omega)} +  \frac{\tau}{2} \sum_{k=1}^l \|c^k\|^2_{\leb 2(\Omega)}+ \tau C\sum_{k=1}^l |\phi^k|^2_{\sobh 1(\Omega)}.
  \end{multline*}
  Thus, noting from assumption \eqref{assump} that $c_0\in \leb
  2(\Omega)$, we have
  \begin{multline}
    \label{mmn}
    \frac{1}{2}\left( 1 - \tau\right)\|c^l\|^2_{\leb 2(\Omega)}
    +
    \tau\frac{D_{\min}}{2}
    \sum_{k=1}^l |c^k|^2_{\sobh 1(\Omega)}
    \leq
    C\Big(1+ \tau\sum_{k=1}^l \|d_{\tau}u^k\|^2_{\leb 2(\Omega)}
    \\
    +
    \tau\sum_{k=1}^l |\phi^k|^2_{\sobh 1(\Omega)}
    +
    \tau \sum_{k=1}^{l-1} \|c^k\|^2_{\leb 2(\Omega)}\Big).
  \end{multline}
  Since \eqref{assump} holds we have $\tau<1/2$ and hence using the discrete Gronwall Lemma, we obtain
  \begin{equation}\begin{split}
      \frac{1}{4}\max_{l=1,...,K}\|c^l\|^2_{\leb 2(\Omega)} + \tau \frac{D_{\min}}{2}\ \sum_{k=1}^K |c^k|^2_{\sobh 1(\Omega)}~~~~~~~~~~~~~~~~~~~~~\\
      \leq C\Big(1+ \tau\sum_{k=1}^K \|d_{\tau}u^k\|^2_{\leb 2(\Omega)} + \tau \sum_{k=1}^K |\phi^k|^2_{\sobh 1(\Omega)} \Big)\expp {\frac{T}{2}}
    \end{split}
  \end{equation}
  and from Lemmas \ref{lemenergyestimatesorder} and \ref{lemenergyestimatespoisson} we conclude
  \begin{equation}
    \frac{1}{4}\max_{l=0,...,K}\|c^l\|^2_{\leb 2(\Omega)} + \tau \sum_{k=1}^K |c^k|^2_{\sobh 1(\Omega)}\leq C.
    \label{jkl}
  \end{equation}
  To prove the last estimate in \eqref{energyestimatesconc1}, we note
  from \eqref{discretisedconcentration}, \eqref{dbounded} and
  \eqref{d1bounded} that
  \begin{multline}
    (d_{\tau}c^k,\chi)_{((\sobh 1(\Omega))',\sobh 1(\Omega))}
    \leq
    \|D(u^k)\nabla c^k \|_{\leb 2(\Omega)}\|\nabla \chi\|_{\leb 2(\Omega)}
    \\
    +\| d_{\tau}u^k\|_{\leb 2(\Omega)}\| \chi\|_{\leb 2(\Omega)}
    + \|D_1(u^k,c^{k-1})\nabla\phi^k\|_{\leb 2(\Omega)}\|\nabla \chi\|_{\leb 2(\Omega)}\\
    \leq C (|c^k|_{\sobh 1(\Omega)} + |\phi^k|_{\sobh 1(\Omega)} + \|d_{\tau}u^k\|_{\leb 2(\Omega)})\|\chi\|_{\sobh 1(\Omega)} 
  \end{multline}
  and so we conclude 
  \begin{equation}
    \|d_{\tau}c^k\|_{(\sobh 1(\Omega))'}^2 
    \leq C(|c^k|^2_{\sobh 1(\Omega)} + |\phi^k|^2_{\sobh 1(\Omega)} + \|d_{\tau}u^k\|^2_{\leb 2(\Omega)}).
  \end{equation}
  The required result follows after multiplying by $\tau$, summing over $k=1,2,...,K$ and noting \eqref{jkl} and Lemmas \ref{lemenergyestimatesorder} and \ref{lemenergyestimatespoisson}. 
\end{proof}

\begin{Proof}[ of Theorem \ref{thmdiscreteexistence}]
  Theorem \ref{thmdiscreteexistence} is a direct consequence of Lemmas \ref{lemsemidiscreteAC}, \ref{lemexistencepoisson} and \ref{lemexistenceconcentration}.
\end{Proof}
\section{Weak convergence of the limits}\label{sec4}
In this section, we prove that as $\tau\to 0$ subsequences of the
solution to
\eqref{discretisedorderparameter}--\eqref{discretisedconcentration}
converge to a solution of
\eqref{orderparameterfinalform}--\eqref{electricpotentialfinalform}.

Introduce the notation
\begin{equation}
  u^-_{\tau}(t) = u^{k-1},
  \quad
  u^+_{\tau}(t)
  =
  u^k\text{ for }t_{k-1}<t\leq t_k.
\end{equation}
\changes{We introduce the sequence $u^k$'s piecewise linear interpolant in time}
\begin{equation}\label{interpolantsintime}
  \hat{u}_{\tau}(t)
  =
  \frac{t-t_{k-1}}{\tau}u^k
  +
  \frac{t_k-t}{\tau}u^{k-1},
  \text{ for }t_{k-1}\leq t\leq t_k,
  k\integerbetween0K.
\end{equation}
\changes{We similarly define
$c^\pm_{\tau}(t),\hat{c}_{\tau}(t),\phi^\pm_{\tau}(t)$, and $\hat{\phi}_{\tau}(t)$.}
Next recalling \eqref{hhhh}, the terms on the left hand side of
\eqref{discretisedorderparameter} can be reformulated as
\begin{equation}
  \begin{split}
    (\mcm\hop,\chi)&+(\mcp\htp,\chi) 
    \\
    &=
    30\left(m(c^{k-1})u^{k-1}u^k(u^k-1)^2,\chi\right)
    \\
    &\phantom=
    +30\left((m(c^{k-1}))^+(u^k-1)u^k(u^{k-1}-u^{k}),\chi\right).
  \end{split}
\end{equation}
Thus rewriting
\eqref{discretisedorderparameter}--\eqref{discretisedelectricpotential}
and using the above notation, gives, for $t^{k-1}<t\leq t^k$,
$k=1,\ldots, K$, and for all
$(\chi,\eta)\in \sobh 1(\Omega)\times\sobhz[\Gamma_1\cup\Gamma_2]1(\Omega)$
\begin{subequations}
  \begin{multline}
    \label{approxorder}
    (\partial_t \hat{u}_{\tau},\chi)
    +
    (\mat A(\nabla u^+_{\tau}(t))\nabla u^+_{\tau}(t),\nabla \chi)
    +
    ( g'(u^+_{\tau}(t)),\chi) 
    \\
    = (\mcf,\chi)
  \end{multline}
  \begin{multline}
    \label{approxconcentration}
    (\partial_t \hat{c}_{\tau},\chi) + (D(u^+_{\tau}(t))\nabla c^+_{\tau}(t),\nabla \chi)
    \\
    = -(\hat{u}_{\tau}'(t),\chi) - (D_1(u^+_{\tau}(t),c^-_{\tau}(t))\nabla \phi^+_{\tau}(t), \nabla \chi)
  \end{multline}
  and
  \begin{equation}
    \label{approxelectricpotential}
    (\sigma(u^+_{\tau}(t))\nabla\phi^+_{\tau}(t),\nabla \eta) = -(\hat{u}_{\tau}'(t) + \sigma'(u_{\tau}^+(t))\changes{\partial_{x_1} u_{\tau}^+(t)},\eta), 
  \end{equation}
\end{subequations}
where
\begin{equation}
  \label{mcf}
  \begin{split}
    (\mcf,\chi):=30\left(\mc \umwt \upwt (\upwt-1)^2,\chi\right)
    \\
    +30\left(\mcppwt(\upwt-1)\upwt(\umwt-\upwt),\chi\right).
  \end{split}
\end{equation}
\begin{lemma}[stability bounds]
  \label{lemapproxsystem}
  Assuming \eqref{assump}, we have 
  \begin{equation}
    \label{approxestimatesorder}
    \|u_{\tau}^+\|_{\leb {\infty}([0,T];\leb \infty(\Omega))}
    +
    \|u_{\tau}^+\|_{\leb 2([0,T];\sobh 1(\Omega))}
    +
    \|\partial_t \hat{u}_{\tau}\|_{\leb 2([0,T]; \leb 2(\Omega))}
    \leq
    C
    ,
  \end{equation}
  \begin{equation}
    \label{approxestimatesconcentration}
    \|c_{\tau}^+\|_{\leb {\infty}([0,T];\leb 2(\Omega))}
    +
    \|c_{\tau}^+\|_{\leb 2([0,T];\sobh 1(\Omega))}
    +
    \|
    \partial_t
    \hat{c}_{\tau}
    \|_{\leb 2([0,T]; (\sobh 1(\Omega))')} \leq C,
  \end{equation}
  \begin{equation}
    \label{approxestimateselectric}
    \|\phi_{\tau}^+\|_{\leb 2([0,T];\sobh 1(\Omega))} \leq C. 
  \end{equation}
\end{lemma}

\begin{proof}
  The first and second bounds in \eqref{approxestimatesorder}
  follow from Lemmas \ref{lemmaxprin} and
  \ref{lemenergyestimatesorder}.  For the last bound in
  \eqref{approxestimatesorder}, we note from
  \eqref{interpolantsintime} that $\partial_t \hat{u}_{\tau}(t) =
  d_{\tau}u^k$ on $(t_{k-1},t_k)$, $k=1,2,...,K$ and thus the
  result follows from Lemma \ref{lemenergyestimatesorder}.
  
  The bounds in \eqref{approxestimatesconcentration} and
  \eqref{approxestimateselectric} follow using similar arguments
  and the results of Lemmas \ref{lemenergyestimatesconcentration}
  and \ref{lemenergyestimatespoisson} respectively.
\end{proof}
\begin{lemma}[weak compactness]
  \label{lemselectionofthelimits}
  There exist $(u,c,\phi)$ and $(\partial_t u,\partial_t c)$ 
  such that for a sequence $(\tau_n)_{n\in\mathbb{N}}$ of positive numbers with $\tau_n\to 0$ as $n\to\infty$, we have the following for $1\leq p < \infty$ if $\dim\Omega = 2$ and $1\leq p < 6$ if $\dim\Omega = 3$, up to a subsequence, which for simplicity of presentation we again denote by $\tau_n$,
  \begin{subequations}
    \begin{align}
      \label{LinftyLinftyconvorder}
      \hat{u}_{\tau_n},u_{\tau_n}^{\pm} \overset{\ast}{\rightharpoonup} u \text{ in }  \leb {\infty}([0,T];\leb \infty(\Omega)),\\
      \label{L2H1convorder}
      \hat{u}_{\tau_n},u_{\tau_n}^{\pm} \rightharpoonup u \text{ in } \leb 2([0,T];\sobh 1(\Omega)),\\
      \label{L2L2convtimeorder}
      \partial_t \hat{u}_{\tau_n}\rightharpoonup \partial_tu  \text{ in }  \leb 2([0,T]; \leb 2(\Omega)),\\
      \label{LinftyL2convconce}
      \hat{c}_{\tau_n},c_{\tau_n}^{\pm} \overset{\ast}{\rightharpoonup} c  \text{ in }  \leb {\infty}([0,T];\leb 2(\Omega)),\\
      \label{L2H1convconce}
      \hat{c}_{\tau_n},c_{\tau_n}^{\pm} \rightharpoonup c  \text{ in } \leb 2([0,T];\sobh 1(\Omega)), \\
      \label{L2H1'convconce}
      \partial_t \hat{c}_{\tau_n} \rightharpoonup \partial_tc  \text{ in }  \leb 2([0,T]; (\sobh 1(\Omega))'),\\
      \label{L2H2convelectric}
      \phi_{\tau_n}^{\pm} \rightharpoonup \phi  \text{ in }  \leb 2([0,T];\sobh 1(\Omega)),\\
      \label{strongu}
      u_{\tau_n}^{\pm} \rightarrow u \text{ in } \leb 2([0,T];\leb p(\Omega)),\\
      \label{strongc}
      c_{\tau_n}^{\pm} \rightarrow c \text{ in } \leb 2([0,T];\leb p(\Omega)).
    \end{align}
  \end{subequations}
\end{lemma}

\begin{proof}
  From Lemma \ref{lemapproxsystem}
  it follows that there are weakly convergent subsequences that converge to the appropriate limits in  \eqref{LinftyLinftyconvorder}-\eqref{L2H2convelectric}. 
  To establish \eqref{strongu} and \eqref{strongc} we use \eqref{L2H1convorder}, \eqref{L2L2convtimeorder}, \eqref{L2H1convconce}, \eqref{L2H1'convconce} and the Aubin-Lions Lemma \citep[e.g.][]{roubicekNonlinearPartialDifferential2005}. 
\end{proof}

\begin{lemma}[strong convergence]
  \label{lemstrongconvergencediffcond}
  For $u\in \leb \infty([0,T];\leb \infty(\Omega))$ and 
  \changes{$c\in \leb 2([0,T];\leb p(\Omega))$, with $1\leq p < \infty$ if $\dim\Omega = 2$ and $1\leq p < 6$ if $\dim\Omega = 3$, we have the following as $\tau_n\to 0$,}
  \begin{subequations}
    \begin{align}
      \label{strongconvdoublewellprime}
      g'(u_{\tau_n}^+)\to g'(u) \text{ in } \leb 2([0,T];\leb p(\Omega)),\\
      \label{sabc}
      \umt \upt (\upt-1)^2 \to  u^2(u-1)^2 \text{ in } \leb 2([0,T];\leb p(\Omega)),\\
      \label{strongconvergencediffusion}
      D(u_{\tau_n}^+) \rightarrow D(u) \text{ in } \leb 2([0,T];\leb p(\Omega)),\\
      \label{strongconvergenceconductivity}
      \sigma(u_{\tau_n}^+) \rightarrow \sigma(u)  \text{ in }  \leb 2([0,T];\leb p(\Omega)),\\
      \label{strongconvergenceforcing}
      m(c^{-}_{\tau_n}) \rightarrow m(c) \text{ in } \leb 2([0,T];\leb p(\Omega)),\\
      \label{strongconvergenceconvection}
      D_1(u^+_{\tau_n},c^-_{\tau_n}) \to D_1(u,c) \text{ in }
      \changes{\leb 2([0,T];\leb p(\Omega))},
      \\
      \label{strongconvergencehprime}
      30\,m(\cmt)\umt \upt (\upt-1)^2 \to  m(c)h'(u) \text{ in }
      \changes{\leb 2([0,T];\leb p(\Omega))},
      \\
      \label{convtozero}
      \mcpp(\upt-1)\upt(\umt-\upt)\to 0 \text{ in } \leb 2([0,T];\leb 2(\Omega)).
    \end{align}
  \end{subequations}
\end{lemma}
\begin{proof}
  Noting that $g'(u),D(u),\sigma(u)$ are polynomials in $u$ and $m(c)$
  is linear in $c$, the proofs of
  \eqref{strongconvdoublewellprime}--\eqref{strongconvergenceforcing} follow from Lemma
  \ref{lemapproxsystem}, \eqref{LinftyLinftyconvorder},
  \eqref{LinftyL2convconce}, \eqref{strongu} and \changes{\eqref{strongc}}.  
  \changes{Noting, from definition \eqref{diffusionoftransport},
    that $D_1$ is a bounded Lipschitz function}
  we obtain
  \changes{
  \begin{multline}
  \|D_1(\upt,\cmt) - D_1(u,c)\|_{\leb 2([0,T];\leb p(\Omega))}     \\
        \leq C(\|\upt-u\|_{\changes{\leb 2}([0,T];\leb p(\Omega))}
    +    \|\cmt-c\|_{\changes{\leb 2}([0,T];\leb p(\Omega))})
  \end{multline}
  and \eqref{strongconvergenceconvection} then follows by 
  using \eqref{strongu} and
  \eqref{strongc}.}  Adopting similar techniques and noting from
  \eqref{primes} that $h'(u)=30u^2(u-1)^2$, from
  \eqref{d1bounded}, \eqref{approxestimatesorder}, \eqref{sabc}
  and \eqref{strongconvergenceforcing} we conclude
  \eqref{strongconvergencehprime}.
  
  Noting from
  \eqref{interpolantsintime} that $\tau \partial_t \hat{u}_\tau =
  (\umt-\upt)$ for $t\in (t^{k-1},t^k)$, from \eqref{d1bounded}
  and \eqref{approxestimatesorder} we have
  \begin{multline}
    \|\mcpp(\upt-1)\upt(\umt-\upt)\|_{\leb 2([0,T];\leb 2(\Omega))}
    \\
    \leq C\|\upt-\umt\|_{\leb 2([0,T];\leb 2(\Omega))} = C\tau
  \end{multline}
  which concludes the proof. 
\end{proof}
\begin{lemma}[weak convergence]
  \label{lemweakconvergence}
     \changes{For some $1\leq r\leq
    2p/(p+2)$, with $1\leq p < \infty$ if $\dim\Omega = 2$ and $1\leq p < 6$ if $\dim\Omega = 3$, we have the following}
  as $\tau_n\to 0$
  \begin{subequations}
    \begin{align}
      \label{weakconvergencediffusionterm}
      D(u^+_{\tau_n})\nabla c^+_{\tau_n} \rightharpoonup D(u)\nabla c
      \text{ in }
      \changes{\leb 1}([0,T]; (\changes{\leb r}(\Omega))^d),
      \\
      \label{weakconvergencetransportterm}
      D_1(u^+_{\tau_n},c^-_{\tau_n})\nabla\phi^+_{\tau_n} \rightharpoonup D_1(u,c)\nabla\phi
      \text{ in }
      \changes{\leb 1}([0,T]; (\changes{\leb r}(\Omega))^d),
      \\
      \label{weakconvergenceconductivity}
      \sigma(u^+_{\tau_n})\nabla\phi_{\tau_n}^+ \rightharpoonup \sigma(u)\nabla\phi
      \text{ in }
      \changes{\leb 1}([0,T]; (\changes{\leb r}(\Omega))^d),
      \\
      \label{weakconvergenceanisotropy}
      \mat A(\nabla u^+_{\tau_n})\nabla u^+_{\tau_n} \rightharpoonup \mat A(\nabla u)\nabla u \text{ in } \leb 2([0,T];(\leb 2(\Omega))^d).
    \end{align}
  \end{subequations}
\end{lemma}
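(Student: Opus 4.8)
The plan is to handle the three convergences \eqref{weakconvergencediffusionterm}--\eqref{weakconvergenceconductivity} by a single \emph{strong-times-weak} product argument, and to treat the anisotropic flux \eqref{weakconvergenceanisotropy} separately, since it is the only product whose coefficient is a nonlinear function of a merely weakly convergent quantity, namely $\nabla u^+_{\tau_n}$. For the first three the scalar coefficient depends on $u$ (or on $(u,c)$), which converges \emph{strongly}, so the product converges weakly; for the last the coefficient $\mat A(\nabla u^+_{\tau_n})$ does not, and a monotonicity argument is unavoidable.

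For the generic strong-times-weak step, consider \eqref{weakconvergencediffusionterm} and fix a test field $\vec\psi\in\leb2([0,T];(\leb2(\Omega))^d)$. I would write
\[
\int_0^T\!\!\int_\Omega D(u^+_{\tau_n})\nabla c^+_{\tau_n}\cdot\vec\psi
=\int_0^T\!\!\int_\Omega \nabla c^+_{\tau_n}\cdot\big(D(u^+_{\tau_n})\vec\psi\big).
\]
By the proof of Lemma~\ref{lemstrongconvergencediffcond} we have $u^+_{\tau_n}\to u$ almost everywhere along the extracted subsequence, and $D$ is bounded and continuous, so $D(u^+_{\tau_n})\vec\psi\to D(u)\vec\psi$ pointwise with dominating bound $\|D\|_{\infty}|\vec\psi|\in\leb2([0,T];(\leb2(\Omega))^d)$; dominated convergence gives $D(u^+_{\tau_n})\vec\psi\to D(u)\vec\psi$ \emph{strongly}. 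Since $\nabla c^+_{\tau_n}\rightharpoonup\nabla c$ weakly in the same Hilbert space by \eqref{L2H1convconce}, the pairing of a weakly with a strongly convergent sequence converges, which is exactly \eqref{weakconvergencediffusionterm} once $\vec\psi$ is arbitrary. The convergences \eqref{weakconvergencetransportterm} and \eqref{weakconvergenceconductivity} are obtained identically, moving $D_1(u^+_{\tau_n},c^-_{\tau_n})$, respectively $\sigma(u^+_{\tau_n})$, onto $\vec\psi$, using their strong convergence \eqref{strongconvergenceconvection} and \eqref{strongconvergenceconductivity} together with their boundedness (for the dominating function) and the weak gradient convergence \eqref{L2H2convelectric} of $\phi^+_{\tau_n}$.

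The main obstacle is \eqref{weakconvergenceanisotropy}, where the splitting above fails because $\mat A(\nabla u^+_{\tau_n})$ cannot be shown to converge strongly: \eqref{L2H1convorder} only gives $\nabla u^+_{\tau_n}\rightharpoonup\nabla u$ weakly. Here I would argue by Minty--Browder monotonicity. The bounds \eqref{anisotropicbounds} show that $\mat A(\nabla u^+_{\tau_n})\nabla u^+_{\tau_n}$ is bounded in $\leb2([0,T];(\leb2(\Omega))^d)$, so along a further subsequence it converges weakly to some $\vec\chi$; the goal is to prove $\vec\chi=\mat A(\nabla u)\nabla u$. Passing to the limit in the continuous scheme \eqref{approxorder}, using the weak convergence of $\hat u_{\tau_n}'$ from \eqref{L2L2convtimeorder} and the strong convergences of $g'(u^+_{\tau_n})$ and $m(c^-_{\tau_n})h'(u^+_{\tau_n})$ from Lemmas~\ref{lemstrongconvergencediffcond} and~\ref{lemstrongconvergenceprod}, yields a limit identity in which $\vec\chi$ plays the role of the diffusion flux.

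To identify $\vec\chi$, the decisive quantity is $\int_0^T\!\int_\Omega\mat A(\nabla u^+_{\tau_n})\nabla u^+_{\tau_n}\cdot\nabla u^+_{\tau_n}$, which I would extract by testing \eqref{approxorder} with $v=u^+_{\tau_n}$, integrating in time, and solving for the anisotropy term. Its limit is controlled because $\int_0^T\epsilon^2(\hat u_{\tau_n}',u^+_{\tau_n})$ passes to the limit (weak $\times$ strong, as $u^+_{\tau_n}\to u$ strongly in $\leb2([0,T];\leb2(\Omega))$ via Aubin--Lions in the proof of Lemma~\ref{lemstrongconvergencediffcond}) and the remaining terms converge by the same strong convergences. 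This pins $\limsup_n\int_0^T\!\int_\Omega\mat A(\nabla u^+_{\tau_n})\nabla u^+_{\tau_n}\cdot\nabla u^+_{\tau_n}$ and matches it to the value predicted by testing the limit identity with $u$. Combining this with the monotonicity inequality
\[
\int_0^T\!\!\int_\Omega\big(\mat A(\nabla u^+_{\tau_n})\nabla u^+_{\tau_n}-\mat A(\nabla w)\nabla w\big)\cdot\nabla(u^+_{\tau_n}-w)\ge0,
\]
valid for every $w\in\leb2([0,T];\sobh1(\Omega))$ by monotonicity of $J_a'$ (see \eqref{convexineq}), and letting $n\to\infty$ gives $\int_0^T\!\int_\Omega(\vec\chi-\mat A(\nabla w)\nabla w)\cdot\nabla(u-w)\ge0$. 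Hemicontinuity of $J_a'$ then lets me set $w=u\pm\lambda v$ and send $\lambda\downarrow0$ to obtain $\vec\chi=\mat A(\nabla u)\nabla u$, which is \eqref{weakconvergenceanisotropy}. I expect the delicate point to be the bookkeeping of this $\limsup$ identity, i.e.\ ensuring the limit of the discrete anisotropic energy coincides exactly with the continuous one.
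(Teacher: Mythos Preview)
Your proposal is correct and follows essentially the same approach as the paper: a strong-times-weak product argument for \eqref{weakconvergencediffusionterm}--\eqref{weakconvergenceconductivity} and a Minty--Browder monotonicity argument for \eqref{weakconvergenceanisotropy}. The only cosmetic difference is that the paper splits $D(u^+_{\tau_n})\nabla c^+_{\tau_n}-D(u)\nabla c$ by add--subtract and restricts to smooth test functions, whereas you absorb the coefficient into the test function via dominated convergence; both are the same ``weak gradient paired with strongly convergent multiplier'' mechanism, and for \eqref{weakconvergenceanisotropy} your Minty sketch matches the paper's (which in turn defers the details to \citet{burmanExistenceSolutionsAnisotropic2003}).
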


\begin{proof}
\changes{%
    From Lemmas \ref{lemstrongconvergencediffcond} and
    \ref{lemselectionofthelimits}, for $1\leq p < \infty$ if $\dim\Omega = 2$ and $1\leq p < 6$ if $\dim\Omega = 3$, we have 
    \begin{equation}
      \begin{split}
        D(u_{\tau_n}^+)\to D(u)\text{ in }\leb2(0,T;\leb p(\W))
        \\
        \grad c_{\tau_n}^+\weaklyto\grad c
        \text{ in }\leb2(0,T;(\leb2(\W))^d).
      \end{split}
    \end{equation}
    From the extended result of \citet[Prop. II.2.30 on p64 and p92--93]{boyer_mathematical_2013}
    we have that
    \begin{equation}
      D(u_{\tau_n}^+)\grad c_{\tau_n}^+
      \weaklyto
      D(u)\grad c
      \text{ in }
      \leb1(0,T;(\leb r(\W))^d)
      \text{ with }
      r=\frac{2p}{p+2},
    \end{equation}
    which establishes (\ref{weakconvergencediffusionterm}).
    Similar arguments lead to (\ref{weakconvergenceconductivity}) and (\ref{weakconvergencetransportterm}). 
    Relation \eqref{weakconvergenceanisotropy} follows immediately from \citep[Lemma 5.4,
      p1152]{burmanExistenceSolutionsAnisotropic2003}.
  }
\end{proof}

\subsection{Proof of Theorem \ref{thmexistence}}\label{proofmainresult}
Now we can prove our main result, Theorem \ref{thmexistence}.

\begin{proof}
  In Theorem \ref{thmdiscreteexistence} we established the existence of a solution $(u^k,c^k,\phi^k)\in(\sobh 1(\Omega))^2\times\sobhz[\Gamma_1\cup\Gamma_2]1(\Omega)$ of \eqref{discretisedorderparameter}--\eqref{discretisedconcentration}. From Lemmas \ref{lemselectionofthelimits}--\ref{lemweakconvergence} we conclude the existence of a solution $(u,c,\phi)\in(\sobh 1(\Omega))^2\times\sobhz[\Gamma_1\cup\Gamma_2]1(\Omega)$ of \eqref{orderparameterfinalform}--\eqref{electricpotentialfinalform}, with $(u(0),c(0))=(u_0,c_0)\in\sobh 1(\Omega)\times\leb 2(\Omega)$ and 
  $u\in\leb 2([0,T];\sobh 1(\Omega))\cap\sobh 1([0,T];\leb 2(\Omega))\cap\leb {\infty}([0,T];\leb \infty(\Omega))$, $c\in\leb 2([0,T];\sobh 1(\Omega))\cap\sobh 1([0,T];(\sobh 1(\Omega))')\cap\leb {\infty}([0,T];\leb 2(\Omega))$ and $\phi\in\leb 2([0,T];\sobhz[\Gamma_1\cup\Gamma_2]1(\Omega))$. 
\end{proof}

\section{Numerical Simulations}\label{sec5}

In this section we present a finite element discretization of \eqref{orderparameterfinalform}-\eqref{electricpotentialfinalform} together with numerical examples that illustrate how dendritic structures can arise from the model. 

\subsection{Finite element approximation}\label{sec5.1}

Until now, for ease of presentation, we have considered a simplified version of the model from \citet{chenModulationDendriticPatterns2015} in which, where possible, all physical parameters were set to $1$. However in this section, in order to produce computational simulations in which dendritic patterns arise, we consider the original model from \citet{chenModulationDendriticPatterns2015} that has the physical parameters reinserted.

As in Section \ref{sec3}, we partition the time interval $[0,T]$
into $K$ equidistant steps, $0 = t_0 < t_1 < \ldots < t_{K-1} < t_K =
T$, so that for $k=1,\ldots,K$, $t_k := k\tau$.  We let
$\mathscr{T}_h$ be a regular triangulation of $\Omega$ into disjoint
open simplices $\mathcal{T}$ and we define $h :=
\max_{\mathcal{T}\in\mathscr{T}_h} \diam {\mathcal{T}}$ the maximal
element size of $\mathscr{T}_h$.

Associated with ${\mathscr{T}}_h$ are the finite element spaces
\begin{equation*}
  \begin{aligned}
    V_h = \{v\in\cont 0(\bar{\Omega});~  v|_{\mathcal{T}}\in P_2(\mathcal{T}),~\forall~ \mathcal{T}\in\mathscr{T}_h\}\subset \sobh 1(\Omega),
    \\
    V_{0,h} = \{v\in\cont 0(\bar{\Omega});~ v = 0 \text{ on } \Sigma\subseteq\partial\Omega;~  v|_{\mathcal{T}}\in P_2(\mathcal{T}),~\forall~ \mathcal{T}\in\mathscr{T}_h\}\subset\sobhz[\Sigma] 1(\Omega),
  \end{aligned}
\end{equation*}
where $\Sigma = \Gamma_1\cup\Gamma_2$ and $P_2(\mathcal{T})$ denotes the set of polynomials of maximum degree $2$  on $\mathcal{T}$.
\changes{$P_1(\mathcal{T})$ would be an alternative choice, but $P_2(\mathcal T)$ elements lead to smoother visual results at interfacial layers.}

Using a standard Galerkin finite element method in space and a semi-implicit Euler discretization in time we obtain the following discretisation of a variant of \eqref{orderparameterfinalform}-\eqref{electricpotentialfinalform} in which the physical parameters $\epsilon, \gamma, \nu, \mu$ from \citet{chenModulationDendriticPatterns2015} have been reinstated, the boundedness imposed on the $D_1(u,c)$, see \eqref{diffusionoftransport}, is removed and the electrochemical reaction rate $m(c)$ is replaced with a modified version $\tilde{m}(c)$ to highlight its exponential structure, see  \citep[Appendix B]{chenModulationDendriticPatterns2015}:

Set $u^0_h = u_0(\vec x)$ and $c^0_h = c_0(\vec x)$, then for $k=1,2\ldots,K$, given $(u^{k-1}_h,c^{k-1}_h)\in [V_h]^2$ find $(u^k_h,c^k_h,\phi^k_h)\in [V_h]^2\times V_{0,h}$ such that for all $(\chi_h,\eta_h)\in V_h\times V_{0,h}$
\begin{subequations}
  \begin{gather}
    \label{fullydiscretisedorderparameter}
    \begin{split}
      \epsilon^{2}( d_{\tau} u^k_h,\chi_h)
      +
      (\mat A(\nabla u^{k-1}_h)\nabla u^k_h,\nabla \chi_h)
      +
      \gamma(g'(u^{k-1}_h),\chi_h)
      \\
      = ( \tilde{m}(c^{k-1}_h)h'(u^{k-1}_h),\chi_h),
    \end{split}
    \\
    \label{fullydiscretisedelectricpotential}
    (\sigma(u^{k}_h)\nabla\phi^k_h,\nabla \eta_h)
    =
    -\nu(d_{\tau} u^{k}_h,\eta_h),
    \\
    \label{fullydiscretisedconcentration}
    ( d_{\tau} c^k_h,\chi_h) + (D(u^{k}_h)\nabla c^k,\nabla \chi_h)
    +
    (D(u^{k}_h) c^{k-1}_h\nabla\phi^{k}_h,\nabla \chi_h)
    =
    -\mu( d_{\tau} u^{k}_h,\chi_h),
  \end{gather}
\end{subequations}
where $\tilde{m}(c)=C_1(\expp{-C_2}-c\expp{C_2})$.  The physical
parameters are defined, together with their values in the
computations, in Table \ref{table1}. We note that
\eqref{fullydiscretisedorderparameter}--\eqref{fullydiscretisedconcentration}
are linear in $u^k$, $\phi^k$ and $c^k$ respectively.
\changes{Also, the term $\sigma'(u^k_h)\partial_{x_1}u^k_h$ is removed from
  \eqref{fullydiscretisedelectricpotential} and incorporated in the boundary
  conditions for $\phi^k_h$.}

The computational results we present are produced by implementing a standard adaptive strategy, in particular, we use a gradient-based indicator which involves the gradient of the solution of the phase-field equation and the reaction-diffusion equation, given by
\begin{equation}
  \eta_{\mathcal{T}}:= \sqrt{\abs{\nabla u^k_h}^2_{\mathcal{T}} + \abs{\nabla c^k_h}^2_{\mathcal{T}}},
\end{equation}
such that simplicies on which $\eta_{\mathcal{T}}\geq 160$ are refined and simplices on which $\eta_{\mathcal{T}}\leq 80$ are coarsened.

\subsection{Numerical examples}\label{sec5.2}
In this section we present numerical simulations of dendritic crystal growth. 

We set $d=2$ and take $\Omega = [0,9]\times[-9,9]$ and $T=0.5$. For an
initial set-up we place a pill-shaped solid seed
\citep[cf.][Fig 1]{akolkarModelingDendriteGrowth2014}
at the mid-point of the left-hand boundary of $\Omega$ and set the ion
concentration of \ch{Li+} to be $0$ along the left hand boundary of
$\Omega$, which represents the right most edge of the electrode, and
$1$ in the reminder of the domain.  Thus giving rise to the following
initial data for the order parameter and the concentration:
\begin{equation}
  u_0(x_1,x_2) = \Bigg\{
  \begin{array}{ll}
    1, & (x_1 - r_0 - \sqrt{r_0^2 - x_2^2})(x_1 + r_0 + \sqrt{r_0^2 - x_2^2})(x_2^2 - r_0^2)\geq 0,\\
    0, & \text{elsewhere}.
  \end{array}
\end{equation}
and
\begin{equation}\label{c0}
  c_0(x_1,x_2) = \Bigg\{
  \begin{array}{ll}
    0, & x_1 = 0,\\
    1, & \text{elsewhere.}
  \end{array}
\end{equation}
In the simulations presented below we set $r_0=0.1$.

For the implementation of the numerical scheme we used the DUNE Python
module, \citet{dednerDunePythonModule2018}, and the DUNE Alugrid
module, \citet{alkamperDUNEALUGridModule2015}.  Since the solution is
symmetric in the $x_2$-direction, for the sake of computational cost,
we reduced our computations from the rectangular domain
$\Omega=[0,9]\times [-9,9]$ to the square domain
$\Omega'=[0,9]^2$. The values of the parameters used in the
simulations are presented in Table \ref{table1}.

We consider the polar form of the anisotropy function \eqref{anisotropy_function}:
\begin{equation}\label{anisotropy_function_polar}
  \tilde{a}(\theta) = a_0(1 + 4\delta \cos(4\theta)),
\end{equation}
with $\theta = \arctan(\partial_{x_2} u/\changes{\partial_{x_1} u})$, %
which yields the anisotropy tensor 
\begin{equation}
  \mat A(\theta)
  =
  \begin{bmatrix}
    \tilde{a}^2(\theta) & -\tilde{a}(\theta)\tilde{a}'(\theta)
    \\
    \tilde{a}(\theta)\tilde{a}'(\theta) & \tilde{a}^2(\theta)
  \end{bmatrix}.
\end{equation}
We present four sets of results, Figure \ref{systemfigure} -- Figure
\ref{fignoise}. In Figure \ref{systemfigure} we show a simulation of
the evolution of a dendritic structure, while in Figures
\ref{comparisondeltas} and \ref{comparisonbetac} we examine how the
dendritic structure is affected by variations in the anisotropy
strength $\delta$ and in the parameter $\mu$ respectively. In Figure
\ref{fignoise} we investigate the effect adding stochastic noise has
on the dendritic structure.

Figure \ref{systemfigure} depicts the numerical approximations of the
order parameter, $u_h^k$ (top row), the concentration of lithium
atoms, $c_h^k$ (middle row), and the electrostatic potential,
$\phi_h^k$ (bottom row), each displayed at $t_k = 0.061$ (left) $t_k =
0.244$ (middle) and $t_k = 0.427$ (right). In these results we set
$\delta = 0.05$ and $\mu = 2$. From the results for the order
parameter we clearly see the formation of a dendritic structure, while
a more diffuse form of this structure is mirrored in the concentration
results and a very diffuse underlying dendritic structure can also be
observed in the electric potential results, with the region, in which
$\abs{\phi^k_h}$ attains its maximum value, growing as the dendrite
grows, thus eventually causing the battery to fail.  When comparing
the dendritic structure that forms in Figure \ref{systemfigure} to the
dendritic structures that are displayed in Figure 4 in
\citet{chenModulationDendriticPatterns2015} and Figure 3a in
\citet{muNumericalSimulationFactors2019} we see that in Figure
\ref{systemfigure} the structure is smooth and does not form the side
branches perpendicular to the growth direction of the dendritic
structure, that can be observed in
\citet{chenModulationDendriticPatterns2015} and
\citet{muNumericalSimulationFactors2019}. In Figures
\ref{comparisondeltas} and \ref{fignoise} below, we see that dendritic
structures with side branches can be observed by increasing the
anisotropy strength, $\delta$, or adding stochastic noise.

In Figure \ref{comparisondeltas} we examine how the anisotropy
strength, $\delta$, affects the formation of dendritic crystals. To
this end we recall the bound $\delta<1/15 \approx 0.067$ that we
impose in assumptions \eqref{assump}, recall \eqref{assump}, in order
for our theoretical results to hold. The specific nature of the bound
can be understood from the proof of Lemma \ref{lem:hessianpositive}.
Figure \ref{comparisondeltas} displays plots of the numerical
approximation of the order parameter, $u_h^k$, at $t_k=0.366$ for $\mu
= 2$ and six values of $\delta$. From this figure we see that the
anisotropy strength has a notable impact on the rate at which the
dendritic structure grows, in particular the speed of growth increases
as $\delta$ decreases. In addition, we see that the value $\delta =
1/15 \approx 0.067$ plays a significant role; in the simulations where
$\delta \ll 1/15 \approx 0.067$, i.e. Figure
\ref{orderparameterdelta0} -- Figure \ref{orderparameterdelta0.05} in
which $\delta = 0,0.01,0.02,0.05$ respectively, the geometry of the
dendritic structure is smooth, while in Figure
\ref{orderparameterdelta0.065}, where $\delta =0.065$ and hence the
condition $\delta <1/15$ is only just satisfied, the structure is no
longer smooth and small instabilities, perpendicular to the growth of
the dendritic structure, have started to form. The instabilities are
more pronounced in Figure \ref{orderparameterdelta0.1}, which shows
$\delta = 0.1$ which is significantly greater than $\delta =
1/15$. %
$\delta \geq 1/15$, the numerical scheme still produces solutions for
values of $\delta>1/15$.\\

In Figure \ref{comparisonbetac} we examine how the evolution of the order parameter, $u$, varies with the parameter $\mu$, which is related to the ratio of site density of Li and the standard bulk concentration of electrolyte solution as described in \citet{chenModulationDendriticPatterns2015}. We display plots of the numerical approximation of the order parameter, $u_h^k$, at $t_k=0.122$ for $\delta = 0.05$ and five values of $\mu$, together with, for comparison, a plot of $u_h^k$ resulting from solving the system \eqref{fullydiscretisedorderparameter}--\eqref{fullydiscretisedconcentration} with $\delta =0$ and $\mu=0$.
From this figure we see that for small values of $\mu$, i.e. $\mu=0$ and $\mu=0.5$, no dendritic structures arise and instead the geometry takes a form that is relatively radially symmetric, while for larger values of $\mu$, namely $\mu=1,1.5,2$ dendritic patterns arise, with the dendritic structure becoming more pronounced, yet growing more slowly, as $\mu$ increases. 

As observed above, in Figure \ref{systemfigure} no side branches are formed perpendicular to the growth direction of the dendritic structure, whereas in the results in \citet{chenModulationDendriticPatterns2015} and \citet{muNumericalSimulationFactors2019} side branches do form. From Figure \ref{comparisondeltas} we see that side branches are observed if the anisotropy strength, $\delta$, is taken large enough. An alternative approach to yield the formation of side branches is to add stochastic noise to the model, to this end we follow the authors in \citet{kobayashiModelingNumericalSimulations1993} and add a stochastic term, $\beta u_h^k(1-u^k_h)X$, to \eqref{fullydiscretisedorderparameter}, so that it becomes
$$
\epsilon^{2}( d_{\tau} u^k_h,\chi_h) + (\mat A(\nabla u^{k-1}_h)\nabla u^k_h,\nabla \chi_h) + \gamma(g'(u^{k-1}_h),\chi_h) = ( \tilde{m}(c^{k-1}_h)h'(u^{k-1}_h),\chi_h)
$$
\begin{equation}\label{acnoise}
  \hspace{3cm}-(\beta u_h^k(1-u^k_h)X,\chi_h), ~~\forall~\chi_h\in V_h,
\end{equation}
where $X\sim\mathcal{U}(-1/2,1/2)$ and $\beta > 0$  is a scalar that controls the noise strength. Results obtained from the model with \eqref{fullydiscretisedorderparameter} replaced by \eqref{acnoise}, and $\delta=0.05$ and $\mu=2$, are shown in Figure \ref{fignoise}, which displays plots of the numerical approximation of the order parameter, $u_h^k$, at $t_k=0.305$ for four values of $\beta$. From this figure we see that when $\beta=0.03$ side branches have started to form and that the length of these branches increases as $\beta$ increases.

\newpage

\begin{table}[h]
  \caption{Dimensionless parameters.}\label{table1}
  \begin{tabular}{ p{63mm}p{13mm}p{49mm}  }
    \hline
    Parameter & Notation & Value \\
    \hline
    Diffusion coefficient in the electrolyte  & $D^{\text{e}}$ & $3.0$  \\
    Diffusion coefficient in the electrode &  $D^{\text{s}}$ & $2.5$ \\
    Conductivity coefficient in the electrolyte & $\sigma^{\text{e}}$ & $100.0$ \\
    Conductivity coefficient in the electrode & $\sigma^{\text{s}}$ & $10^7$\\
    Interfacial mobility & $\gamma$ & $1.0$\\
    Characteristic time scale & $\epsilon^2$ & $0.0003$\\
    Anisotropy strength& $\delta$ & $0.05$ (unless otherwise specified)\\
    Interfacial thickness & $a_0$ & $0.01$\\
    Electrochemical reaction rate coefficients& $C_1$ \& $C_2$ & $1/15$ \& $1.9345$\\
    Coefficient multiplying $d_\tau u_h^k$ in \eqref{fullydiscretisedelectricpotential} & $\nu$ & $76.4$\\
    Coefficient multiplying $d_\tau u_h^k$ in  \eqref{fullydiscretisedconcentration}  & 
    $\mu$ & $2.0$ (unless otherwise specified)\\
    \hline
  \end{tabular}
  
\end{table}

\begin{figure}[H]
  \centering
  \includegraphics[width=0.275\linewidth,height=0.275\linewidth]{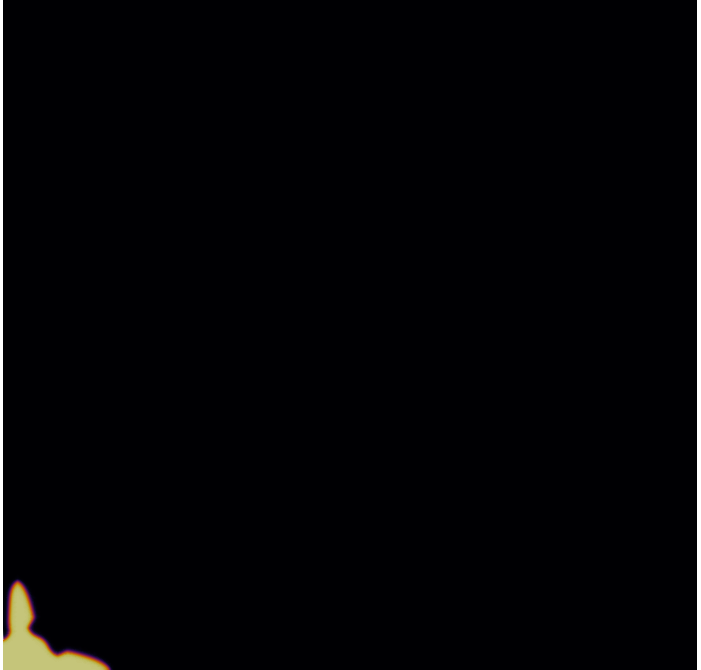}%
  \hfill %
  \includegraphics[width=0.275\linewidth,height=0.275\linewidth]{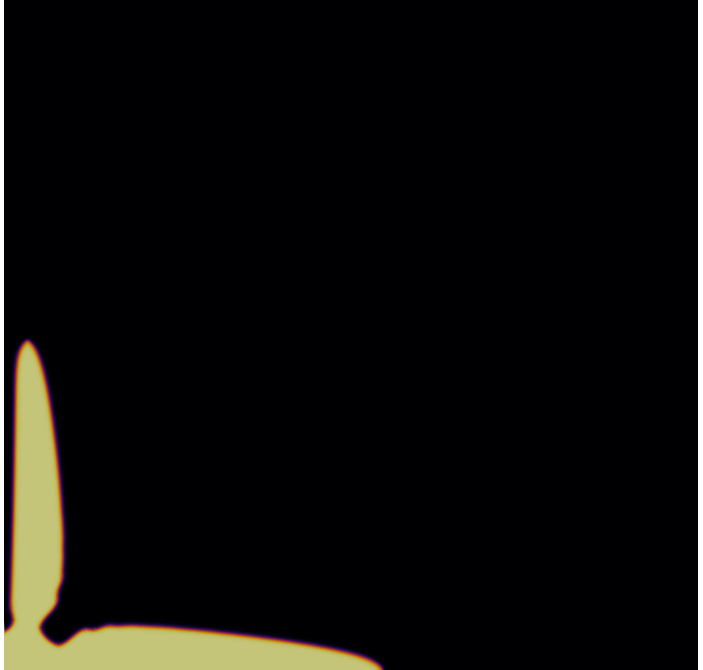}%
  \hfill %
  \includegraphics[width=0.275\linewidth,height=0.275\linewidth]{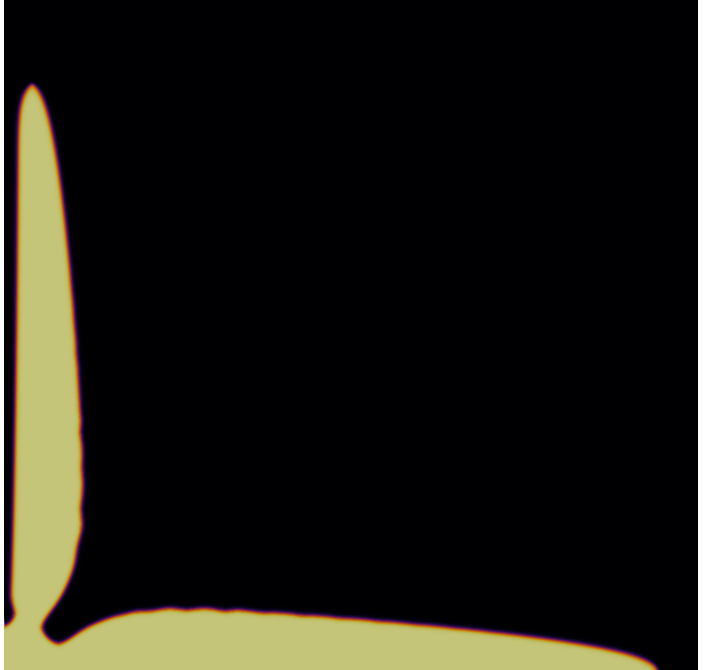}
  \hfill
  \includegraphics[width=0.13\linewidth,height=0.275\linewidth]{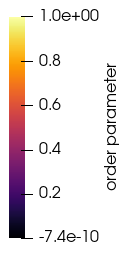}%
  \vskip\baselineskip %
  \includegraphics[width=0.275\linewidth,height=0.275\linewidth]{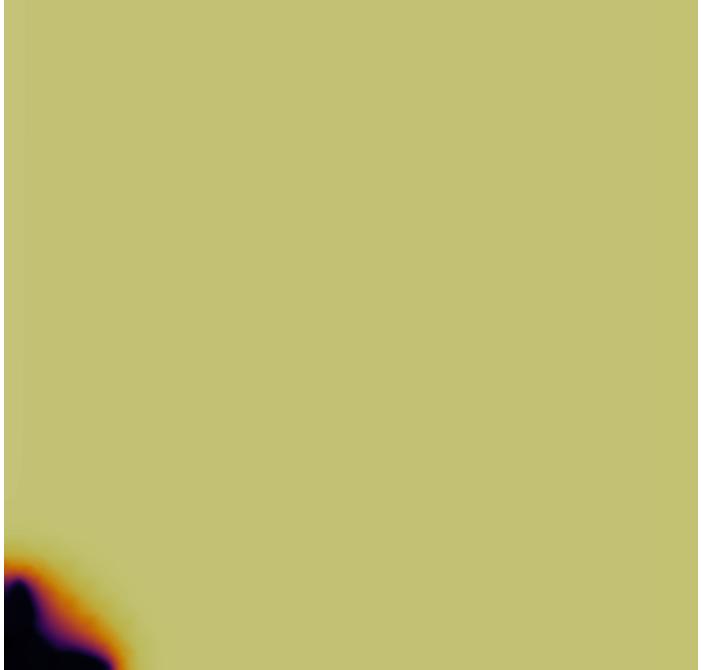}%
  \hfill %
  \includegraphics[width=0.275\linewidth,height=0.275\linewidth]{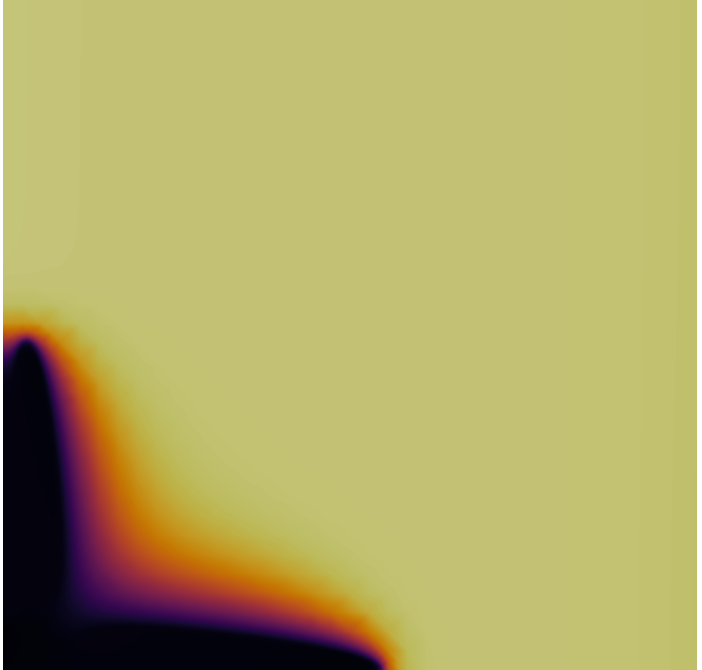}%
  \hfill %
  \includegraphics[width=0.275\linewidth,height=0.275\linewidth]{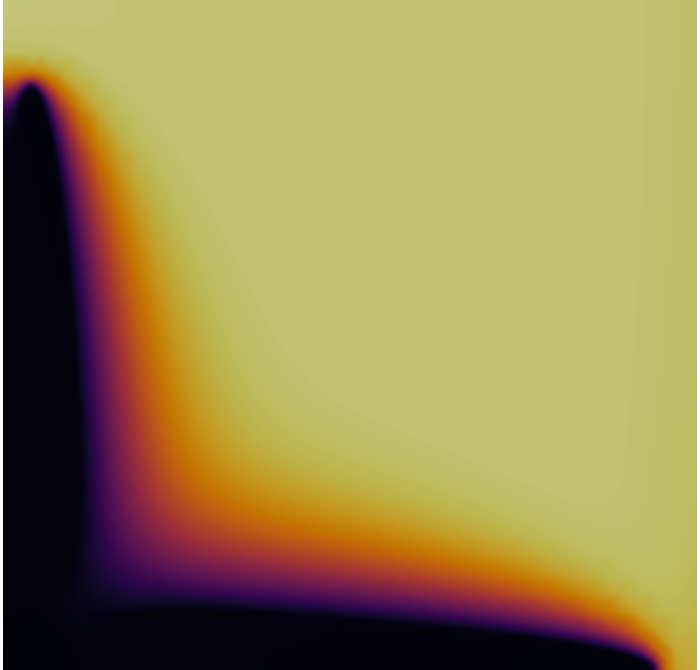}
  \hfill
  \includegraphics[width=0.13\linewidth,height=0.275\linewidth]{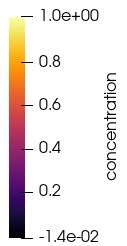}%
  \vskip\baselineskip
  \includegraphics[width=0.275\linewidth,height=0.275\linewidth]{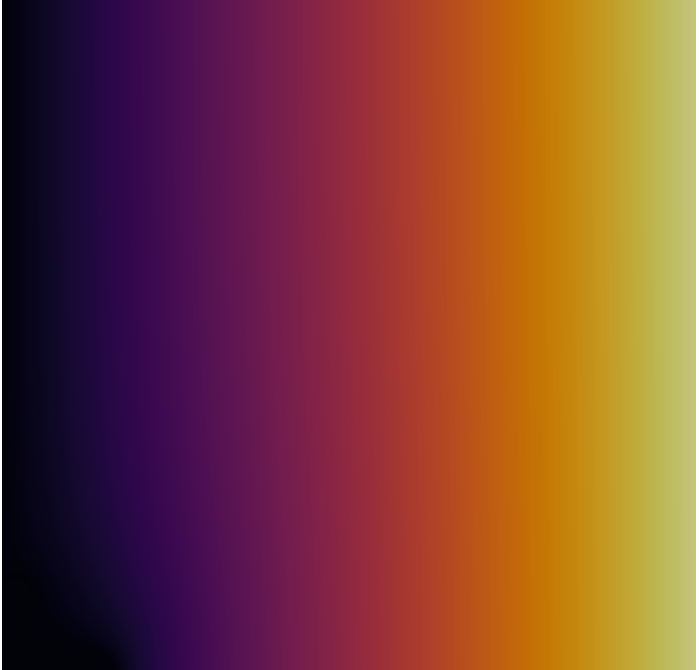}%
  \hfill %
  \includegraphics[width=0.275\linewidth,height=0.275\linewidth]{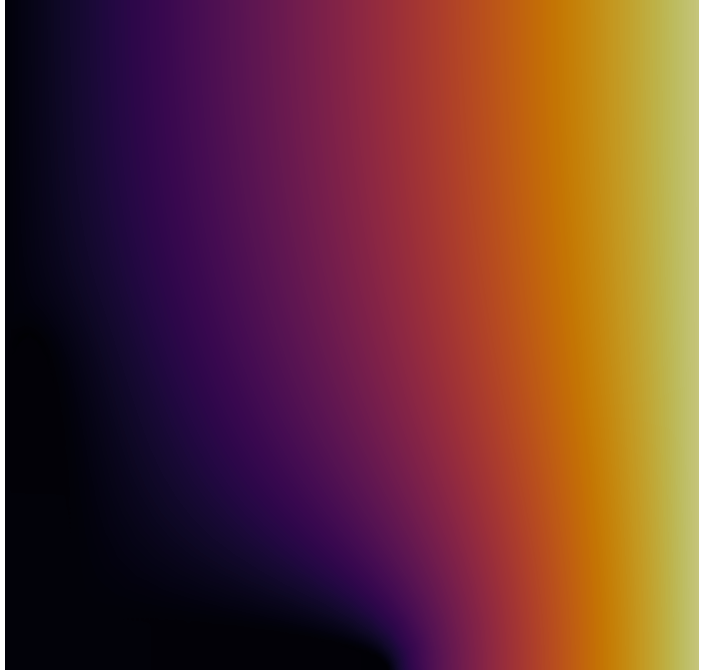}%
  \hfill %
  \includegraphics[width=0.275\linewidth,height=0.275\linewidth]{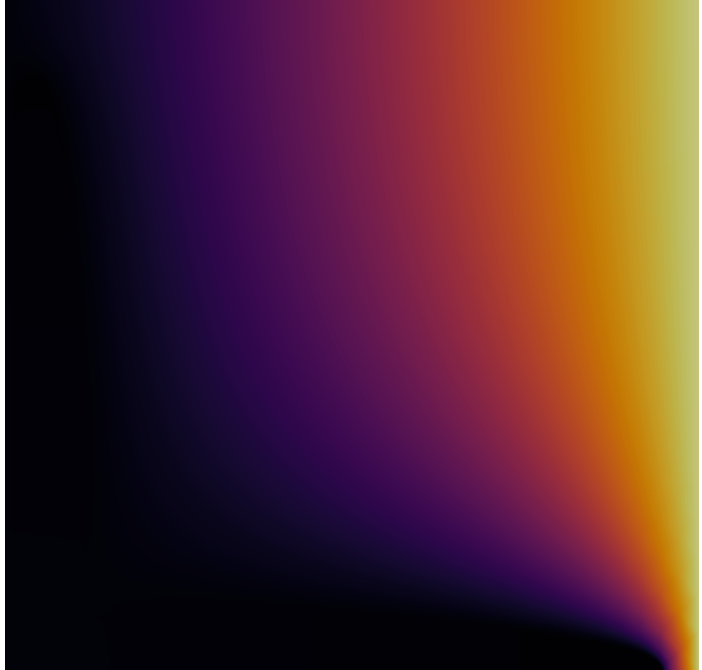}
  \hfill
  \includegraphics[width=0.13\linewidth,height=0.275\linewidth]{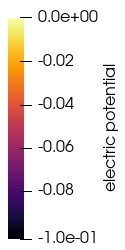}%
  \hfill %
  \caption{Dendritic formation with $\delta = 0.05$ and $\mu = 2$: $u_h^k$ (top row), $c_h^k$ (middle row) and $\phi_h^k$ (bottom row) displayed at $t_k = 0.061$ (left) $t_k = 0.244$ (middle) and $t_k = 0.427$ (right)} \label{systemfigure}
\end{figure}

\begin{figure}[H]
  \centering
  \subcaptionbox{$\delta = 0$\label{orderparameterdelta0}}{
    \includegraphics[width=0.3\linewidth,height=0.3\linewidth]{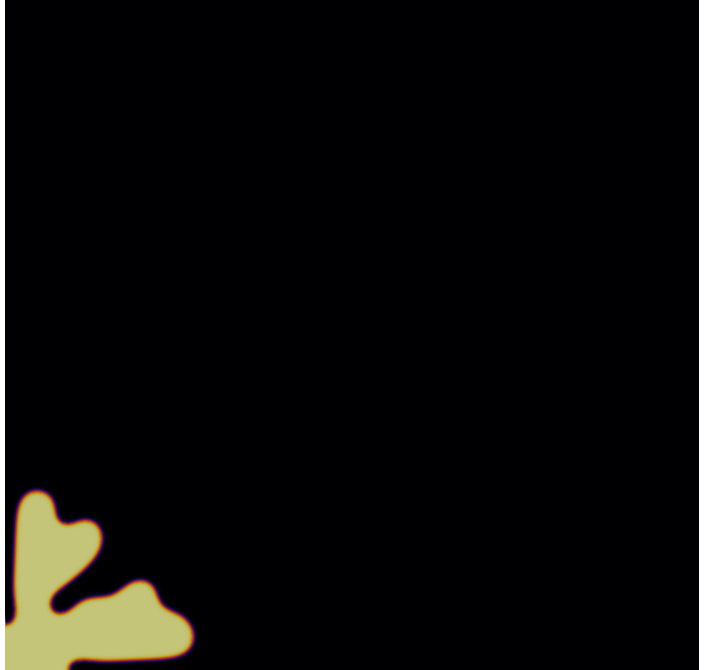}}%
  \quad %
  \subcaptionbox{$\delta = 0.01$\label{orderparameterdelta0.01}}{
    \includegraphics[width=0.3\linewidth,height=0.3\linewidth]{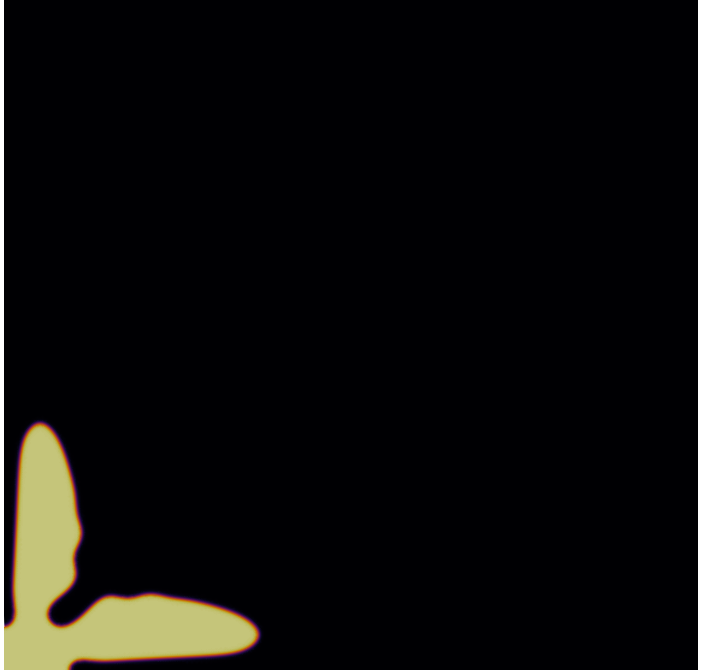}}%
  \quad%
  \subcaptionbox{$\delta = 0.02$\label{orderparameterdelta0.02}}{
    \includegraphics[width=0.3\linewidth,height=0.3\linewidth]{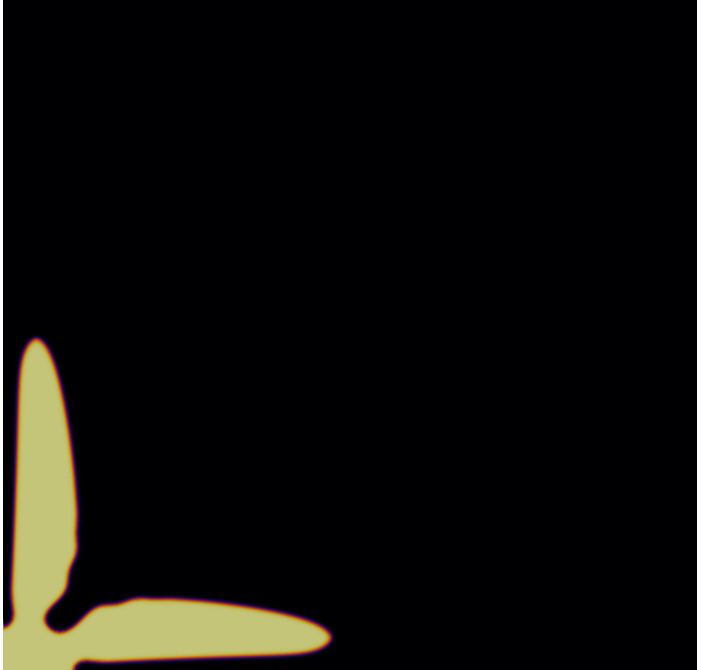}}%
  \vskip\baselineskip
  \subcaptionbox{$\delta = 0.05$\label{orderparameterdelta0.05}}{
    \includegraphics[width=0.3\linewidth,height=0.3\linewidth]{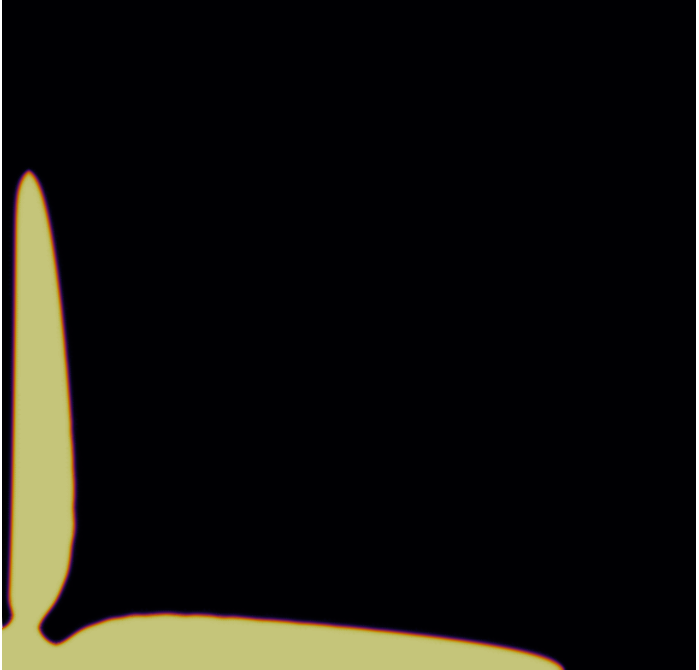}}%
  \quad %
  \subcaptionbox{$\delta = 0.065$\label{orderparameterdelta0.065}}{
    \includegraphics[width=0.3\linewidth,height=0.3\linewidth]{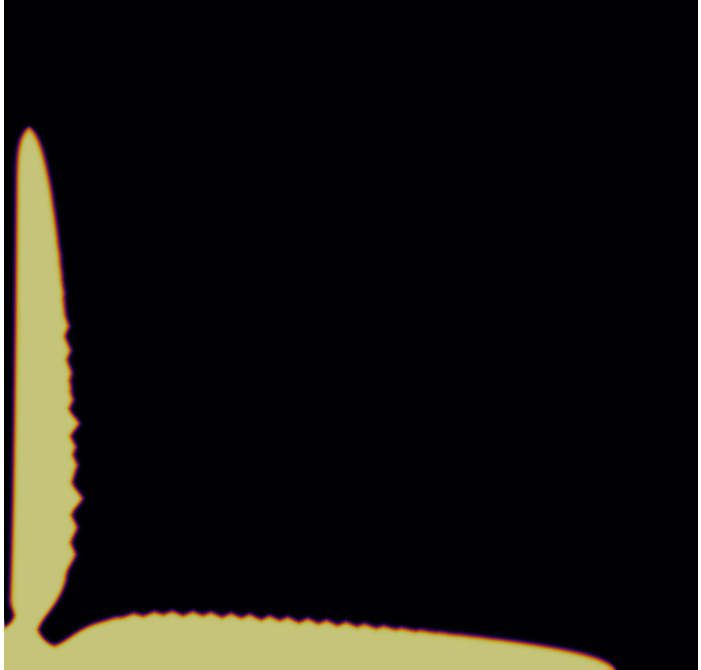}}%
  \quad %
  \subcaptionbox{$\delta = 0.1$\label{orderparameterdelta0.1}}{
    \includegraphics[width=0.3\linewidth,height=0.3\linewidth]{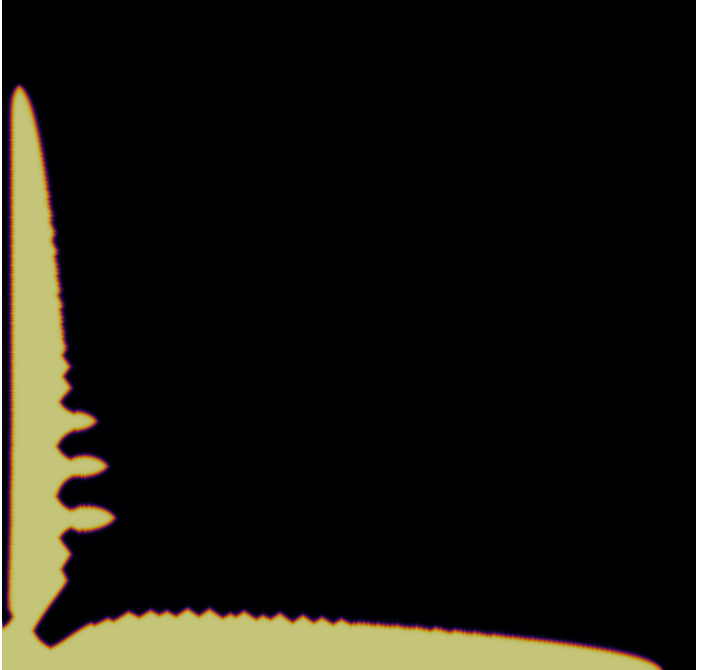}}%
  \quad %
  \caption{Comparison of $u_h^k$ at $t_k=0.366$ with $\mu = 2$ and different values for the anisotropy strength $\delta$, introduced in \eqref{anisotropy_function}.}\label{comparisondeltas}
\end{figure}

\begin{figure}[H]
  \centering
  \subcaptionbox{isotropic case\label{orderparameterisotropic}}{
    \includegraphics[width=0.3\linewidth,height=0.3\linewidth]{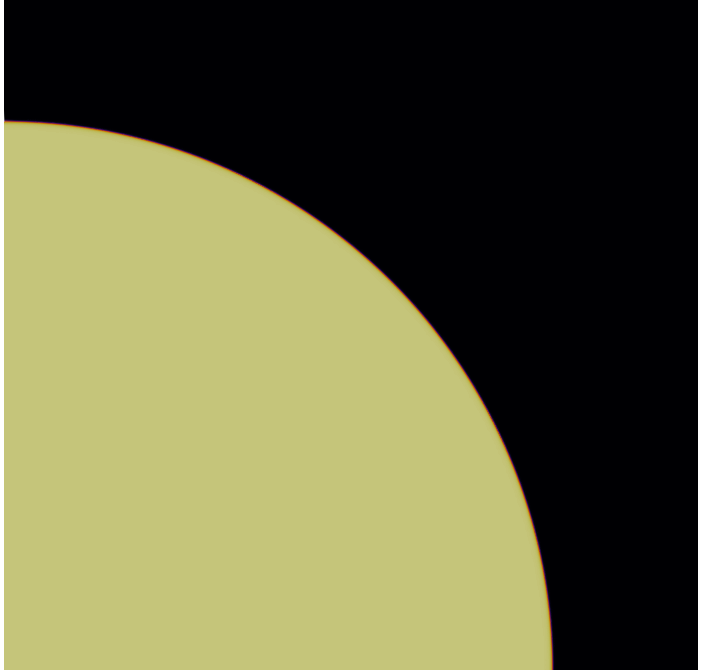}}%
  \quad %
  \subcaptionbox{$\mu = 0$\label{orderparameterbetac0}}{
    \includegraphics[width=0.3\linewidth,height=0.3\linewidth]{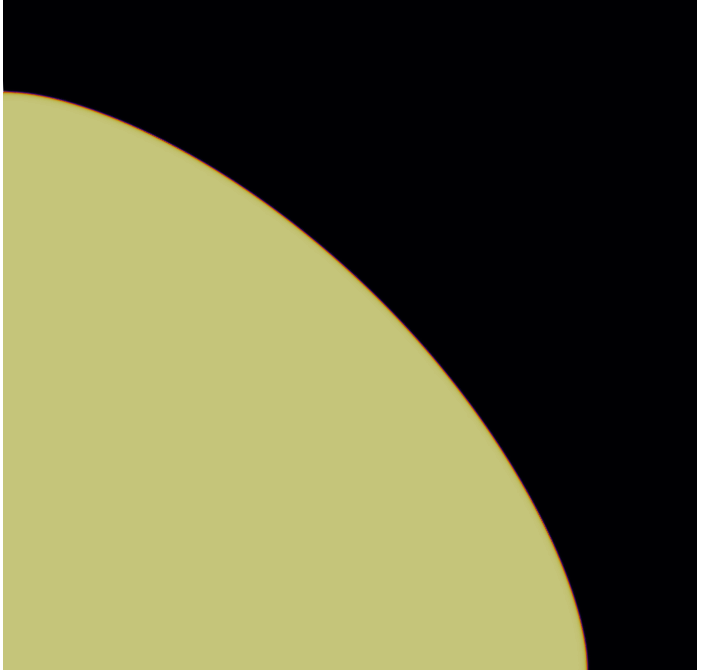}}%
  \quad %
  \subcaptionbox{$\mu = 0.5$\label{orderparameterbetac0.5}}{
    \includegraphics[width=0.3\linewidth,height=0.3\linewidth]{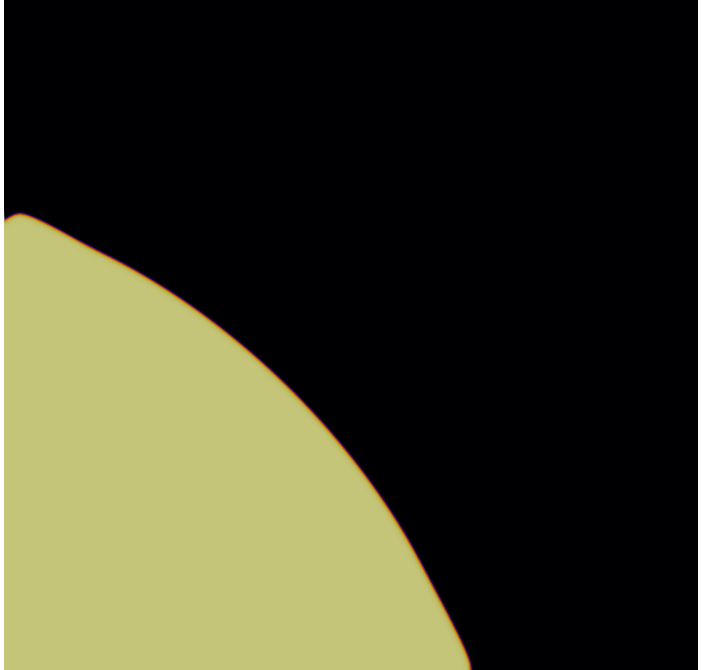}}%
  \quad%
  \subcaptionbox{$\mu = 1.0$\label{orderparameterbetac1.0}}{
    \includegraphics[width=0.3\linewidth,height=0.3\linewidth]{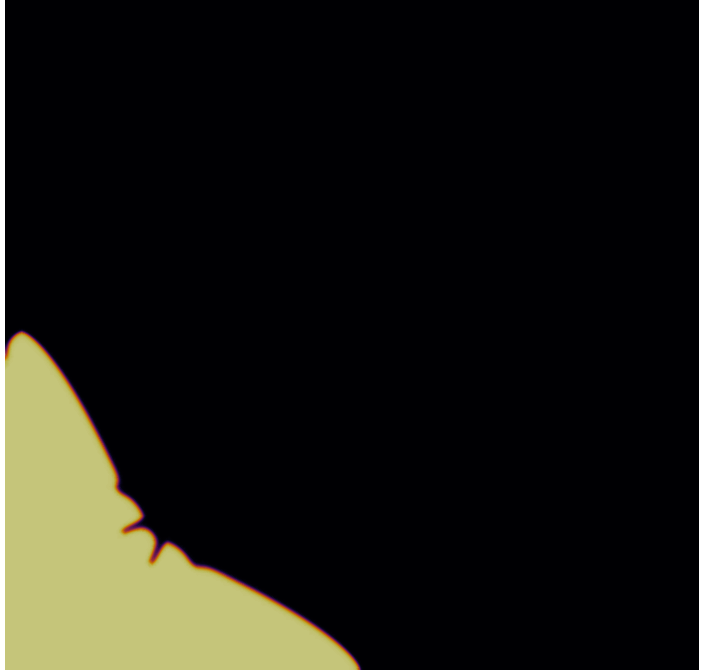}}%
  \quad
  \subcaptionbox{$\mu = 1.5$\label{orderparameterbetac1.5}}{
    \includegraphics[width=0.3\linewidth,height=0.3\linewidth]{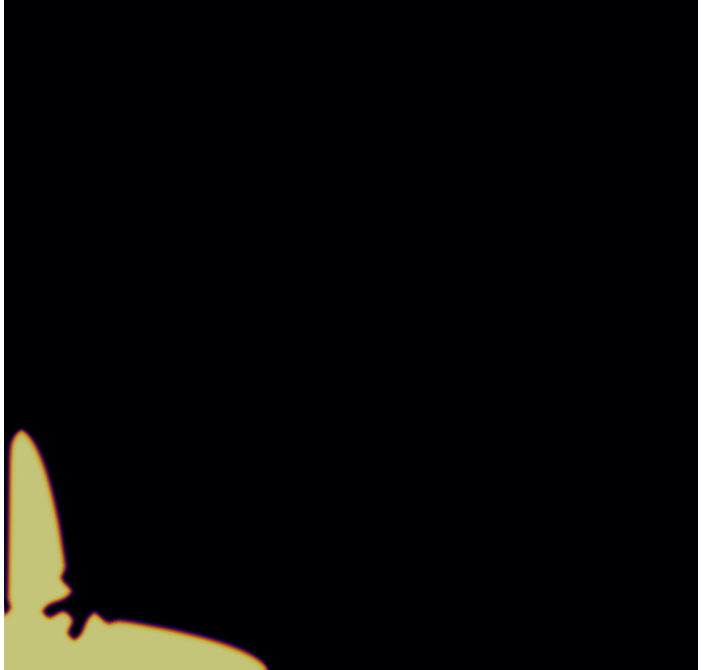}}%
  \quad %
  \subcaptionbox{$\mu = 2.0$\label{orderparameterbetac2.0}}{
    \includegraphics[width=0.3\linewidth,height=0.3\linewidth]{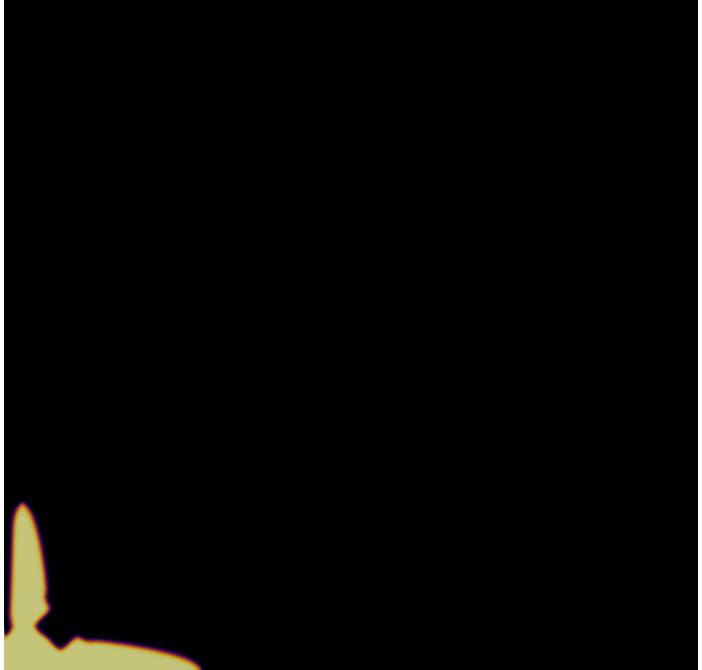}}%
  \quad %
  \caption{Comparison of $u_h^k$ at $t_k=0.122$ with $\delta = 0.05$ for different values of $\mu$.}\label{comparisonbetac}
\end{figure}

\begin{figure}[H]
  \centering
  \subcaptionbox{$\beta =0$\label{orderparameternonoise}}{
    \includegraphics[width=0.4\linewidth,height=0.4\linewidth]{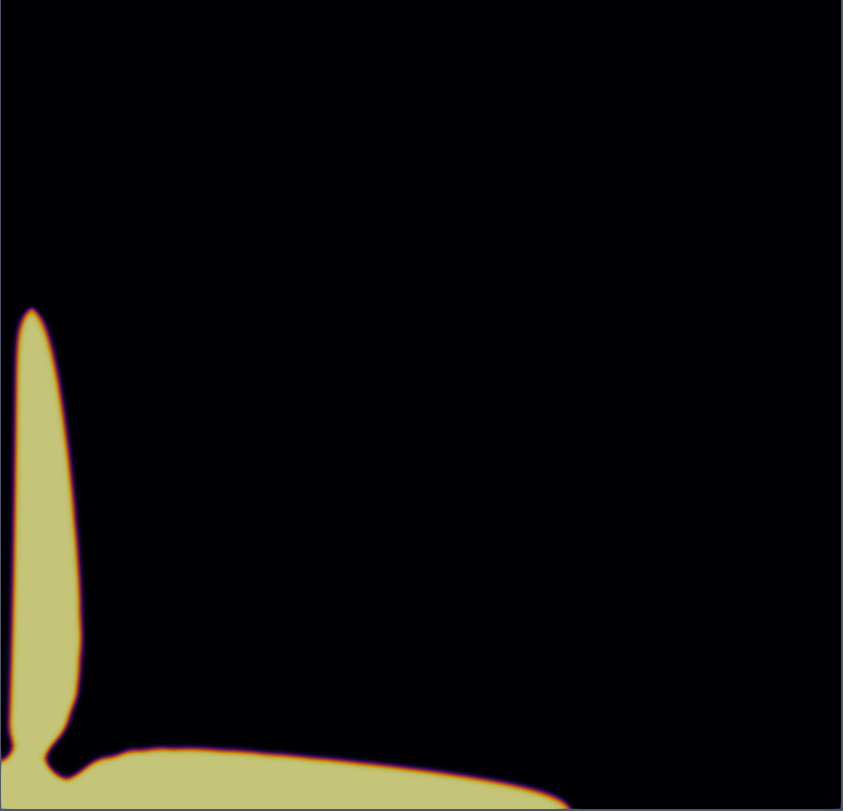}}
  \quad %
  \subcaptionbox{$\beta = 0.01$\label{orderparameternoise0.01}}{
    \includegraphics[width=0.4\linewidth,height=0.4\linewidth]{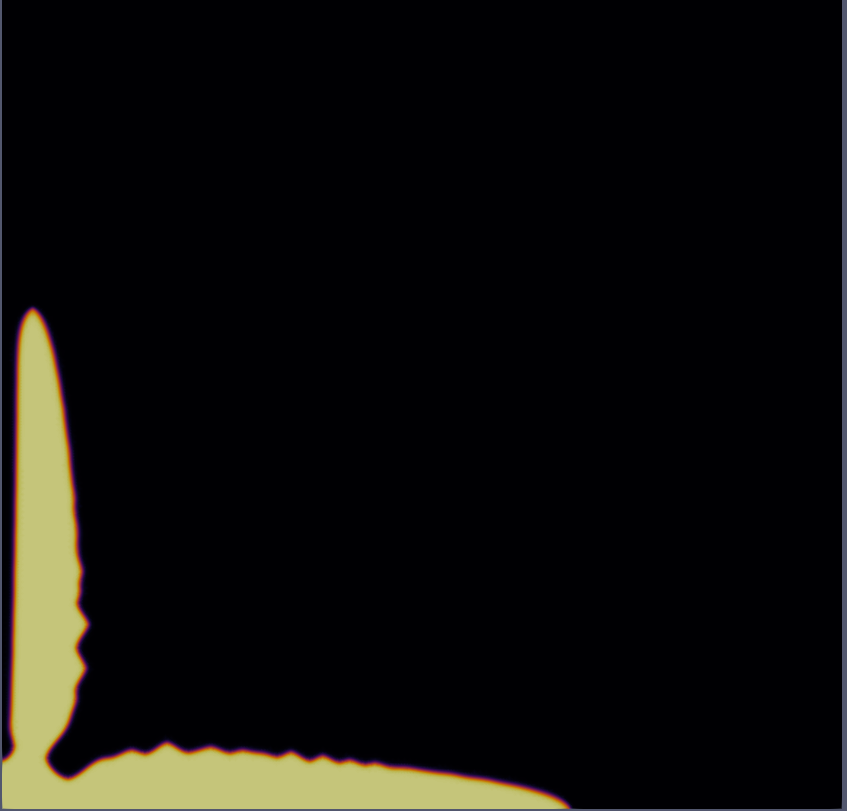}}%
  \quad %
  \subcaptionbox{$\beta = 0.03$\label{orderparameternoise0.03}}{
    \includegraphics[width=0.4\linewidth,height=0.4\linewidth]{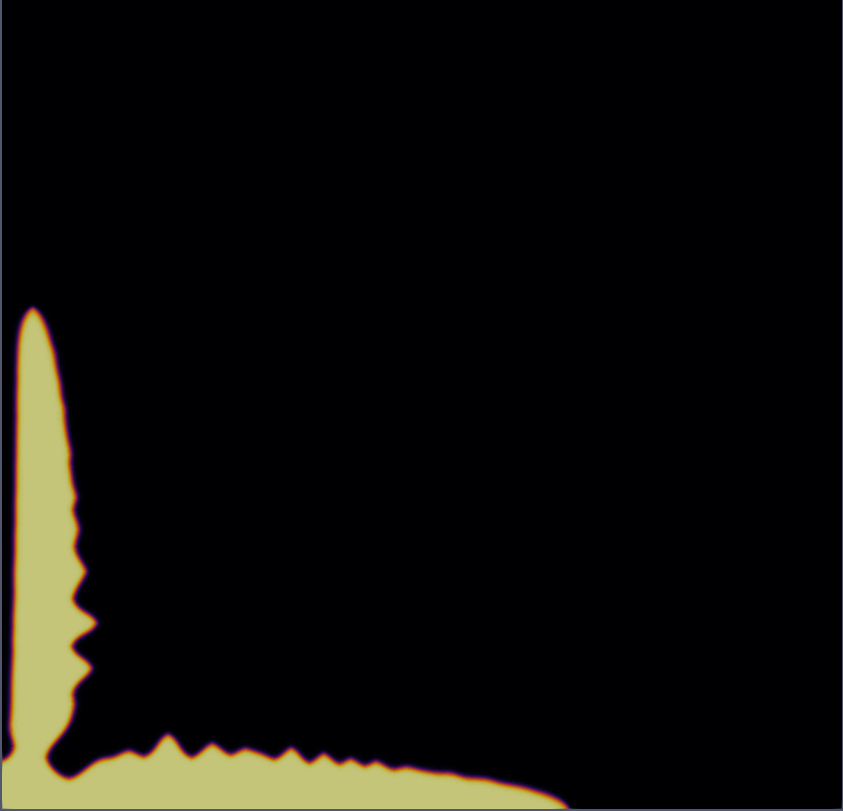}}%
  \quad %
  \subcaptionbox{$\beta = 0.1$\label{orderparameternoise0.1}}{
    \includegraphics[width=0.4\linewidth,height=0.4\linewidth]{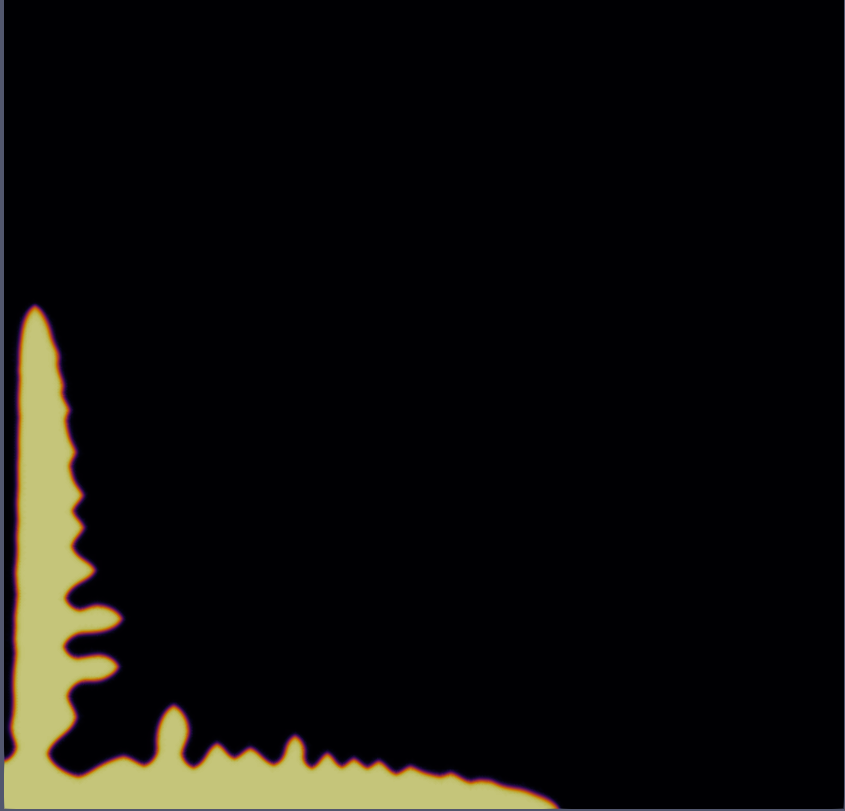}}%
  \quad %
  \caption{Comparison of $u_h^k$ at $t_k=0.305$ with $\delta = 0.05$ and $\mu = 2$, for solutions to the model with \eqref{fullydiscretisedorderparameter} replaced by \eqref{acnoise} for different values of the noise amplitude $\beta$.}\label{fignoise}
\end{figure}

\section*{Acknowledgements}
AS was partially funded by the European Union (ERC Synergy, NEMESIS,
project number 101115663).

Views and opinions expressed are however those of the authors only and
do not necessarily reflect those of the European Union or the European
Research Council Executive Agency. Neither the European Union nor the
granting authority can be held responsible for them.

OL and VS were partly supported in this work by the Dr Perry James
(Jim) Browne Research Centre on Mathematics and its Applications at
the University of Sussex, UK.

\bibliographystyle{abbrvnat}%

\begin{thebibliography}{31}
\providecommand{\natexlab}[1]{#1}
\providecommand{\url}[1]{\texttt{#1}}
\expandafter\ifx\csname urlstyle\endcsname\relax
  \providecommand{\doi}[1]{doi: #1}\else
  \providecommand{\doi}{doi: \begingroup \urlstyle{rm}\Url}\fi

\bibitem[Akolkar(2013)]{akolkarMathematicalModelDendritic2013}
R.~Akolkar.
\newblock Mathematical model of the dendritic growth during lithium
  electrodeposition.
\newblock \emph{Journal of Power Sources}, 232:\penalty0 23--28, 06 2013.
\newblock ISSN 03787753.
\newblock \doi{10.1016/j.jpowsour.2013.01.014}.
\newblock URL
  \url{https://linkinghub.elsevier.com/retrieve/pii/S0378775313000323}.

\bibitem[Akolkar(2014)]{akolkarModelingDendriteGrowth2014}
R.~Akolkar.
\newblock Modeling dendrite growth during lithium electrodeposition at
  sub-ambient temperature.
\newblock \emph{Journal of Power Sources}, 246:\penalty0 84--89, 01 2014.
\newblock ISSN 03787753.
\newblock \doi{10.1016/j.jpowsour.2013.07.056}.
\newblock URL
  \url{https://linkinghub.elsevier.com/retrieve/pii/S0378775313012469}.

\bibitem[Alk{\"a}mper et~al.(2015)Alk{\"a}mper, Dedner, Kl{\"o}fkorn, and
  Nolte]{alkamperDUNEALUGridModule2015}
M.~Alk{\"a}mper, A.~Dedner, R.~Kl{\"o}fkorn, and M.~Nolte.
\newblock The {{DUNE-ALUGrid Module}}.
\newblock Technical report, arXiv, 08 2015.
\newblock URL \url{http://arxiv.org/abs/1407.6954}.

\bibitem[Bartels(2015)]{bartelsNumericalMethodsNonlinear2015}
S.~Bartels.
\newblock \emph{Numerical {{Methods}} for {{Nonlinear Partial Differential
  Equations}}}, volume~47 of \emph{Springer {{Series}} in {{Computational
  Mathematics}}}.
\newblock Springer International Publishing, 2015.
\newblock ISBN 978-3-319-13796-4 978-3-319-13797-1.
\newblock \doi{10.1007/978-3-319-13797-1}.
\newblock URL \url{http://link.springer.com/10.1007/978-3-319-13797-1}.

\bibitem[Boyer and Fabrie(2013)]{boyer_mathematical_2013}
F.~Boyer and P.~Fabrie.
\newblock \emph{Mathematical Tools for the Study of the Incompressible
  Navier-Stokes Equations and Related Models}, volume 183 of \emph{Applied
  Mathematical Sciences}.
\newblock Springer New York, 2013.
\newblock ISBN 978-1-4614-5974-3 978-1-4614-5975-0.
\newblock \doi{10.1007/978-1-4614-5975-0}.
\newblock URL \url{https://link.springer.com/10.1007/978-1-4614-5975-0}.

\bibitem[Burman and Rappaz(2003)]{burmanExistenceSolutionsAnisotropic2003}
E.~Burman and J.~Rappaz.
\newblock Existence of solutions to an anisotropic phase-field model.
\newblock \emph{Mathematical Methods in the Applied Sciences}, 26\penalty0
  (13):\penalty0 1137--1160, 09 2003.
\newblock ISSN 0170-4214, 1099-1476.
\newblock \doi{10.1002/mma.405}.
\newblock URL \url{https://onlinelibrary.wiley.com/doi/10.1002/mma.405}.

\bibitem[Chen et~al.(2015)Chen, Zhang, Liang, Liu, Qi, Lu, Chen, and
  Chen]{chenModulationDendriticPatterns2015}
L.~Chen, H.~W. Zhang, L.~Y. Liang, Z.~Liu, Y.~Qi, P.~Lu, J.~Chen, and L.-Q.
  Chen.
\newblock Modulation of dendritic patterns during electrodeposition: {{A}}
  nonlinear phase-field model.
\newblock \emph{Journal of Power Sources}, 300:\penalty0 376--385, 2015.
\newblock ISSN 03787753.
\newblock \doi{10.1016/j.jpowsour.2015.09.055}.
\newblock URL
  \url{https://linkinghub.elsevier.com/retrieve/pii/S0378775315303141}.

\bibitem[Dacorogna(2008)]{dacorognaDirectMethodsCalculus2008}
B.~Dacorogna.
\newblock \emph{Direct Methods in the Calculus of Variations}.
\newblock Number v. 78 in Applied Mathematical Sciences. Springer, 2nd ed
  edition, 2008.
\newblock ISBN 978-0-387-35779-9 978-0-387-55249-1.

\bibitem[Dedner and Nolte(2018)]{dednerDunePythonModule2018}
A.~Dedner and M.~Nolte.
\newblock The {{Dune Python Module}}.
\newblock Technical report, arXiv, 07 2018.
\newblock URL \url{http://arxiv.org/abs/1807.05252}.

\bibitem[Elliott and
  Sch{\"a}tzle(1996)]{elliottLimitAnisotropicDoubleobstacle1996}
C.~M. Elliott and R.~Sch{\"a}tzle.
\newblock The limit of the anisotropic double-obstacle {{Allen}}--{{Cahn}}
  equation.
\newblock \emph{Proceedings of the Royal Society of Edinburgh: Section A
  Mathematics}, 126\penalty0 (6):\penalty0 1217--1234, 1996.
\newblock ISSN 0308-2105, 1473-7124.
\newblock \doi{10.1017/S0308210500023374}.
\newblock URL
  \url{https://www.cambridge.org/core/product/identifier/S0308210500023374/type/journal_article}.

\bibitem[Elliott and Sch{\"a}tzle(1997)]{elliottLimitFullyAnisotropic1997}
C.~M. Elliott and R.~Sch{\"a}tzle.
\newblock The {{Limit}} of the {{Fully Anisotropic Double-Obstacle Allen--Cahn
  Equation}} in the {{Nonsmooth Case}}.
\newblock \emph{SIAM Journal on Mathematical Analysis}, 28\penalty0
  (2):\penalty0 274--303, 03 1997.
\newblock ISSN 0036-1410, 1095-7154.
\newblock \doi{10.1137/S0036141095286733}.
\newblock URL \url{http://epubs.siam.org/doi/10.1137/S0036141095286733}.

\bibitem[Evans(1998)]{evansPartialDifferentialEquations1998}
L.~C. Evans.
\newblock \emph{Partial Differential Equations}.
\newblock Number v. 19 in Graduate Studies in Mathematics. American
  Mathematical Society, 1998.
\newblock ISBN 978-0-8218-0772-9.

\bibitem[Gr{\"a}ser(2011)]{graserConvexMinimizationPhase}
C.~Gr{\"a}ser.
\newblock \emph{Convex {{Minimization}} and {{Phase Field Models}}}.
\newblock PhD thesis, Freie Universit{\"a}t Berlin, Berlin, DE, 2011.
\newblock URL
  \url{https://refubium.fu-berlin.de/bitstream/fub188/7385/1/c_graeser_dissertation.pdf}.

\bibitem[Graser et~al.(2013)Graser, Kornhuber, and
  Sack]{graserTimeDiscretizationsAnisotropic2013}
C.~Graser, R.~Kornhuber, and U.~Sack.
\newblock Time discretizations of anisotropic {{Allen-Cahn}} equations.
\newblock \emph{IMA Journal of Numerical Analysis}, 33\penalty0 (4):\penalty0
  1226--1244, 10 2013.
\newblock ISSN 0272-4979, 1464-3642.
\newblock \doi{10.1093/imanum/drs043}.
\newblock URL
  \url{https://academic.oup.com/imajna/article-lookup/doi/10.1093/imanum/drs043}.

\bibitem[Guyer et~al.(2002)Guyer, Boettinger, Warren, and
  McFadden]{guyerModelElectrochemicalDouble2002}
J.~E. Guyer, W.~J. Boettinger, J.~A. Warren, and G.~B. McFadden.
\newblock Model of {{Electrochemical}} "{{Double Layer}}" {{Using}} the {{Phase
  Field Method}}.
\newblock Technical report, arXiv, 03 2002.
\newblock URL \url{http://arxiv.org/abs/cond-mat/0203420}.

\bibitem[Guyer et~al.(2004{\natexlab{a}})Guyer, Boettinger, Warren, and
  McFadden]{guyerPhaseFieldModeling2004}
J.~E. Guyer, W.~J. Boettinger, J.~A. Warren, and G.~B. McFadden.
\newblock Phase field modeling of electrochemistry {{II}}: {{Kinetics}}.
\newblock \emph{Physical Review E}, 69\penalty0 (2):\penalty0 021604, 02
  2004{\natexlab{a}}.
\newblock ISSN 1539-3755, 1550-2376.
\newblock \doi{10.1103/PhysRevE.69.021604}.
\newblock URL \url{http://arxiv.org/abs/cond-mat/0308179}.

\bibitem[Guyer et~al.(2004{\natexlab{b}})Guyer, Boettinger, Warren, and
  McFadden]{guyerPhaseFieldModeling2004a}
J.~E. Guyer, W.~J. Boettinger, J.~A. Warren, and G.~B. McFadden.
\newblock Phase field modeling of electrochemistry {{I}}: {{Equilibrium}}.
\newblock \emph{Physical Review E}, 69\penalty0 (2):\penalty0 021603, 02
  2004{\natexlab{b}}.
\newblock ISSN 1539-3755, 1550-2376.
\newblock \doi{10.1103/PhysRevE.69.021603}.
\newblock URL \url{http://arxiv.org/abs/cond-mat/0308173}.

\bibitem[Karma and Rappel(1998)]{karmaQuantitativePhasefieldModeling1998}
A.~Karma and W.-J. Rappel.
\newblock Quantitative phase-field modeling of dendritic growth in two and
  three dimensions.
\newblock \emph{Physical Review E}, 57\penalty0 (4):\penalty0 4323--4349, 04
  1998.
\newblock ISSN 1063-651X, 1095-3787.
\newblock \doi{10.1103/PhysRevE.57.4323}.
\newblock URL \url{https://link.aps.org/doi/10.1103/PhysRevE.57.4323}.

\bibitem[Kobayashi(1993)]{kobayashiModelingNumericalSimulations1993}
R.~Kobayashi.
\newblock Modeling and numerical simulations of dendritic crystal growth.
\newblock \emph{Physica D: Nonlinear Phenomena}, 63\penalty0 (3-4):\penalty0
  410--423, 03 1993.
\newblock ISSN 01672789.
\newblock \doi{10.1016/0167-2789(93)90120-P}.
\newblock URL
  \url{https://linkinghub.elsevier.com/retrieve/pii/016727899390120P}.

\bibitem[Kobayashi(1994)]{kobayashiNumericalApproachThreeDimensional1994}
R.~Kobayashi.
\newblock A {{Numerical Approach}} to {{Three-Dimensional Dendritic
  Solidification}}.
\newblock \emph{Experimental Mathematics}, 3\penalty0 (1):\penalty0 59--81, 01
  1994.
\newblock ISSN 1058-6458, 1944-950X.
\newblock \doi{10.1080/10586458.1994.10504577}.
\newblock URL
  \url{http://www.tandfonline.com/doi/abs/10.1080/10586458.1994.10504577}.

\bibitem[Liang and Chen(2014)]{liangNonlinearPhaseField2014}
L.~Liang and L.-Q. Chen.
\newblock Nonlinear phase field model for electrodeposition in electrochemical
  systems.
\newblock \emph{Applied Physics Letters}, 105\penalty0 (26):\penalty0 263903,
  12 2014.
\newblock ISSN 0003-6951, 1077-3118.
\newblock \doi{10.1063/1.4905341}.
\newblock URL \url{http://aip.scitation.org/doi/10.1063/1.4905341}.

\bibitem[Liang et~al.(2012)Liang, Qi, Xue, Bhattacharya, Harris, and
  Chen]{liangNonlinearPhasefieldModel2012}
L.~Liang, Y.~Qi, F.~Xue, S.~Bhattacharya, S.~J. Harris, and L.-Q. Chen.
\newblock Nonlinear phase-field model for electrode-electrolyte interface
  evolution.
\newblock \emph{Physical Review E}, 86\penalty0 (5):\penalty0 051609, 11 2012.
\newblock ISSN 1539-3755, 1550-2376.
\newblock \doi{10.1103/PhysRevE.86.051609}.
\newblock URL \url{https://link.aps.org/doi/10.1103/PhysRevE.86.051609}.

\bibitem[McFadden et~al.(1993)McFadden, Wheeler, Braun, Coriell, and
  Sekerka]{mcfaddenPhasefieldModelsAnisotropic1993}
G.~B. McFadden, A.~A. Wheeler, R.~J. Braun, S.~R. Coriell, and R.~F. Sekerka.
\newblock Phase-field models for anisotropic interfaces.
\newblock \emph{Physical Review E}, 48\penalty0 (3):\penalty0 2016--2024, 09
  1993.
\newblock ISSN 1063-651X, 1095-3787.
\newblock \doi{10.1103/PhysRevE.48.2016}.
\newblock URL \url{https://link.aps.org/doi/10.1103/PhysRevE.48.2016}.

\bibitem[Mu et~al.(2019)Mu, Liu, Wen, and
  Liu]{muNumericalSimulationFactors2019}
W.~Mu, X.~Liu, Z.~Wen, and L.~Liu.
\newblock Numerical simulation of the factors affecting the growth of lithium
  dendrites.
\newblock \emph{Journal of Energy Storage}, 26:\penalty0 100921, 12 2019.
\newblock ISSN 2352152X.
\newblock \doi{10.1016/j.est.2019.100921}.
\newblock URL
  \url{https://linkinghub.elsevier.com/retrieve/pii/S2352152X19304268}.

\bibitem[Okajima et~al.(2010)Okajima, Shibuta, and
  Suzuki]{okajimaPhasefieldModelElectrode2010}
Y.~Okajima, Y.~Shibuta, and T.~Suzuki.
\newblock A phase-field model for electrode reactions with
  {{Butler}}--{{Volmer}} kinetics.
\newblock \emph{Computational Materials Science}, 50\penalty0 (1):\penalty0
  118--124, 2010.
\newblock ISSN 09270256.
\newblock \doi{10.1016/j.commatsci.2010.07.015}.
\newblock URL
  \url{https://linkinghub.elsevier.com/retrieve/pii/S0927025610004337}.

\bibitem[Roubi{\v c}ek(2005)]{roubicekNonlinearPartialDifferential2005}
T.~Roubi{\v c}ek.
\newblock \emph{Nonlinear Partial Differential Equations with Applications}.
\newblock Number v. 153 in International Series of Numerical Mathematics.
  Birkh{\"a}user Verlag, 2005.
\newblock ISBN 978-3-7643-7293-4.

\bibitem[Showalter(2013)]{showalterMonotoneOperatorsBanach2013}
R.~Showalter.
\newblock \emph{Monotone {{Operators}} in {{Banach Space}} and {{Nonlinear
  Partial Differential Equations}}}, volume~49 of \emph{Mathematical
  {{Surveys}} and {{Monographs}}}.
\newblock American Mathematical Society, 02 2013.
\newblock ISBN 978-0-8218-9397-5 978-1-4704-1280-7.
\newblock \doi{10.1090/surv/049}.
\newblock URL \url{http://www.ams.org/surv/049}.

\bibitem[Taylor and Cahn(1998)]{taylorDiffuseInterfacesSharp1998}
J.~E. Taylor and J.~W. Cahn.
\newblock Diffuse interfaces with sharp corners and facets: {{Phase}} field
  models with strongly anisotropic surfaces.
\newblock \emph{Physica D: Nonlinear Phenomena}, 112\penalty0 (3-4):\penalty0
  381--411, 02 1998.
\newblock ISSN 01672789.
\newblock \doi{10.1016/S0167-2789(97)00177-2}.
\newblock URL
  \url{https://linkinghub.elsevier.com/retrieve/pii/S0167278997001772}.

\bibitem[Wheeler and McFadden(1996)]{wheelerXivectorFormulationAnisotropic1996}
A.~A. Wheeler and G.~B. McFadden.
\newblock A xi-vector formulation of anisotropic phase-field models: {{3D}}
  asymptotics.
\newblock \emph{European Journal of Applied Mathematics}, 7\penalty0
  (4):\penalty0 367--381, 08 1996.
\newblock ISSN 0956-7925, 1469-4425.
\newblock \doi{10.1017/S0956792500002424}.
\newblock URL
  \url{https://www.cambridge.org/core/product/identifier/S0956792500002424/type/journal_article}.

\bibitem[{Wolfram Research, Inc.}(2024)]{Mathematica}
{Wolfram Research, Inc.}
\newblock {Mathematica}, 2024.
\newblock URL \url{https://www.wolfram.com/mathematica}.

\bibitem[Yurkiv et~al.(2018)Yurkiv, Foroozan, Ramasubramanian,
  Shahbazian-Yassar, and Mashayek]{yurkivPhasefieldModelingSolid2018}
V.~Yurkiv, T.~Foroozan, A.~Ramasubramanian, R.~Shahbazian-Yassar, and
  F.~Mashayek.
\newblock Phase-field modeling of solid electrolyte interface ({{SEI}})
  influence on {{Li}} dendritic behavior.
\newblock \emph{Electrochimica Acta}, 265:\penalty0 609--619, 03 2018.
\newblock ISSN 00134686.
\newblock \doi{10.1016/j.electacta.2018.01.212}.
\newblock URL
  \url{https://linkinghub.elsevier.com/retrieve/pii/S0013468618302809}.

\end{thebibliography}

\end{document}